\documentclass[10pt,a4paper]{article}
\usepackage[latin1]{inputenc}
\usepackage[T1]{fontenc}
\usepackage{amssymb,amsthm,amsmath,amsfonts,mathrsfs}
\usepackage{hyperref}
\usepackage{makeidx}
\usepackage{graphicx}
\usepackage[english]{babel}
\usepackage{color}
\usepackage[width=16.4cm, height=24.5cm]{geometry}

\newtheorem{prop}{Proposition}[section]
\newtheorem{lem}{Lemma}[section]
\newtheorem{coro}{Corollary}[section]
\newtheorem{thm}{Theorem}[section]
\newtheorem{remark}{Remark}[section]
\newtheorem{example}{Example}[section]
\numberwithin{equation}{section}

\newcommand{\thesectionwords}{\ifcase \thechapter \fi}

\usepackage{fancyhdr}

\newcommand{\be}{\begin{equation} \label}
\newcommand{\ee}{\end{equation}}
\newcommand{\R}{\mathbb{R}}
\newcommand{\N}{\mathbb{N}}
\newcommand{\eps}{\varepsilon}

\fancyhf{}
\lhead{\thepage}

\begin{document}
	\begin{center}
	
		\section*{Refined behavior and structural universality \\ 
		of the blow-up profile for the semilinear heat equation \\ 
		with non scale invariant nonlinearity}
	
		Loth Damagui CHABI and Philippe SOUPLET
		\footnote{Universit\'e Sorbonne Paris Nord \& CNRS UMR 7539, Laboratoire Analyse G\'eom\'etrie et Applications, 93430 Villetaneuse,  France.
	Emails: chabi@math.univ-paris13.fr; souplet@math.univ-paris13.fr}
\end{center}

\begin{abstract}
	We consider the semilinear heat equation 
$$
	u_t-\Delta u=f(u) 
$$
	for a large class of non scale invariant nonlinearities of the form 
	$f(u)=u^pL(u)$, where $p>1$ is Sobolev subcritical and $L$ is a slowly varying function
	(which includes for instance logarithms and their powers and iterates, as well as some strongly oscillating functions).
	For any positive radial decreasing blow-up solution,
	we obtain the sharp, global blow-up profile in the scale of the original variables $(x, t)$, which takes the form:
 $$
	u(x,t)=(1+o(1))\,G^{-1}\bigg(T-t+\frac{p-1}{8p}\frac{|x|^2}{|\log |x||}\bigg),
		\ \hbox{as $(x,t)\to (0,T)$, \quad where } G(X)=\int_{X}^{\infty}\frac{ ds}{f(s)}.
$$
This estimate in particular provides the sharp final space profile and the refined space-time profile.
 As a remarkable fact and completely new observation, 
our results reveal a {\it structural universality} of the global blow-up profile, 
being given by the ``resolvent'' $G^{-1}$ of the ODE, composed with a universal,
time-space building block, which is the same as in the pure power case.

\vskip 6pt

\noindent{\bf Key words:} Semilinear heat equation, blow-up profile, refined space-time behavior, slow variation.
\vskip 3pt

\noindent{\bf AMS subject classifications:} 35K58, 35B44, 35B40

\end{abstract}

\section{Introduction}
We consider the semilinear heat equation with general nonlinearity $f$:
	\begin{equation}
	\begin{cases}
	u_t-\Delta u=f(u),&x\in \Omega,\ t>0,\\
	u=0,&x\in \partial\Omega,\ t>0,\\
	u(x,0)=u_0(x),&x \in \Omega,
	\end{cases},\label{eqE1}
	\end{equation}
	where $\Omega \subset \mathbb{R}^n$ $(n\ge 1)$ is smooth and $f\in C^1([0,\infty))$ 
	satisfies $f(0)\ge 0$ and $f(s)>0$ for $s$ large.
	It is well known that, for any $u_0\in L^\infty(\Omega)$, $u_0\ge 0$, problem \eqref{eqE1} admits a unique, nonnegative maximal classical solution.
	Throughout this paper, we shall denote this solution by $u$ 
and its maximal existence time by $T:=T_{\max} (u_0)\in (0,\infty]$.
If $f$ has superlinear growth in the sense that $1/f$ is integrable at infinity then
(see,~e.g.,~\cite[Section~17]{quittner2019superlinear}),
under suitable largeness condition on the initial data, 
$u$ blows up in finite time, i.e.~$T < \infty$ and
	\begin{equation*}
	\lim_{t\to T} \|u(t)\|_\infty=\infty.
	\end{equation*}

In the classical case of the pure power nonlinearity $f(s)=s^p$ ($p>1$), 
very precise information on the asymptotic
behavior of solutions near blow-up points is available.
 In particular the sharp final blow-up profiles 
and the corresponding refined space-time behaviors have been completely classified
 in the Sobolev subcritical range $1<p<p_S$, where 
$$p_S=
\begin{cases}
\frac{n+2}{n-2},&\hbox{ if $n\ge 3$} \\
\noalign{\vskip 1mm}
\infty,&\hbox{ if $n\le 2$}
\end{cases}
$$
(cf.~\cite{filippas1992refined, herrero1992blow, HV93, Vel92, Vel93b, BK94, 
merle1998optimal, merle1998refined, souplet2019simplified} 
and see Remark~\ref{rem1b} 
 below for more details).
However, the analysis in these works heavily depends on the scale invariance
properties of the equation for this specific nonlinearity.
 As already noted in \cite[Section~5.3]{BB}, 
the precise asymptotic blow-up behavior for general nonlinearities 
is still a widely open problem
(see Remark~\ref{remRelated} below for related results).
The main goal of this paper is to partially fill this gap by providing a very precise description
of the refined blow-up behavior, in the case of radially symmetric decreasing solutions, for a large class of non scale invariant nonlinearities.

\section{Main results}

\subsection{Statements of main results}

For $p\in\R$, recall that a function $f$, positive near $\infty$, is said to have regular variation of index $p$ at $\infty$ if $L(s):=s^{-p}f(s)$ satisfies
	\begin{equation}\label{prop}
	\lim_{\lambda\to\infty}\frac{L(\lambda s)}{L(\lambda)}=1\quad \text{for each }\ s>0.
	\end{equation}
The corresponding function $L$ is said to have slow variation at $\infty$.
Functions of regular and slow variation, introduced in \cite{karamata1930mode}, appear in various fields 
(see \cite{seneta} for a general reference and, e.g,~\cite{bingham1989regular} and 
\cite{cirstea, souplet2022universal} 
for applications in probability theory and PDE, respectively).
 When $L$ is $C^1$ near infinity, a well-known sufficient condition for \eqref{prop} is $\lim_{s\to\infty} s\frac{L'(s)}{L(s)}=0$.
 
 Assume that
  \begin{equation}\label{sem0}
 f\in C^1([0,\infty)),\quad f(0)\ge 0, \quad\hbox{ $f$ is positive and $C^2$ for large $s$.} 
  \end{equation}
  For our most precise results, we shall consider the following subclass of 
 regularly varying functions with index $p>1$:
 \begin{equation}\label{sem}
f(s)=s^pL(s), \ \hbox{ where } 
\frac{sL'}{L}=o\Bigl(\frac{1}{\log^\alpha s}\Bigr)
\, \hbox{ and }\, 
\Bigl(\frac{sL'}{L}\Bigr)'=o\Bigl(\frac{1}{s\log s}\Bigr)
\hbox{ as $s\to\infty$, \, for some $\alpha>\frac{1}{2}$.}
 \end{equation}
Such $L$ include logarithms and their powers and iterates, but also functions with intermediate 
growth between logarithms and power functions, as well as some strongly oscillating functions; 
 see Example~\ref{ex1} below for details.

In view of our results we also introduce the function
\begin{equation}\label{so}
G(X)=\int_X^\infty \frac{ ds}{f(s)}.
\end{equation} 
It is easily checked that, under assumptions \eqref{sem0}-\eqref{sem} with $p>1$, $G$  is well defined and is a decreasing bijection from $[a,\infty)$ to $(0,G(a)]$ for some $a>0$.
 Note that the function 
 $$\psi(t):=G^{-1}(T-t),$$ 
 defined for $t$ close to $T^-$, is 
  the unique positive solution of the corresponding ODE 
$\psi'=f(\psi)$ 
which blows up at $t=T$.
  
  Our first main result is the following sharp, global blow-up profile,
  valid in the scale of the original variables $(x, t)$.

 	\smallskip
	
\begin{thm}\label{theo}
	Let $1<p<p_S$, $\Omega=\mathbb{R}^n$ or $\Omega=B_R$, and assume \eqref{sem0}-\eqref{sem}.
	Let $u_0\ge 0$ be radially symmetric, nonincreasing in $|x|$, and assume $T<\infty$.
		If $\Omega=\mathbb{R}^n$ assume also that $u_0$ is nonconstant.
	Then we have
\begin{equation}\label{infer1}
u(x,t)=(1+o(1)) G^{-1}\bigg(T-t+\frac{p-1}{8p}\frac{|x|^2}{|\log|x||}\bigg)\quad \text{as}\ (x,t)\to (0,T).
\end{equation} 
\end{thm}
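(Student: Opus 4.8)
The plan is to reduce the problem to the already-understood pure power case by a suitable change of unknown, and then transfer the known refined profile for $u^p$ back through the ODE resolvent $G^{-1}$. First I would introduce the ``ODE normalization'' $w := \Phi(u)$, where $\Phi$ is chosen so that $w$ satisfies (up to lower order terms) the pure power heat equation $w_t - \Delta w = w^p$; concretely, one wants $\Phi'(s) f(s) = \Phi(s)^p$, which after integration relates $\Phi$ to $G$ and forces $\Phi(s) \sim \big(\tfrac{1}{(p-1)G(s)}\big)^{1/(p-1)}$ as $s\to\infty$ (this is exactly the asymptotics of $G^{-1}$ inverted). The error terms generated by $\Delta$ acting through the nonlinear change of variables will involve $\Phi''/\Phi'$ and $(\Phi')^2$, and the whole point of the structural hypotheses \eqref{sem} — the two smallness conditions on $sL'/L$ and its derivative with the $\log^\alpha$ and $s\log s$ gains — is precisely to guarantee that these errors are of lower order than the terms retained in the known expansion for the power case, \emph{uniformly on the relevant parabolic region} near $(0,T)$.

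Next, I would record the classical sharp space-time profile for the transformed equation. For radial nonincreasing blow-up solutions of $w_t-\Delta w = w^p$ with $1<p<p_S$, the results of Herrero–Velázquez, Merle–Zaag, and the simplified treatment in \cite{souplet2019simplified} give
\[
w(x,t) = (1+o(1)) \, \Big[(p-1)\Big(T-t+\tfrac{p-1}{8p}\tfrac{|x|^2}{|\log|x||}\Big)\Big]^{-1/(p-1)}
\quad\text{as } (x,t)\to(0,T),
\]
which is the universal building block alluded to in the abstract. The bulk of the work is to show that $\Phi(u)$ satisfies an equation close enough to the pure power one that this profile persists for $w=\Phi(u)$; here I would use comparison arguments (sub/supersolutions built by perturbing the power profile) together with the a priori information that $u$, being radial nonincreasing and blowing up, has its single blow-up point at the origin and obeys standard type I / $\|u(t)\|_\infty \sim \psi(t) = G^{-1}(T-t)$ bounds, which follow from the ODE comparison $\psi' = f(\psi)$ and the theory for regularly varying nonlinearities (these should be available from the earlier sections, via \eqref{so} and the properties of $G$).

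Finally I would invert the change of variables: from $w = \Phi(u)$ and $\Phi = G^{-1}$ inverted (i.e. $u = \Phi^{-1}(w)$ with $\Phi^{-1}(y) = G^{-1}\big(\tfrac{1}{(p-1)y^{p-1}}(1+o(1))\big)$), plug in the profile for $w$ and use the slow variation of $G^{-1}$ — itself regularly varying, which makes it ``transparent'' to the $1+o(1)$ factor — to land on \eqref{infer1}. The slow-variation bookkeeping here is where one must be careful: one needs $G^{-1}\big((1+o(1))Y\big) = (1+o(1))G^{-1}(Y)$ as $Y\to 0^+$, which holds because $f$ is regularly varying of index $p>1$ so $G^{-1}$ is regularly varying of index $-1/(p-1)$ at $0^+$; the $o(1)$ inside must be shown uniform in $x$ over the region $|x|^2/|\log|x|| \lesssim T-t$, matching the scale on which the building block is nontrivial.

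The main obstacle I expect is the uniform control of the error terms in the transformed equation over the full parabolic neighborhood of $(0,T)$ — in particular near the ``edge'' of the blow-up region where $|x|$ is small but not negligible compared to $\sqrt{(T-t)|\log|x||}$ — since there $w$ is large but not as large as at the origin, and the change-of-variable errors, governed by $sL'/L$ evaluated at $u \sim G^{-1}(\cdot)$, must be dominated by the genuinely delicate $|\log|x||^{-1}$ correction in the power-case profile rather than merely by the leading term. This is exactly what the exponent $\alpha > 1/2$ in \eqref{sem} is calibrated to handle, and making that matching rigorous — presumably via carefully tuned sub- and supersolutions and a bootstrap on the size of $u$ — is the technical heart of the argument.
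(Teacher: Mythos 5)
Your proposal takes a genuinely different route from the paper, but it contains a gap that I do not think can be closed in the way you sketch. The change of unknown $w=\Phi(u)$ with $\Phi'(s)f(s)=\Phi(s)^p$, i.e.\ $\Phi(s)=\big((p-1)G(s)\big)^{-1/(p-1)}$, does not reduce the problem to a semilinear perturbation of the pure power equation: one finds
\[
w_t-\Delta w \;=\; w^p \;-\; \Phi''(u)\,|\nabla u|^2,
\]
so the error term is \emph{quasilinear}. There is no off-the-shelf refined-profile theorem for $w_t-\Delta w=w^p+E$ with such a gradient-dependent $E$; the Herrero--Vel\'azquez/Merle--Zaag/\cite{souplet2019simplified} results use the exact structure of the pure power equation (in particular a rescaling and a center-manifold analysis that degrade under generic perturbations), and cannot simply be ``transferred.'' Moreover, controlling $\Phi''(u)|\nabla u|^2$ relative to the $|\log|x||^{-1}$ correction you want to establish requires precisely the refined gradient information that the profile encodes, which is circular as stated. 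A rough size count is also unfavorable: under \eqref{sem}, $\Phi''(u)$ carries a factor of order $sL'/L=o(\log^{-\alpha}u)$ with only $\alpha>1/2$, and $\log u\sim \beta|\log(T-t)|$, so this factor alone is not $o(|\log(T-t)|^{-1})$; you would need $|\nabla u|^2$ to contribute the remaining smallness, which is exactly the part you have not proved. The $\alpha>1/2$ threshold in \eqref{sem} is actually used in the paper for a different purpose (Lemma~\ref{ell12a}(ii), controlling the slow variation of $L$ along $G^{-1}$), not to absorb a change-of-variable error of the type you introduce.

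For contrast, the paper's proof of Theorem~\ref{theo} proceeds directly on the original equation and splits sharply into an upper and a lower bound. The upper bound (Section 4) comes from a Friedman--McLeod type maximum-principle argument applied to the functional $J=u_r+\frac{rf(u)}{2(A+\log f(u))}$ on a parabolic subdomain where $u$ is large, then integrating the resulting differential inequality and converting $H^{-1}$ to $G^{-1}$ via Lemma~\ref{lm1}. The lower bound (Section 5) is obtained by propagating the refined backward-parabola asymptotics of Theorem~\ref{Thm1} forward in time, via comparison with the subsolution $U=G^{-1}\big(G(V)+t_0-t\big)$, $V=M+S_\delta(t-t_0)(u(t_0)-M)$, using Lemma~\ref{lemSrho} to control the heat-semigroup action. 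Theorem~\ref{Thm1} itself is proved (Section 6) by rescaling $\tilde u=\psi(t)w(y,s)$ in similarity variables with ODE normalization $\psi=G^{-1}(T-t)$ and redoing the linearization and center-manifold projection from scratch on the resulting perturbed equation, where the perturbations coming from the slow-variation factor $L$ are estimated via Lemma~\ref{lm1b} and shown to be $o(1/s)$. If you want to salvage your plan, you would essentially have to re-prove the center-manifold analysis for the perturbed equation in $w$-variables, at which point the change of unknown $\Phi$ buys nothing over the paper's $\psi(t)$-normalization while introducing the awkward $|\nabla u|^2$ term.
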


 	\smallskip
	
 By combining \eqref{infer1} with the available asymptotic formula for $G^{-1}$,
one obtains the explicit blow-up asymptotics of $u$, which shows the 
precise influence of the multiplicative factor $L$ on the profile; see after Example~\ref{ex1}.

On the other hand, under the assumption of Theorem~\ref{theo}, $u\geq0$ is radially symmetric decreasing in $|x|$ for all $t\in (0,T)$ and 
\eqref{infer1} in particular implies that $0$ is the only blow-up point of $u$.
By standard parabolic estimates, it follows that $u$ possesses a final profile, i.e.~the limit $u(x,T):=\lim_{t\to T}u(x,t)$ exists for all $x\ne 0$.
As direct consequence of Theorem~\ref{theo}, 
we obtain a sharp description of the final profile and the refined space-time profile:

\begin{coro}\label{Debo}
	Under the assumptions of Theorem~\ref{theo}, we have 
	
		\begin{itemize}
			\item [(i)]\textbf{(final profile)} \begin{equation}\label{fp}
			u(x,T)=(1+o(1)) G^{-1}\bigg(\frac{(p-1)|x|^2}{8p|\log|x||}\bigg)   \quad\text{as }\ x\to 0;
			\end{equation}
			\item [(ii)] \textbf{(refined space-time profile)}
			\begin{equation}\label{lou}
			u\bigl(\xi\sqrt{(T-t)|\log(T-t)|},t\bigr)= (1+o(1))G^{-1}\bigg[(T-t)\bigg(1+\frac{(p-1)|\xi|^2}{4p}\bigg)\bigg] 
			\quad\text{as }\ t\to T,
			\end{equation}
			uniformly for $\xi$ bounded.
		\end{itemize}
\end{coro}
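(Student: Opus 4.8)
The corollary is a direct consequence of Theorem~\ref{theo}: both statements are obtained by elementary manipulations of the right-hand side of \eqref{infer1}, using the continuity of $u$ up to $t=T$ away from the origin for part~(i), and an asymptotic simplification of the argument of $G^{-1}$ for part~(ii). For (i), fix $x\neq0$. Since \eqref{infer1} forces $0$ to be the only blow-up point, $u$ is bounded near $(x,T)$ and $u(x,T)=\lim_{t\to T}u(x,t)$ exists. The map $t\mapsto T-t+\frac{p-1}{8p}\frac{|x|^2}{|\log|x||}$ is continuous and, for $|x|$ small, takes values in $(0,G(a)]$, tending to $\frac{(p-1)|x|^2}{8p|\log|x||}$ as $t\to T$; since $G^{-1}$ is continuous there, letting $t\to T$ for each fixed small $x\neq0$ in \eqref{infer1} and passing the $o(1)$-estimate to the limit yields \eqref{fp}.

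For (ii), write $x=x(\xi,t):=\xi\sqrt{(T-t)|\log(T-t)|}$ with $|\xi|\le M$. As $t\to T$ we have $x\to0$ uniformly in such $\xi$, so \eqref{infer1} applies with a $o(1)$ that is uniform for $|\xi|\le M$. From $|x|^2=|\xi|^2(T-t)|\log(T-t)|$ we get $\log|x|=\tfrac12\log(T-t)+\tfrac12\log|\log(T-t)|+\log|\xi|$, hence $|\log|x||=\tfrac12|\log(T-t)|(1+o(1))$ uniformly on any annulus $0<\delta\le|\xi|\le M$, and therefore
$$\frac{|x|^2}{|\log|x||}=\frac{|\xi|^2(T-t)|\log(T-t)|}{|\log|x||}=2|\xi|^2(T-t)+o(T-t)$$
uniformly on that annulus. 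On the remaining region $|\xi|\le\delta$ one has the crude bound $0\le\frac{|x|^2}{|\log|x||}\le C\delta^2(T-t)$ for $t$ close to $T$, so after letting $\delta\to0$ the displayed identity holds uniformly for all $|\xi|\le M$. Substituting into the argument of $G^{-1}$ in \eqref{infer1} gives
$$T-t+\frac{p-1}{8p}\frac{|x|^2}{|\log|x||}=(T-t)\Big(1+\frac{(p-1)|\xi|^2}{4p}\Big)(1+o(1)),\qquad\text{uniformly for }|\xi|\le M.$$

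It then remains to replace this argument by the clean one $s:=(T-t)(1+\frac{(p-1)|\xi|^2}{4p})$. For this I would use that $G^{-1}$ is regularly varying at $0^+$, of index $-\frac{1}{p-1}$: indeed $1/f$ is regularly varying of index $-p<-1$, so by Karamata's theorem $G(X)\sim\frac{X}{(p-1)f(X)}$ is regularly varying of index $1-p$ at $\infty$, and the inverse of a monotone regularly varying function of nonzero index is regularly varying of the reciprocal index. The uniform convergence theorem then gives $G^{-1}(\lambda s)/G^{-1}(s)\to1$ as $s\to0^+$, uniformly for $\lambda$ in a compact subset of $(0,\infty)$; applying this with $\lambda=1+o(1)$ and combining with the uniform $o(1)$ from \eqref{infer1} yields \eqref{lou}. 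The only genuinely delicate points are the uniformity of the error near $\xi=0$ (handled by the crude bound on $|\xi|\le\delta$ together with a routine double-limit argument) and the regular variation of $G^{-1}$ (a standard but essential fact); everything else is bookkeeping around Theorem~\ref{theo}.
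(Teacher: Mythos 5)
Your proposal is correct and follows essentially the same route as the paper: part (i) by letting $t\to T$ in \eqref{infer1}, and part (ii) by showing that the argument of $G^{-1}$ in \eqref{infer1} equals $(T-t)\bigl(1+\frac{(p-1)|\xi|^2}{4p}\bigr)(1+o(1))$ uniformly for bounded $\xi$ (splitting the regions near and away from $\xi=0$, exactly as in the paper's proof) and then using the insensitivity of $G^{-1}$ to multiplicative $1+o(1)$ perturbations, which the paper packages as \eqref{equivGG} in Lemma~\ref{lm1}(ii) and you re-derive via Karamata theory. One small slip: the uniform convergence theorem gives $G^{-1}(\lambda s)/G^{-1}(s)\to\lambda^{-1/(p-1)}$ (not $1$) uniformly on compacts of $(0,\infty)$, but since you only apply it with $\lambda=1+o(1)$ the conclusion you use is still valid.
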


 When rescricted to backward space-time parabolas, \eqref{lou} in particular contains the fact that 
			\begin{equation}\label{ODEbu}
			\lim_{t\to T} \frac{u(y\sqrt{T-t},t)}{G^{-1}(T-t)}=1, \hbox{ uniformly for $y$ bounded.}
			\end{equation}
This property, which is the extension of the classical 
Giga-Kohn result \cite{giga1985asymptotically,giga1989nondegeneracy}, was recently obtained by the first author in \cite{chabi1}
for general blow-up solutions (without radial symmetry) and a larger class of regularly varying nonlinearities
 (see Theorem~\ref{RDT} in appendix below).
Here, as a second main result in radially symmetric framework, 
we obtain the following theorem, which provides the precise rate of convergence to ODE solution 
in backward space-time parabolas and does not directly follow from Theorem~\ref{theo}.

\begin{thm}\label{Thm1}
	Under the assumptions of Theorem~\ref{theo}, we have 	
	\begin{equation}\label{A1}
	\frac{u(y\sqrt{T-t},t)}{G^{-1}(T-t)}=1-\frac{|y|^2-2n+o(1)}{4p|\log(T-t)|}\quad \text{as} \ t\to T, 
	\end{equation}
	with convergence in $H^1_\rho$ and uniformly for $y$ bounded,
	 where $\rho(y)=e^{-|y|^2/4}$, $\|v\|^2_{H^1_\rho}=\int_{\R^n}(|\nabla v|^2+v^2)\rho dy$
	and $u$ is extended to be $0$ outside $\Omega$ in the case $\Omega=B_R$.
	\end{thm}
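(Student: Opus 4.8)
\medskip
\noindent\emph{Sketch of proof.}
The plan is to pass to self-similar variables, but working with the transformed unknown $z:=G(u)$ rather than with $u$ itself, since $z$ obeys an equation with only a gradient-quadratic nonlinearity and, crucially, a linearized operator free of non-autonomous terms. From $G'=-1/f$ and \eqref{eqE1} one computes $z_t-\Delta z=-1-f'(u)\,|\nabla z|^2$. Setting $z(x,t)=(T-t)\,Z(y,s)$ with $y=x/\sqrt{T-t}$ and $s=-\log(T-t)$ gives
\[
Z_s=\mathcal{L}Z-1-(T-t)\,f'\!\bigl(G^{-1}((T-t)Z)\bigr)\,|\nabla Z|^2,\qquad \mathcal{L}:=\Delta-\tfrac12\,y\cdot\nabla+1,
\]
and $Z\equiv1$ is an exact solution (it corresponds to $u=\psi$). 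By \eqref{ODEbu} (the Giga--Kohn type result of \cite{chabi1}), combined with the regular variation of $G$ and $G^{-1}$, one has $Z(\cdot,s)\to1$ as $s\to\infty$ in $C^2_{\mathrm{loc}}$ and, by standard energy estimates, in $H^1_\rho$; in the case $\Omega=B_R$ the rescaled domain $\{|y|<Re^{s/2}\}$ exhausts $\R^n$ and its boundary is asymptotically invisible against the Gaussian weight. A Karamata-type estimate built on the first bound in \eqref{sem} moreover yields $(T-t)f'(G^{-1}((T-t)Z))=\tfrac{p}{(p-1)Z}\,(1+o(s^{-\alpha}))$, uniformly for $Z$ near $1$.

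Linearizing, i.e.\ setting $\zeta:=Z-1\to0$, one obtains
\[
\zeta_s=\mathcal{L}\zeta-\frac{p}{p-1}\,|\nabla\zeta|^2+\mathcal{R},\qquad \mathcal{R}=O\bigl(\zeta|\nabla\zeta|^2\bigr)+o(s^{-\alpha})\,|\nabla\zeta|^2 .
\]
The crucial structural point is that the nonlinearity has no linear part and the linearized operator is exactly $\mathcal{L}$, the same as in the pure power case $f(u)=u^p$; the entire effect of the slowly varying factor $L$ is pushed into the lower-order remainder $\mathcal{R}$, which is the mechanism behind the structural universality. (Had one instead normalized $u$ by $\psi=G^{-1}(T-t)$, the linearized equation would carry a non-autonomous potential of size $o(s^{-\alpha})$, which the substitution $z=G(u)$ precisely removes.)

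Now I run the classical refined-asymptotics scheme for $\mathcal{L}$ on $L^2_\rho$. This operator is self-adjoint with eigenvalues $1-k/2$ $(k\ge0)$ and Hermite eigenfunctions; by radial symmetry all odd modes of $\zeta$ vanish, so its unique unstable mode is the constant one $(k=0,$ eigenvalue $1)$, the neutral eigenspace $(k=2,$ eigenvalue $0)$ is spanned among radial functions by $\Phi(y):=|y|^2-2n$, and the remaining modes $(k\ge3)$ are stable. Decompose $\zeta=\zeta_++a(s)\Phi+\zeta_-$. One then (i) shows the unstable part is negligible, $\zeta_+=o(|a|)$ --- the delicate classical step, handled as in \cite{filippas1992refined,souplet2019simplified} using the global bound on $u/\psi$ (from \cite{giga1985asymptotically,chabi1}), the convergence $\zeta\to0$, and nonconstancy of $u_0$ to exclude $\zeta\equiv0$; (ii) slaves the stable part, $\|\zeta_-\|_{H^1_\rho}=O(a^2)$, via parabolic smoothing and the spectral gap; (iii) projects the equation onto $\Phi$: since $\nabla\zeta\approx 2a\,y$ and $\langle|\nabla\Phi|^2,\Phi\rangle_\rho=4\|\Phi\|^2_\rho$ by a Gaussian moment computation, while $\langle\mathcal{R},\Phi\rangle_\rho=o(a^2)$, the reduced ODE is
\[
a'(s)=-\frac{4p}{p-1}\,a(s)^2+o\bigl(a(s)^2\bigr),
\]
which together with $a(s)\to0$ forces $a(s)>0$ for $s$ large and $a(s)=\dfrac{p-1}{4ps}\,(1+o(1))$. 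Hence $\zeta=\dfrac{(p-1)(|y|^2-2n)}{4ps}\,(1+o(1))$ in $H^1_\rho$, and interior parabolic estimates for the $\zeta$-equation upgrade this to uniform convergence on compact $y$-sets.

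Finally, translating back: since $G^{-1}$ is regularly varying of index $-1/(p-1)$ at $0^+$, with a quantitative rate provided by \eqref{sem} and using $\zeta=O(1/s)$,
\[
\frac{u(y\sqrt{T-t},t)}{G^{-1}(T-t)}=\frac{G^{-1}\!\bigl((T-t)(1+\zeta)\bigr)}{G^{-1}(T-t)}=(1+\zeta)^{-1/(p-1)}+o(1/s)=1-\frac{|y|^2-2n}{4ps}+o(1/s),
\]
which is \eqref{A1} with $s=|\log(T-t)|$, the $H^1_\rho$ statement following from the corresponding bound on $\zeta$. I expect the main obstacle to be step (iii), i.e.\ verifying that, despite non-scale invariance, the reduced dynamics on the center manifold is governed by exactly the pure-power autonomous ODE and that every correction is genuinely $o(a^2)$; this, together with the uniform Karamata control of the nonlinearity, is where the quantitative hypotheses \eqref{sem} enter, the threshold $\alpha>\tfrac12$ ensuring square-summability in $s$ of the $o(s^{-\alpha})$ error terms appearing in the $H^1_\rho$ energy estimates. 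The elimination of the $k=0$ mode in (i) is the other, classical, difficulty, transplanted here to the present radial and mildly non-autonomous setting.
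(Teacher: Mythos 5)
Your proposal takes a genuinely different route from the paper. The paper rescales $\tilde u(x,t)=\psi(t)w(y,s)$ with $\psi=G^{-1}(T-t)$ and linearizes around $w=1$; this produces a non-autonomous linear potential $\tau(s)\varphi$ in the $\varphi$-equation (cf.\ \eqref{(5.11)}-\eqref{defWT}), and a substantial part of the work --- including the refined expansion in Lemma~\ref{lm1b}, which is where the second hypothesis in \eqref{sem} enters --- is devoted to showing $\tau(s)=o(1/s)$ so that this potential does not pollute the neutral-mode dynamics. Your substitution $z=G(u)$, $z=(T-t)Z(y,s)$, $\zeta=Z-1$ is designed precisely so that $Z\equiv 1$ is an \emph{exact} solution and the linearized operator is exactly $\mathcal{L}$ with no lower-order linear term at all; the entire effect of $L$ is pushed into a genuinely nonlinear remainder $\mathcal{R}$. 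That is a cleaner structural explanation of the universality phenomenon (Remark~\ref{rem1}), and the constant bookkeeping ($\langle|\nabla\Phi|^2,\Phi\rangle_\rho=4\|\Phi\|^2$, $a'=-\tfrac{4p}{p-1}a^2+o(a^2)$, the $(1+\zeta)^{-1/(p-1)}$ back-translation) all checks out.

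There are, however, two genuine gaps. First, the transformed unknown is not bounded. In the paper one has $0\le w\le M_0$, hence $\varphi=1-w\in[1-M_0,1]$, and this uniform bound is used repeatedly (boundedness of $F(\varphi,s)$, $\partial_\xi F$, the cubic/quartic moment estimates in Steps~3--5). In contrast, $Z=G(u)/(T-t)$ is only close to $1$ where $u$ is comparable to $\psi(t)$; for $|y|$ of order $e^{s/2}$ the solution $u$ is $O(1)$ or $0$, so $Z$ is of order $e^{s}$ or is simply undefined (if $\Omega=B_R$, $u$ vanishes on the boundary and $G(u)$ makes no sense, since $G$ is only defined on $[a,\infty)$). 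The remark that the boundary is ``asymptotically invisible against the Gaussian weight'' does not resolve this: one still has to make sense of the equation and establish $\zeta(\cdot,s)\to 0$ in $H^1_\rho$, which requires a pointwise growth bound on $\zeta$ and a cut-off mechanism analogous to the $\phi$-truncation and the $B(y,s)$ error term in \eqref{defcutoff}-\eqref{suppB} --- but the cut-off cannot be applied to $u$ before composing with $G$, since $G$ is undefined near $0$. Some construction (e.g.\ modifying $f$, hence $G$, on a bounded interval, or working with $\min(G(u),C)$) is needed and has to be shown harmless; this is not a formality.

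Second, the mechanism that pins the neutral mode to the $\sim 1/s$ rate is misattributed. Nonconstancy of $u_0$ and $\zeta\to0$ by themselves do not exclude $a(s)$ decaying faster than any polynomial (the center manifold analysis alone leaves the degenerate, exponentially-decaying branch open; this is precisely what Filippas--Kohn and Herrero--Vel\'azquez had to classify separately). In the present paper, as in \cite{souplet2019simplified}, the exclusion comes from a quantitative \emph{lower} bound $\|\varphi(\cdot,s)\|\ge cs^{-1}$ (Lemma~\ref{lem}), which is extracted from the maximum-principle inequality \eqref{PJcontrad0} proved in the course of establishing the upper profile estimate (Theorem~\ref{theoL}). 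Your proof has to invoke this --- or an equivalent --- explicitly: it is an input from Section~4, not a consequence of the rescaled dynamics, and without it step~(iii) of your scheme does not determine the sign or the rate of $a(s)$.
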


\subsection{Discussion and examples}

\begin{remark} \label{rem1}
 {\bf Structural universality of the profile.}
 As a remarkable fact and completely new observation, 
we see from Theorem~\ref{theo} that the global blow-up profile \eqref{infer1} 
 possesses a universal structure, which is to a large extent independent of the nonlinearity.
Namely it is given by the ``resolvent'' $G^{-1}$ of the ODE, composed with a universal,
time-space building block 
	\begin{equation}\label{block}
	T-t+\frac{p-1}{8p}\frac{|x|^2}{|\log|x||}
	\end{equation}
which is the same as in the pure power case.
In particular, the building block of the profile is not affected by the slowly varying factor $L$ of the nonlinearity
and, at principal order, the effect of the latter on the profile is reflected only in the application of the resolvent $G^{-1}$ to this building block.
A similar observation applies for the final and refined space-time profiles and
for the sharp behavior in backward space-time parabolas
(cf.~\eqref{fp}-\eqref{A1}). 
\end{remark}

\begin{example} \label{ex1}
Typical examples of nonlinearities fulfilling assumptions \eqref{sem0}-\eqref{sem},
so that Theorems~\ref{theo}-\ref{Thm1} and Corollary~\ref{Debo} apply,
 are $f(s)=s^pL(s)$ with $L$ given by~:
\be{examplesSlowVar}
\left\{\begin{aligned}
&\hbox{\ $\bullet$\  $\log^a (K+s)$ for $K>1$ and $a\in\R$,} \\
\noalign{\vskip 1mm}
&\hbox{\ $\bullet$\  the iterated logarithms {$\log_m(K+s)$,\footnotemark}} \\
 \noalign{\vskip 1mm}
 &\hbox{\ $\bullet$\  $\exp(|\log s|^\nu)$ with $\nu\in (0,1/2)$,} \\
 \noalign{\vskip 1mm}
 &\hbox{\ $\bullet$\  the strongly oscillating functions} \\
 &\hbox{\qquad $\bigl[\log(3+s)\bigr]^{\sin[\log\log(3+s)]}
\quad\hbox{ and }\quad
\exp\bigl[|\log s|^\nu\cos(|\log s|^\gamma)\bigr],\ \ \nu, \gamma>0,\ \nu+\gamma<1/2$,} \\
\noalign{\vskip 1mm}
 &\hbox{\ $\bullet$\  $1 +a \sin\bigl(\log^\nu(2+s)\bigr)$ with $ \nu\in (0,1/2)$ and $|a|<1$.} \\
\end{aligned}
\right.
\ee
Our results thus cover a large class of non scale invariant nonlinearities.
However, it so far remains an open problem what is the largest possible class of $f$
for which the conclusions of Theorems~\ref{theo}-\ref{Thm1} hold.
\end{example}

\footnotetext{where $\log_m=\log\circ\dots\circ\log$ ($m$ times), $m\in\N^*$
and $K>[\exp\circ\dots\circ\exp](0)$.}

 Under the assumptions on $f$ in Theorem~\ref{theo}, 
we have the following asymptotic formula for $G^{-1}$ (see Lemma~\ref{ell12a}(ii)):
\be{asymptGinv}
G^{-1}(s)=(1+o(1))\kappa s^{-\beta} L^{-\beta}\bigl(s^{-\beta}\bigr)\quad\hbox{as } s\to 0^+,
\quad\hbox{ where } \beta=\frac{1}{p-1},\ \ \kappa=\beta^\beta.
\ee
By combining with \eqref{infer1}, one obtains the explicit formula for the blow-up asymptotics of $u$, 
which can be applied for instance in all the examples above, and displays the 
precise effect of the multiplicative factor $L$ on the profile:
\be{asymptGinv2}
u(x,t)=(1+o(1))\kappa \bigg[T-t+\frac{p-1}{8p}\frac{|x|^2}{|\log|x||}\bigg]^{-\frac{1}{p-1}}
L^{-\frac{1}{p-1}}\bigg(\bigg[T-t+\frac{p-1}{8p}\frac{|x|^2}{|\log|x||}\bigg]^{-\frac{1}{p-1}}\biggr),
\ee
as $(x,t)\to (0,T)$.

\goodbreak

\begin{remark} \label{rem1b}
 {\bf Case of power nonlinearity.}
 In the classical, pure power case $f(s)=s^p$, 
 it was shown in \cite{herrero1992blow, Vel92} that the final blow-up profile~\eqref{fp}
with $G^{-1}(s)=((p-1)s)^{-1/(p-1)}$
and the corresponding refined space-time behavior \eqref{lou} occur whenever the blow-up solution is radially symmetric decreasing.
The corresponding global profile \eqref{infer1} 
was obtained by the second author
in \cite{souplet2019simplified} 
\footnote{In fact, even for $f(s)=s^p$, Theorem~\ref{theo} slightly improves on \cite{souplet2019simplified}
by providing a simpler, equivalent expression of the global profile.}.
 As for the precise rate of convergence \eqref{A1} in backward space-time parabolas
it was obtained in \cite{filippas1992refined, HV93, Vel93b} (see also \cite{FL93, bebernes1992final, Liu93}).

Actually, as a major achievement in the theory of nonlinear parabolic equations, the complete classification of
final blow-up profiles and refined space-time behaviors for $f(s)=s^p$ with $p<p_S$ (without symmetry assumptions)
was obtained in \cite{herrero1992blow, HV93, Vel92, Vel93b, merle1998refined},
as a result of very long, delicate and technical proofs, further developing the pioneering ideas from \cite{filippas1992refined, FL93}.
Among all the possible profiles, \eqref{fp} is the less singular one and it is moreover stable with respect to initial data in a suitable sense 
(see \cite{HV92c, MZ97, FMZ00}).
The results in \cite{herrero1992blow, Vel92} on the radially symmetric decreasing case, mentioned in the previous paragraph, were a by-product of the complete classification
and necessitated its full force.
On the contrary, the proof of the result in \cite{souplet2019simplified} was direct and relied on a simplified approach.

\end{remark}

\begin{remark} \label{remRelated}
(i) {\bf (Related results on blow-up profile)} 
As far as we know, the only previous study of refined blow-up profiles for problem \eqref{eqE1}
with a genuinely non-scale invariant nonlinearity\footnote{Nonlinearities with asymptotic scale invariance as $s\to\infty$ 
can be treated by similar methods as for the case $f(s)=s^p$, see~e.g.~\cite{BK94}.} was carried out  in \cite{duong2018construction},
where the special case of the logarithmic nonlinearity 
\be{logNL}
f(u)=u^p\log^q(2+u^2),\quad p>1, \ q\in\R
\ee
  was considered. 
  For \eqref{eqE1} with $f$ given by \eqref{logNL}, the authors of \cite{duong2018construction} construct
  a special, single-point blow-up solution which follows the asymptotic dynamics~\eqref{lou} and the final profile~\eqref{fp}
  (where the corresponding $G^{-1}$ behaves according to \eqref{asymptGinv} with $L(s)=2^q\log^q s$).
The stability of this blow-up behavior with respect to $L^\infty$ perturbations of initial data is also established.
The results in \cite{duong2018construction} are rather different from ours in nature: 
some special stable solutions, 
which are not necessarily symmetric, are constructed for a special nonlinearity;
on the contrary our results do not assert stability and are restricted to radial decreasing solutions,
 but they apply to any such solution and to a large class of nonlinearities.
The construction in \cite{duong2018construction} relies on rigorous matched asymptotics, reduction of the problem to a finite dimensional one and a topological argument. 
The proofs are thus very different from ours (see the next subsection for an outline of the main ideas of proof of our results).

\smallskip
(ii) {\bf (Related results on blow-up rate)} 
For any regularly varying nonlinearity $f$ of index $p\in(1,p_S)$,
it was recently proved by the second author \cite{souplet2022universal} that 
any positive classical  solution of \eqref{eqE1}
(without symmetry restriction) satisfies the universal, initial and final blow-up estimate:
$$\frac{f(u)}{u}\le C(\Omega,f)\bigl(t^{-1}+(T-t)^{-1}\bigr),\quad x\in \Omega,\ 0<t<T.$$
As a consequence  (cf.~Theorem~\ref{RDT} below and see the proof of~\cite[Theorem~5.1]{chabi1} for details), 
the blow-up rate for problem \eqref{eqE1} with such $f$ is always type I, namely:
\be{typeI}
\limsup_{t\to T}\frac{\|u(t)\|_\infty}{\psi(t)}<\infty.
\ee 
In the special case of the logarithmic nonlinearity \eqref{logNL}, the type I property \eqref{typeI}
was obtained before in \cite{HamzaZaag}.
The methods in \cite{souplet2022universal} and \cite{HamzaZaag} are completely different 
(parabolic Liouville theorems and energy arguments, respectively) and
as a further difference, the result in \cite{HamzaZaag} also applies to sign-changing solutions
 but involves constants depending of the solution instead of universal constants in \cite{souplet2022universal}.
Further parabolic Liouville theorems for non scale invariant nonlinearities and their applications 
to a priori estimates are studied in \cite{QSLiouv}.

For the semilinear heat equation in a bounded domain with rather general nonlinearities
(not necessarily of regular variation),
 but for the strongly restricted class of time increasing solutions,
 type I blow-up was proved in the classical paper \cite[Section~4]{friedman1985blow}.

\smallskip

(iii) {\bf (Other properties)} We refer to, e.g., \cite{friedman1985blow,LRS13,Fuj,FI18,LS20,FI22,FHIL} for results on other aspects of problem \eqref{eqE1}
with non scale invariant nonlinearities, such as local solvability for rough initial data, global existence/non-existence, blow-up set or small diffusion limit.
\end{remark}

\subsection{Ideas of proofs}

We end this section by explaining the main ideas and novelties of our proofs.

\smallskip

Assuming Theorem~\ref{Thm1}, the idea of proof of the lower estimate of Theorem~\ref{theo} is, 
for $t_0$ suitably close to $T$, to use the lower estimate on $u(t_0)$ provided by Theorem~\ref{Thm1}
in the region $|x|=O(\sqrt{T-t_0})$ and propagate it forward in time via a delicate comparison argument.
Although this follows the strategy from \cite{HV93} (see also \cite{souplet2019simplified}),
we need to introduce a number of 
nontrivial new ideas, in order to handle our non-scale invariant nonlinearities under assumptions only at infinity.
In fact, the comparison argument in the proofs in the above works used a further rescaling of the solution,
which is admissible only in case of power-like nonlinearities
(and a similar restriction applies in the alternative approach from \cite{merle1998refined}
without maximum principle, based on Liouville type theorems).
Among other things, we observe that such a rescaling can be avoided thanks to a
suitable modification of the comparison function.
\smallskip

The proof of the upper estimate in Theorem~\ref{theo} (and of Theorem~\ref{theoL} below for general nonlinearities)
is based on a suitable modification of the method in \cite{souplet2019simplified, souplet2022refined} for the cases $f(u)=u^p$ or $e^u$ respectively,
applying the maximum principle to a carefully chosen auxiliary functional and
integrating the resulting differential inequality. Namely, we shall consider
$J:=u_r+\frac{rf(u)}{2(A+\log f(u))}$
where $A > 0$ is a sufficiently large constant. 
Nontrivial difficulties arise in order to treat the case of general nonlinearities $f$, under assumptions at infinity only, 
including the necessity to work on a suitable parabolic subdomain where the solution is large.

\smallskip

Finally, the proof of Theorem~\ref{Thm1} involves two main arguments.
The first one is to rescale the solution by similarity variables and ODE normalization.
Such a rescaling, which extends the seminal idea from \cite{giga1985asymptotically} in the case $f(u)=u^p$,
was applied in \cite{duong2018construction} in the case of a logarithmic nonlinearity
to construct special blow-up solutions with prescribed profile,
and then recently by the first author in \cite{chabi1} for the general case to study 
the local asymptotics of general blow-up solutions.
We here combine it with a key technique of linearization and center manifold type analysis
introduced in \cite{filippas1992refined, HV93, Vel93b} for the case $f(u)=u^p$,
and simplified in \cite{souplet2019simplified} in the radial decreasing framework.
Here again significant difficulties arise in the case of non scale invariant nonlinearities
and the proof is rather long and technical.
 In particular, in the rescaled equation, a number of new terms arise from the slow variation factors.
In order to control them in the linearization process we need, among other things, to derive careful asymptotics of the ODE resolvent.

\smallskip

The outline of the rest of the paper is as follows.
In Section~3, we state results on upper blow-up profile estimates for more general nonlinearities.
Theorem~\ref{theoL}, Proposition~\ref{propp} and the upper part of Theorem~\ref{theo}
are proved in Section~4.
In Section~5 we prove the lower part of Theorem~\ref{theo} assuming Theorem~\ref{Thm1}.
The latter is then proved in Section~6.
Finally  the known facts from \cite{souplet2022universal, chabi1} that we use, as well as some auxiliary results and their proofs, 
are gathered in  Appendix.

\section{Upper estimates for more general nonlinearities}

For more general $f$ which do not satisfy the regular variation assumptions,
although the sharp results in Theorems~\ref{theo}-\ref{Thm1} are no longer available, we are still able to prove a
precise upper global space-time estimate.
In turn, this estimate will provide the upper part of the sharp estimate in Theorem~\ref{theo}.
Namely, we make the following assumptions:
	 \begin{equation}\label{sem0b}
 f\in C^1([0,\infty)),\quad f(0)\ge 0, \quad \lim_{s\to \infty}f(s)=\infty,
 \end{equation}
 \begin{equation}\label{assymption}
	\hbox{$f$ is $C^2$ and } \frac{f}{\log f} \hbox{ is convex at $\infty$,} \quad \int^\infty \frac{\log f(s)}{f(s)}ds <\infty.
	\end{equation}

\begin{thm}\label{theoL}
	Let $\Omega=\mathbb{R}^n \text{ or }\Omega=B_R$, and assume \eqref{sem0b}-\eqref{assymption}.
	Let $u_0\ge 0$ be radially symmetric, nonincreasing in $|x|$ and assume $T<\infty$.
	If $\Omega=\mathbb{R}^n$ assume also that $u_0$ is nonconstant. 
	Then there exist $A,\rho>0$ such that
	\begin{equation}\label{upperTheoL}
	u(x,t)\le H^{-1}\bigg(H(u(0,t))+\frac{|x|^2}{4}\bigg)\quad \text{in}\ B_\rho\times(T-\rho,T),
	\end{equation}
	where
	  \begin{equation}\label{choi}
	H(X)=\int_X^\infty \frac{A+\log f(s)}{f(s)}ds.
	\end{equation}
\end{thm}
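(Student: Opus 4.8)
The plan is to prove \eqref{upperTheoL} by the maximum principle applied to the auxiliary functional
\[
J(r,t):=u_r(r,t)+c(r,t)\,\frac{rf(u)}{A+\log f(u)},
\]
in the spirit of \cite{souplet2019simplified, souplet2022refined}, and then to integrate the resulting pointwise differential inequality in the radial variable. Writing $r=|x|$, for radial solutions the equation becomes $u_t=u_{rr}+\frac{n-1}{r}u_r+f(u)$, and by Friedman--McLeod type arguments one has $u_r\le 0$, with $0$ the only possible blow-up point (this is where nonconstancy of $u_0$ in the case $\Omega=\mathbb{R}^n$ and radial monotonicity are used). The first step is therefore to fix a parabolic subcylinder $Q_\rho:=B_\rho\times(T-\rho,T)$ on which $u$ is large, say $u\ge M$ with $M$ chosen large enough that $f$ is $C^2$, positive, $f/\log f$ convex, and $A+\log f>0$ there; the convexity and integrability hypotheses in \eqref{assymption} are exactly what is needed for $H$ in \eqref{choi} to be well defined and for the comparison function $H^{-1}(H(u(0,t))+|x|^2/4)$ to be a supersolution of the relevant ODE-type inequality.

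The core computation is to show that, for a suitable constant $A>0$ and on a suitable $Q_\rho$, the functional $J$ (with the simplest choice $c\equiv\frac12$, i.e. $J=u_r+\frac{rf(u)}{2(A+\log f(u))}$ as flagged in the introduction) satisfies a parabolic differential inequality of the form $J_t-J_{rr}-\frac{n-1}{r}J_r-b(r,t)J_r-d(r,t)J\le 0$ in $Q_\rho$, with $d$ bounded above, together with the boundary relations $J(0,t)=0$ and $J\le 0$ on the lateral boundary $r=\rho$ (the latter because $u$ stays bounded away from blow-up there while $u_r\le 0$, for $\rho$ small and $t$ close to $T$; one may need to first subtract off a harmless correction or shrink $\rho$). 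Once $J\le 0$ on the parabolic boundary of $Q_\rho$, the maximum principle gives $J\le 0$ throughout, i.e.
\[
u_r\le -\frac{rf(u)}{2(A+\log f(u))}\qquad\text{in }Q_\rho.
\]
Rewriting this as $\dfrac{A+\log f(u)}{f(u)}\,u_r\le -\dfrac r2$, i.e. $\dfrac{d}{dr}H(u)\ge \dfrac r2$, and integrating from $0$ to $r$ yields $H(u(r,t))\ge H(u(0,t))+\frac{r^2}{4}$; since $H$ is decreasing this is precisely \eqref{upperTheoL}.

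The main obstacle is the parabolic inequality for $J$: computing $J_t-J_{rr}-\frac{n-1}{r}J_r$ using the equation for $u$ and $u_r$ produces, besides the favorable terms that already appear in the pure-power case, a collection of new terms coming from the derivatives of $s\mapsto s/(A+\log f(s))$ — in particular terms involving $f'$, $f''$ and the quantities $f'/f$, $(f/\log f)''$. The role of the large additive constant $A$ is to make $A+\log f(u)$ dominate these corrections: dividing by $A+\log f$ everywhere, the ``error'' terms carry an extra factor $O(1/(A+\log f(u)))=O(1/A)$ (uniformly on $\{u\ge M\}$, using $C^2$ regularity and convexity of $f/\log f$ from \eqref{assymption}), so choosing $A$ large enough absorbs them into the good terms. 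The second delicate point is purely a domain issue — unlike the scale-invariant case one cannot rescale the solution, so all estimates must be carried out directly on $\{u\ge M\}\cap Q_\rho$, and one must verify that this set contains a full parabolic subcylinder $B_\rho\times(T-\rho,T)$ for $\rho$ small, which follows from continuity of $u$ up to $t=T$ away from $r=0$ and from $u(0,t)\to\infty$. Handling the boundary term at $r=\rho$ cleanly (so that $J\le 0$ there) and checking the sign/bound of the zeroth-order coefficient $d(r,t)$ are the remaining technical chores, but they are routine once $A$ has been fixed.
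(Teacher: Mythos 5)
Your overall strategy --- the auxiliary functional $J=u_r+\frac{rf(u)}{2(A+\log f(u))}$, a parabolic inequality $\mathcal{P}J\le 0$ via the maximum principle, then integration in $r$ --- is the same as the paper's. However, three of the steps you dismiss as routine are actually the places where the argument is delicate, and two of your claims are incorrect as stated.

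First, you take single-point blow-up for granted via ``Friedman--McLeod type arguments.'' The paper explicitly notes that the Friedman--McLeod result requires \emph{global} (not merely asymptotic) hypotheses on $f$ and does not apply when $\Omega=\R^n$; it therefore devotes an entire step (Step~3) to proving single-point blow-up from scratch, using a \emph{different} choice $c(r)=\epsilon\sin^2(\pi r/a)$ in the functional together with an independent maximum principle argument. This is not a byproduct of your main computation but a separate lemma you would need.

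Second, your claim that the set $\{u\ge M\}$ contains a full parabolic subcylinder $B_\rho\times(T-\rho,T)$ does not follow from continuity. Once single-point blow-up is known, $u(x,t)$ stays \emph{bounded} for $|x|=\rho>0$ fixed, so $u(\rho,t)<M$ is entirely possible for $t$ near $T$. Establishing $u\ge M$ on a fixed parabolic cylinder near $(0,T)$ is equivalent to the nondegeneracy statement $\lim_{(x,t)\to(0,T)}u(x,t)=\infty$, which is a genuine extra input (the paper uses it only later, via a separate reference, in the proof of the lower bound). The paper avoids needing it here by working on the set $Q=Q_0\cap\Sigma$ with $\Sigma=\{u\ge M\}$ --- which is \emph{not} a cylinder --- and running a careful maximum principle argument on $\overline Q=\bigcup_{i=1}^5 D_i$, then observing in the integration step that for each time slice the estimate on $(0,r(t)]$ suffices and extends trivially to $(r(t),\rho]$ since there $u<M\le$ the right-hand side.

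Third, your account of the role of $A$ is not how the argument works. With $c(r)=r/2$ and $F=f/(A+\log f)$, the interior parabolic inequality $\mathcal{P}J\le -\tfrac{r}{2}\,f\varphi\, f'\varphi^2<0$ holds on $\{u\ge M\}$ for \emph{every} $A\ge 0$; the new terms you worry about are controlled not by largeness of $A$ but by the structural facts $F'f-Ff'=-ff'\varphi^2$ and $F''\ge 0$, the latter following from the convexity of $f/\log f$ (this is exactly Lemma~\ref{lmfF}). The large constant $A$ is used for a different purpose: to force $J\le 0$ on the lateral boundary $r=R/2$, on the initial slice $t=T/2$, and on $\partial\Sigma$ (where $u=M$), via the bound $-k+\frac{f(u)}{2(A+\log f(u))}\le 0$ with $A$ chosen in terms of $\sup f(u)$ on those pieces. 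If you tried to absorb PDE error terms by taking $A$ large you would be fighting the wrong battle.
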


Note that, under our assumptions, $H(X)$ is well defined and decreasing for $X$ large and,
since $\lim_{t\to T}$ $u(0,t)=\infty$,
the RHS of \eqref{upperTheoL} is well defined 
for $\rho>0$ small. Estimate \eqref{upperTheoL} implies that
 $0$ is the only blow-up point of $u$, so that $u$ possesses a final blow-up profile (cf.~after Theorem~\ref{theo}).
As consequence of Theorem~\ref{theoL}, we have the following estimate of the final profile.

\begin{coro}[final profile estimate]\label{coro}
	Under the assumptions of Theorem~\ref{theoL} we have 
$$
	u(x,T)\le H^{-1}\bigg(\frac{|x|^2}{4}\bigg)\quad \text{in}\ B_\rho.
$$
\end{coro}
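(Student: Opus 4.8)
The plan is to simply let $t \to T$ in the space-time upper bound \eqref{upperTheoL} of Theorem~\ref{theoL}. First I would recall that, as observed right after Theorem~\ref{theoL}, estimate \eqref{upperTheoL} forces $0$ to be the only blow-up point of $u$, so that by standard interior parabolic regularity the limit $u(x,T):=\lim_{t\to T}u(x,t)$ exists for every $x\in B_\rho\setminus\{0\}$ (indeed for every $x\ne 0$). Thus the left-hand side of the desired inequality is well defined. The point is then to pass to the limit in the right-hand side of \eqref{upperTheoL}.

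The key observation is that $H^{-1}$, being the inverse of the decreasing function $H$ (decreasing for large arguments, with $H(X)\to 0$ as $X\to\infty$), is itself a continuous decreasing function on $(0,H(X_0)]$ for suitable $X_0$, and that $H(u(0,t))\to 0$ as $t\to T$ because $u(0,t)\to\infty$ and $H$ is continuous with $\lim_{X\to\infty}H(X)=0$. Hence for $x\ne 0$ fixed, the argument $H(u(0,t))+\frac{|x|^2}{4}$ converges to $\frac{|x|^2}{4}>0$ as $t\to T$, which for $x\in B_\rho$ lies in the domain where $H^{-1}$ is defined (shrinking $\rho$ if necessary so that $\frac{\rho^2}{4}\le H(X_0)$). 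By continuity of $H^{-1}$ at the point $\frac{|x|^2}{4}$, the right-hand side of \eqref{upperTheoL} converges to $H^{-1}\bigl(\frac{|x|^2}{4}\bigr)$. Taking the limit $t\to T$ on both sides of \eqref{upperTheoL} and using that $u(x,t)\to u(x,T)$ yields
$$
u(x,T)\le H^{-1}\Bigl(\tfrac{|x|^2}{4}\Bigr)\qquad\text{for }x\in B_\rho\setminus\{0\}.
$$

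Finally, the case $x=0$ is vacuous since $H^{-1}(0^+)=\lim_{X\to\infty}X=+\infty$, or one simply states the bound for $x\ne 0$; in either reading the stated inequality holds throughout $B_\rho$. There is essentially no obstacle here: the only mild care needed is to ensure that $\rho$ is chosen small enough (and/or that $H$ has been suitably extended/modified near the threshold $X_0$, as already implicitly done in the statement of Theorem~\ref{theoL}) so that $H^{-1}$ is defined and continuous on the relevant interval $\bigl(0,\tfrac{\rho^2}{4}\bigr]$ and so that \eqref{upperTheoL} applies on all of $B_\rho\times(T-\rho,T)$. This is immediate from the construction of $H$ in \eqref{choi}.
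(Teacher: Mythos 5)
Your proof is correct and follows the same approach as the paper: pass to the limit $t\to T$ in the bound \eqref{upperTheoL}, using that $H(u(0,t))\to 0$ since $u(0,t)\to\infty$ and that $H^{-1}$ is continuous. The paper's version is more terse but contains the identical idea.
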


\begin{remark} \label{rem3} 
The integrability condition in \eqref{assymption} is essentially optimal. Indeed global (resp.,~regional) blow-up occurs for $f(s)=s\log^k(2+s)$ with $k<2$ (resp.,~$=2$); cf.~\cite{lacey,GV96}.
\end{remark}

The upper estimate in Theorem~\ref{theo} will be derived from the following 
consequence of Theorem~\ref{theoL} where, for regularly varying nonlinearities, 
the apparently different form of the RHS
(involving $H^{-1}$) can in fact be recast in terms of $G^{-1}$.

\begin{prop}\label{propp}
 Let $\Omega$, $u_0$ be as in Theorem~\ref{theoL},
let $p>1$ and let $f$ satisfy \eqref{sem0b}.
Assume that $f$ is $C^2$ for large $s$ and that $L(s):=s^{-p}f(s)$ satisfies 
\be{hyplm1a}
\lim_{s\to\infty}\frac{sL'(s)}{L(s)}=\lim_{s\to\infty}\frac{s^2L''(s)}{L(s)}=0.
\ee
Then
	\be{conclprop31}
	u(x,t)\le (1+o(1))G^{-1}\bigg(G(u(0,t))+\frac{p-1}{4p}\frac{|x|^2}{\bigl|\log|x|^2\bigr|}\bigg)
	\quad \hbox{as $(x,t)\to (0,T)$,}	
	\ee
 where $G$ is given by \eqref{so}.
\end{prop}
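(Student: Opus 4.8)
The plan is to deduce Proposition~\ref{propp} from Theorem~\ref{theoL} by showing that, for regularly varying $f$, the right-hand side of \eqref{upperTheoL} can be rewritten, up to a $(1+o(1))$ factor in the argument, in the form appearing in \eqref{conclprop31}. First I would check that hypothesis \eqref{hyplm1a} implies the structural assumptions \eqref{sem0b}-\eqref{assymption} needed for Theorem~\ref{theoL}: positivity and divergence of $f$ are clear for $p>1$, and convexity of $f/\log f$ at infinity together with integrability of $\log f(s)/f(s)$ follow from the regular variation of index $p>1$ (one computes $(f/\log f)'' \sim (p-1)p\, s^{p-2}/\log f \to +\infty$ using \eqref{hyplm1a}, and $\int^\infty \log f(s)/f(s)\,ds \sim \int^\infty \frac{p\log s}{s^p L(s)}ds<\infty$ since $p>1$ and $L$ is slowly varying so $s^{p-\varepsilon}L(s)\to\infty$). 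So Theorem~\ref{theoL} applies and gives \eqref{upperTheoL} with $H$ as in \eqref{choi}.

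The core step is a comparison between $H$ and $G$. Write $H(X)=\int_X^\infty \frac{A+\log f(s)}{f(s)}ds$ and $G(X)=\int_X^\infty \frac{ds}{f(s)}$. The key asymptotic, which I would isolate as a lemma (or invoke from Section~7), is
\[
H(X)=(1+o(1))\,\frac{\log f(X)}{p-1}\,G(X)\qquad\text{as }X\to\infty,
\]
equivalently $H(X)\sim \frac{1}{p-1}\log f(X)\cdot G(X)$, and more precisely that $H^{-1}$ and $G^{-1}$ are related by $H^{-1}(Y)=(1+o(1))\,G^{-1}\bigl((1+o(1))\,\frac{(p-1)Y}{\log(1/Y)}\bigr)$ or a similar explicit conversion. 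This is proved by L'Hôpital / integration by parts: since $f$ is regularly varying of index $p$, $G(X)\sim \frac{X}{(p-1)f(X)}$ (Karamata), hence $1/f(s)=(1+o(1))(p-1)(-G'(s))$ is slowly-varying-times-$G$, and $\int_X^\infty \frac{\log f(s)}{f(s)}ds$ is dominated by its behavior near the lower endpoint, giving $H(X)=(1+o(1))\log f(X)\int_X^\infty \frac{ds}{f(s)} = (1+o(1))\log f(X)\,G(X)$; the additive constant $A$ is absorbed since $\log f(X)\to\infty$. One also needs $\log f(X)=(1+o(1))\,p\log X$ and, via \eqref{asymptGinv}-type estimates, $\log(1/G(X))=(1+o(1))(p-1)\log X$, so that $\log f$ can be re-expressed in terms of $|\log|x|^2|$ after inverting.

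Putting this together: in \eqref{upperTheoL}, set $Y=H(u(0,t))$, so $u(0,t)=H^{-1}(Y)$. Near blow-up $u(0,t)\to\infty$, so $Y\to 0$. Using the conversion lemma, $H^{-1}(Y)=(1+o(1))G^{-1}(\mu(Y))$ where $\mu(Y)=(1+o(1))\frac{(p-1)Y}{|\log Y|}$ satisfies $G(u(0,t))=(1+o(1))\mu(Y)$. Then
\[
H^{-1}\Bigl(Y+\tfrac{|x|^2}{4}\Bigr)=(1+o(1))\,G^{-1}\Bigl(\tfrac{(p-1)(Y+|x|^2/4)}{|\log(Y+|x|^2/4)|}(1+o(1))\Bigr);
\]
splitting $Y+|x|^2/4$ and using $|\log(Y+|x|^2/4)| = (1+o(1))|\log|x|^2|$ when $|x|^2$ dominates, or $=(1+o(1))|\log Y|=(1+o(1))(p-1)|\log u(0,t)|$ when $Y$ dominates — in either regime one checks the two contributions combine to give $G(u(0,t))+\frac{p-1}{4p}\frac{|x|^2}{|\log|x|^2|}$ inside $G^{-1}$, because $\frac{(p-1)\cdot |x|^2/4}{|\log|x|^2|}=\frac{p-1}{4}\cdot\frac{|x|^2}{|\log|x|^2|}$ and the extra factor $1/p$ comes from $\log f(u(0,t))\sim p|\log G(u(0,t))|/(p-1)$... — this bookkeeping of the logarithmic factors, matching the constant $\frac{p-1}{4p}$ exactly, is the delicate point and the main obstacle. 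I expect the hard part to be precisely this: carefully tracking how the slowly varying factor $L$ and the logarithmic weights transform under the passage from $H^{-1}$ to $G^{-1}$, in particular justifying that the crossover between the "$|x|$ large" and "$T-t$ large" regimes is uniform and produces the single clean building block $G(u(0,t))+\frac{p-1}{4p}|x|^2/|\log|x|^2|$, rather than merely a two-sided estimate. The monotonicity of $G^{-1}$ and $H^{-1}$ (both decreasing) will be used throughout to turn the $(1+o(1))$ in the arguments into the stated $(1+o(1))$ prefactor via the regular variation of $G^{-1}$ (from \eqref{asymptGinv}).
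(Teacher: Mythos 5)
Your overall strategy is the same as the paper's: verify \eqref{sem0b}-\eqref{assymption} from \eqref{hyplm1a}, invoke Theorem~\ref{theoL}, and then convert the $H^{-1}$-bound into a $G^{-1}$-bound via an asymptotic comparison of $H$ and $G$ (the paper's Lemma~\ref{lm1}). But there are two substantive problems.

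First, the constants in your comparison lemma are off. The correct relation (from \eqref{ch}--\eqref{ch1}) is $H(X)/G(X)\sim p\log X\sim\log f(X)$, i.e.\ $H(X)\sim G(X)\log f(X)$; your $H(X)\sim\frac{\log f(X)}{p-1}G(X)$ has a spurious $\frac{1}{p-1}$. Correspondingly, the correct inversion is $H^{-1}(Y)\sim G^{-1}\bigl(\frac{(p-1)Y}{p|\log Y|}\bigr)$ (the paper's \eqref{dam}), while yours, $H^{-1}(Y)\sim G^{-1}\bigl(\frac{(p-1)Y}{|\log Y|}\bigr)$, is missing the $\frac1p$. These are not a rescaling of each other (they aren't internally consistent), so they cannot both be absorbed into an $(1+o(1))$: the constant $\frac{p-1}{4p}$ in \eqref{conclprop31} is exact, and it would come out wrong.

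Second — and this is the point you yourself flag as ``the main obstacle'' — you do not actually explain how the two regimes ($G(m)$ dominant vs.\ $|x|^2$ dominant) are joined into the single additive building block $G(m)+\frac{p-1}{4p}\frac{|x|^2}{|\log|x|^2|}$. Case-splitting on which term dominates the sum does not give a uniform conclusion, because there is a wide crossover region where neither dominates. The paper's argument avoids case-splitting entirely: after writing $u\le H^{-1}\bigl((1+o(1))\bigl(\frac{p}{p-1}G(m)|\log G(m)|+\frac{|x|^2}{4}\bigr)\bigr)$ and applying \eqref{dam} and \eqref{equivGG}, one observes that the resulting denominator satisfies $\bigl|\log\bigl(\frac{p}{p-1}G(m)|\log G(m)|+\frac{|x|^2}{4}\bigr)\bigr|\le\min\bigl\{|\log G(m)|,\bigl|\log|x|^2\bigr|\bigr\}$ and then bounds the quotient term by term, using $|\log G(m)|$ for the first summand in the numerator and $|\log|x|^2|$ for the second; the monotonicity of $G^{-1}$ then converts this one-sided lower estimate on the argument into the desired upper bound on $u$. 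This uniform, one-sided estimate is the missing step in your proposal and is exactly what produces the clean building block without case analysis.

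(Your sketch of the convexity check also drops the $L(s)$ factor in $\tilde f''(s)\sim p(p-1)s^{p-2}\frac{L(s)}{\log f(s)}$, though the conclusion — positivity, hence convexity — still holds.)
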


We finally comment on the extension of Theorems~\ref{theo}-\ref{Thm1} 
to the case of critical and supercritical powers.

\begin{remark} \label{rem2} 
Let $f$, $u_0$ be as in Theorem~\ref{theo} with $n\ge 3$. As a consequence of our proofs, 
if $p=p_S$ and if blow-up is type I (cf.~\eqref{typeI}), or if $p>p_S$ and $u$ satisfies \eqref{ODEbu}, then the conclusions of Theorems~\ref{theo}-\ref{Thm1} and Corollary~\ref{Debo} remain valid.
Note that, unlike in the range $p<p_S$ (cf.~Remark~\ref{remRelated}), blow-up need not be type I for $p\ge p_S$ in general 
and that type I blow-up solutions need not satisfy \eqref{ODEbu} for $p>p_S$ (see, e.g., \cite[Section~25]{quittner2019superlinear}).
\end{remark}

\goodbreak 

{
\section{Proof of Theorem~\ref{theoL},  Proposition~\ref{propp}
 and  upper  part of  Theorem~\ref{theo}}
 \label{upperrrrrr}

\subsection{Preliminaries}

The proof of Theorem~\ref{theoL} is based on a suitable modification of the method in \cite{souplet2019simplified, souplet2022refined},
applying the maximum principle to a  carefully chosen auxiliary functional $J$ and 
integrating the resulting differential inequality. Namely, we shall consider
\be{defJlog}
J:=u_r+\frac{rf(u)}{2(A+\log f(u))},
\ee
where $A > 0$ is a sufficiently large constant, defined on a suitable parabolic domain.

\begin{remark} \label{remref}
{\bf (on previous related proofs)} The idea of using functionals of the form $J:=u_r+c(r)F(u)$ to derive upper final blow-up profile estimates
for problem \eqref{eqE1} was introduced in the seminal work \cite{friedman1985blow}.
Nonsharp upper estimates of the final profile in the cases
$f(u)=u^p$ and $f(u)=e^u$ were obtained there, based on the (nonoptimal) choices $c=\eps r$, with $\eps>0$ small, and
$F(u)=u^q$ with $q<p$, resp., $F(u)=e^{ku}$ with $k<1$.

\vskip 1.5pt

In the case $f(u)=u^p$, the technique of \cite{friedman1985blow}
was improved in \cite{GP86}, where the functional $J:=u_r+\frac{ru^p}{2p \log u}$
was used for particular initial data and suitable positive boundary values, and then combined with intersection-comparison
arguments, leading to the sharp upper part of the final space profile estimate for $p < p_S$ under an additional intersection number assumption on $u_0$
(see \cite[Theorem 7.3]{GP86} and final remark in \cite[p.815]{GP86}).
 In the case $f(u)= e^u$, the functional $J:=u_r+{\eps re^u\over 2+u}$ with $\eps>0$ small was used in
 \cite[Corollary~3.17]{bebernes1989mathematical}), leading to a better, but still nonsharp, upper estimate of the final space profile.

\vskip 1.5pt

In the above works, the possibility to establish sharp space-time estimates such as \eqref{infer1}-\eqref{lou}
by this method was not considered.
This was first done in \cite{souplet2019simplified}
in the case $f(u)=u^p$, and 
then in \cite{souplet2022refined} in the case $f(u)=e^u$, through the optimal choices 
$J:=u_r+\frac{ru^p}{2p(A+\log u)}$
and $J:=u_r+\frac{re^u}{2(A+u -\log(A + u))}$, respectively.

\vskip 1.5pt

It turns out that for very general nonlinearities $f$, under a suitable convexity assumption at infinity 
(cf.~\eqref{assymption}), 
sharp space-time estimates can finally be obtained by the choice \eqref{defJlog}, 
combined with a careful analysis involving the parabolic subdomain where $u(x,t)$ is large (see below).
\end{remark}

 We shall need the following lemma, which
collects some useful elementary properties related with our nonlinearities and auxiliary functions.

\begin{lem}\label{lmfF} Under the assumptions of Theorem~\ref{theoL}, there exist $K,M>0$ such that
$f\in C^2([M,\infty))$,
 \be{ffprimeK}
 f, f'\ge -K\quad\hbox{ on $[0,\infty)$}
 \ee
  \be{ffprimeM}
 f\ge e^4,\ f'\ge 0\quad\hbox{ on $[M,\infty).$}
 \ee
Moreover, for any $A\ge 0$, the functions
  \be{defFphiLem}
 F=f\varphi,\quad \varphi=\frac{1}{A+\log f}
\ee
are defined and $C^2$ on $[M,\infty)$ and satisfy
 \be{F00prime}
0<\varphi\le \frac{1}{4},
 \ee
 \be{F0prime}
F'=f'\varphi(1-\varphi),
 \ee
  \begin{equation}\label{supo}
F''\ge 0,
 \end{equation}
  \be{F0primediverge}
 \lim_\infty F'=\infty,\quad  \lim_\infty f'\varphi=\infty.
 \ee
 \end{lem}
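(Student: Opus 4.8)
The plan is to verify each assertion of Lemma~\ref{lmfF} directly from the hypotheses \eqref{sem0b}--\eqref{assymption}, starting from the existence of $M$ and $K$ and then deducing the properties of $F$ and $\varphi$ by elementary calculus. First I would note that since $f\in C^1([0,\infty))$ with $\lim_{s\to\infty}f(s)=\infty$, and $f$ is $C^2$ at infinity by \eqref{assymption}, there exists $M_0$ with $f\in C^2([M_0,\infty))$ and $f(s)\ge e^4$ for $s\ge M_0$. To get $f'\ge 0$ on $[M,\infty)$ for some $M\ge M_0$, I would use the convexity of $f/\log f$ at infinity: writing $g=f/\log f$, convexity of $g$ forces $g'$ to be nondecreasing, and since $g\to\infty$, $g'$ must eventually be $\ge 0$; a short computation gives $g'=f'\,\frac{\log f-1}{(\log f)^2}$, and since $\log f-1>0$ once $f>e$, the sign of $g'$ coincides with that of $f'$, whence $f'\ge 0$ on some $[M,\infty)$. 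Then \eqref{ffprimeK} follows because $f,f'$ are continuous on the compact $[0,M]$ hence bounded below there, and $f\ge 0$, $f'\ge 0$ on $[M,\infty)$; taking $K$ to be (minus) the smaller of the two infima on $[0,M]$, made positive, gives \eqref{ffprimeK}. This settles \eqref{ffprimeK}--\eqref{ffprimeM}.

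Next, for \eqref{defFphiLem}--\eqref{F0primediverge}: with $A\ge 0$ and $s\ge M$ we have $\log f(s)\ge 4$, so $A+\log f\ge 4>0$ and $\varphi=1/(A+\log f)$ is well defined, $C^2$, and satisfies $0<\varphi\le 1/4$, which is \eqref{F00prime}; consequently $F=f\varphi$ is $C^2$ on $[M,\infty)$. The first derivative identity \eqref{F0prime} is a direct computation: $\varphi'=-\varphi^2\,(\log f)'=-\varphi^2 f'/f$, so
\[
F'=f'\varphi+f\varphi'=f'\varphi-f\varphi^2\frac{f'}{f}=f'\varphi(1-\varphi).
\]
For the divergence statements \eqref{F0primediverge}, I would observe that $F=f/(A+\log f)\to\infty$ since $f\to\infty$ and $\log f$ grows far slower; but $F$ is $C^1$, nonnegative and — once \eqref{supo} is established — convex at infinity, hence $F'$ is nondecreasing and, being the derivative of an increasing function tending to $\infty$, cannot stay bounded, so $\lim_\infty F'=\infty$. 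Then $f'\varphi=F'/(1-\varphi)\ge F'$ since $1-\varphi\le 1$, giving $\lim_\infty f'\varphi=\infty$ as well.

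The main obstacle, and the step I would treat most carefully, is the convexity assertion $F''\ge 0$ in \eqref{supo}, since this is precisely where the convexity hypothesis on $f/\log f$ in \eqref{assymption} must be converted into convexity of $F=f/(A+\log f)$ for $A\ge 0$ large. The natural approach is to differentiate \eqref{F0prime} once more: $F''=f''\varphi(1-\varphi)+f'\varphi'(1-2\varphi)$, and substitute $\varphi'=-\varphi^2 f'/f$, yielding
\[
F''=\varphi(1-\varphi)\Bigl[f''-\frac{(f')^2}{f}\cdot\frac{1-2\varphi}{1-\varphi}\,\varphi\Bigr].
\]
Since $\varphi(1-\varphi)>0$, the sign of $F''$ is that of the bracket. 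I would compare this with the convexity of $g=f/\log f$: a direct computation of $g''$ shows $g''\ge0$ is equivalent to $f''\ge \frac{(f')^2}{f}\cdot\frac{2}{\log f}\bigl(1-\tfrac{3}{2\log f}+\cdots\bigr)$-type inequality, i.e. roughly $f''\log f\gtrsim 2(f')^2/f$; one then checks that the coefficient $\frac{1-2\varphi}{1-\varphi}\varphi=\frac{1-2\varphi}{1-\varphi}\cdot\frac1{A+\log f}$ appearing in the bracket for $F''$ is, for $A$ large and $s\ge M$, dominated by the coefficient forced by $g''\ge0$ — essentially because $\varphi=1/(A+\log f)$ is then small, so $(1-2\varphi)/(1-\varphi)\le1$ and the whole coefficient is $\le 1/\log f$, which is what $g''\ge0$ controls. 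Making this comparison quantitative (choosing $A$ so that the two estimates align for all $s\ge M$, possibly enlarging $M$) is the one genuinely delicate computation; it is routine in spirit but requires keeping track of the lower-order terms in $g''$, so I would carry it out explicitly. Everything else in the lemma is then immediate from the pieces above.
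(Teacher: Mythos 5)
Two of your steps contain genuine gaps.

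First, the argument you give for $\lim_\infty F'=\infty$ is incorrect as stated: convexity of $F$ together with $F\to\infty$ does \emph{not} force $F'\to\infty$ (take $F(s)=s$). The divergence of $F'$ comes precisely from the integrability hypothesis $\int^\infty\frac{\log f}{f}\,ds<\infty$ in \eqref{assymption}, which you never invoke. The correct reasoning, and the one the paper uses, is: since $F_0:=f/\log f$ is convex at infinity, $F_0'$ is eventually nondecreasing and so has a limit $\ell\in(-\infty,\infty]$; if $\ell<\infty$ then $F_0(s)=O(s)$, hence $\int^\infty\frac{ds}{F_0(s)}=\infty$, contradicting \eqref{assymption}; therefore $F_0'\to\infty$. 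One then transfers this to general $A$ via the two-sided comparison $f'\varphi\ge F'\ge\tfrac12 f'\varphi\ge\frac{1}{2(1+A)}F_0'$, which also gives $\lim_\infty f'\varphi=\infty$ and (via $F_0'\to\infty$ and $F_0'=f'\varphi_0(1-\varphi_0)$) the eventual sign $f'\ge0$ cleanly.

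Second, your treatment of the key convexity inequality \eqref{supo} doesn't close. You arrive (correctly) at the reduction to $\frac{ff''}{(f')^2}\ge h(\varphi)$, where $h(s)=\frac{s(1-2s)}{1-s}$, but then propose to bound $\frac{1-2\varphi}{1-\varphi}\le1$ so that $h(\varphi)\le\varphi\le\varphi_0=\frac{1}{\log f}$, and to conclude "for $A$ large". This fails for two reasons. (a) The lemma must hold for \emph{every} $A\ge0$, and the paper actually uses it with $A=0$ in the single-point blow-up step, so $A$ cannot be chosen large. (b) Even numerically the bound is too lossy: convexity of $F_0=f/\log f$ gives exactly $\frac{ff''}{(f')^2}\ge h(\varphi_0)$, and $h(\varphi_0)<\varphi_0$ for $\varphi_0\in(0,\tfrac12)$; when $A=0$ (so $\varphi=\varphi_0$) your crude estimate $h(\varphi)\le\varphi_0$ therefore overshoots the available lower bound $h(\varphi_0)$. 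The step that makes the comparison actually work is the observation that $h$ is \emph{increasing} on $[0,1/4]$ (since $h'(s)=\frac{1-4s+2s^2}{(1-s)^2}>0$ there), combined with $\varphi\le\varphi_0\le\tfrac14$: then $h(\varphi)\le h(\varphi_0)\le\frac{ff''}{(f')^2}$ and $F''\ge0$ follows. No enlargement of $M$ or restriction on $A$ is needed. The remaining parts of your proposal (existence of $K,M$, the sign of $f'$ via $g'=f'\,\frac{\log f-1}{(\log f)^2}$, and the computation of $F'$) match the paper's argument.
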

\begin{proof}[Proof]
By assumptions \eqref{sem0b}-\eqref{assymption}, there exists $M>0$ such that 
 $f\in C^2([M,\infty))$  and 
 the first part of \eqref{ffprimeM} and \eqref{F00prime} hold.
 Next, in $[M,\infty)$, since $\varphi'=-(f'/f) \varphi^2$, we have
$F'=f'\varphi+f\varphi'=f'(\varphi-\varphi^2)$, i.e.~\eqref{F0prime}.
Denote $F=F_0$ and $\varphi=\varphi_0$ for $A=0$. By assumptions \eqref{sem0b}-\eqref{assymption}, we have $\lim_\infty F_0=\infty$ and we may assume
 \be{F0primeprime}
 F_0''\ge 0 \quad\hbox{ on $[M,\infty)$.}
 \ee
The integrability condition in \eqref{assymption} then implies in particular that 
 \be{F00primediverge}
 \lim_\infty F_0'=\infty,
 \ee
  hence $\lim_\infty f'=\infty$  owing to \eqref{F0prime}. 
By increasing $M$ if necessary, we thus have the second part of \eqref{ffprimeM},  hence \eqref{ffprimeK} for some $K>0$. 
Next observing that \eqref{F0prime} 
implies
$$f'\varphi\ge F'\ge  \frac12 f'\varphi\ge \frac{1}{2(1+A)} f'\varphi_0\ge \frac{1}{2(1+A)} F'_0\quad\hbox{ on $[M,\infty)$,}$$
property \eqref{F0primediverge} follows from \eqref{F00primediverge}.

It remains to show \eqref{supo}. To this end, using \eqref{F0prime} and $\varphi'=-(f'/f) \varphi^2$, we compute
$$ \begin{aligned}
F''&=\bigl(f'\varphi(1-\varphi)\bigr)'=f''\varphi(1-\varphi)+f'\varphi'\bigl(1-2\varphi\bigr) \\
&=f''\varphi(1-\varphi)-\frac{{f'}^2\varphi^2}{f}\bigl(1-2\varphi\bigr) =\frac{\varphi(1-\varphi){f'}^2}{f}\Bigl(\frac{ff''}{{f'}^2}-\frac{\varphi(1-2\varphi)}{1-\varphi}\Bigr).
\end{aligned}$$
Consequently, \eqref{supo} is equivalent to $\frac{ff''}{{f'}^2}\ge h(\varphi)$, where 
$h(s)=\frac{s(1-2s)}{1-s}$. It is easily checked that $h'\ge 0$ on $[0,1/4]$.
It then follows from \eqref{F0primeprime} and $\log f\ge 4$ in $[M,\infty)$ (cf.~\eqref{ffprimeM}) that
$$\frac{ff''}{{f'}^2} \ge  h\Bigl(\frac{1}{\log f}\Bigr)\ge h\Bigl(\frac{1}{A+\log f}\Bigr)=h(\varphi)\quad  \text{ in } [M,\infty),$$
hence \eqref{supo}.
\end{proof}
}

\subsection{Proof of Theorem~\ref{theoL}}\label{sect}

\begin{proof}[Proof]
 We split the proof into several steps.

\smallskip

 \textit{\textbf{Step 1.} Preparations}. 
 Our assumptions guarantee that $u$ is radially symmetric decreasing in $|x|$ for all $t\in (0,T)$.
Setting $r=|x|$, we will sometimes denote $u(r,t)=u(x,t)$ without risk of confusion.

 Set $R=1$ in case $\Omega=\mathbb{R}^n$. 
In view of $f(0)\ge 0$ and \eqref{ffprimeK}, we have 
  \be{linearheatK}
  u_t-\Delta u\ge -Ku,
  \ee
  hence
$u\ge e^{-Kt}e^{t\Delta}u_0$ by the maximum principle, where $(e^{t\Delta})_{t\ge0}$ 
denotes the Dirichlet heat semigroup on $B_R$.
Consequently, there exists $\eta >0$ such that:
$$
 u(x,t)\ge\eta>0\quad \text{in } D:=\bar{B}_{R/2}\times[T/2,T).
$$
 Setting $\Omega_1:=\Omega\cap\{x:x_1>0\}$, we notice that $v:=u_{x_1}\le 0$ satisfies 
 $v_t-\Delta v=f'(u)v\le -Kv$ in $\Omega_1\times(0,T)$, owing to \eqref{ffprimeK}. 
 Therefore $v(t)\le z(t):= e^{-Kt}e^{(t-t_0)A}v(\cdot,t_0)$ in  $\Omega_1\times(t_0,T)$, where $t_0=T/4$ and $e^{tA}$ denotes the Dirichlet heat semigroup on $\Omega_1$. It follows from the strong maximum principle and the Hopf Lemma, applied to $z$, that $v(x_1,0,\cdots,0,t)\le -kx_1$ for all $(x_1,t)\in [0,R/2]\times[T/2,T)$ and some $k>0$. This yields
 \begin{equation}\label{board}
 u_r\le -kr \quad \text{in } [0,R/2]\times [T/2,T).
 \end{equation}

 \textit{\textbf{Step 2.} Parabolic inequality}. 
 With $M$ given by Lemma~\ref{lmfF}, we denote 
 \be{defQQ0}
 Q_0 := (0,R/2) \times (T/2, T),\quad \Sigma=\bigl\{(r,t)\in (0,R)\times (0,T);\ u(r,t)\ge M\bigr\},
\quad Q=Q_0\cap \Sigma.
\ee
We consider an auxiliary function of the form 
$$J := u_r(r, t) + c(r)F(u),$$
where the functions $c$ and $F$, with $F$ defined on $[M,\infty)$, will be chosen below. 
Similar to previous works, we have, in $Q$:
 \begin{equation}\label{oublier}
 c^{-1}\mathcal{P}J=F'f-Ff'+2c'FF'-c^2F''F^2+\Bigl[\frac{n-1}{r^2}\Bigl(1-\frac{rc'}{c}\Bigr)-\frac{c''}{c}\Bigr]F\quad \hbox{in $Q\cap\{c\ne 0\}$}.
 \end{equation}
We give the proof of \eqref{oublier} for completeness. Namely, we compute:
 \begin{equation*}
 \Big(\frac{\partial}{\partial t}-\frac{\partial^2}{\partial r^2} \Big)(cF(u))=cF'(u)(u_t-u_{rr})-cF''(u)u^2_r-2c'F'(u)u_r-c''F(u)
 \end{equation*}
 and
 \begin{equation*}
 \Big(\frac{\partial}{\partial t}-\frac{\partial^2}{\partial r^2} \Big)u_r=\frac{n-1}{r}u_{rr}-\frac{n-1}{r^2}u_r+f'(u)u_r.
 \end{equation*}
 Omitting the variables $r,t,u$ without risk of confusion, it follows that
 \begin{equation*}
 J_t-J_{rr}=\frac{n-1}{r}u_{rr}-\frac{n-1}{r^2}u_r+f'u_r+cF'\big(\frac{n-1}{r}u_r+f\big)-cF''u^2_r-2c'F'u_r-c''F.
 \end{equation*}
 Substituting $u_r=J-cF$ and $u_{rr}=J_r-c'F-cF'u_r=J_r-cF'J+c^2FF'-c'F$, we obtain
 \begin{align*}
 J_t-J_{rr}=&\frac{n-1}{r}\big(J_r-cF'J+c^2FF'-c'F\big)-\frac{n-1}{r^2}(J-cF)+f'(J-cF)\\
 &+cF'\big(\frac{n-1}{r}(J-cF)+f\big)-cF''(J-cF)^2-2c'F'(J-cF)-c''F.
 \end{align*}
 Setting 
 \begin{equation}\label{b}
 \mathcal{P}J:=J_t-J_{rr}-\frac{n-1}{r}J_r+bJ,\quad\text{with } b:=\frac{n-1}{r^2}-f'+cF''(J-2cF)+2c'F'
 \end{equation}
  it follows that
 \begin{equation*}
 \mathcal{P}J=\frac{n-1}{r}(c^2FF'-c'F)+\frac{n-1}{r^2}cF-cFf'+cF'\big(-\frac{n-1}{r}cF+f\big)-c^3F''F^2+2cc'FF'-c''F,
 \end{equation*}
hence \eqref{oublier}.
 
Now, for the rather general class of nonlinearities under consideration,
 an essentially optimal choice of $F$ will turn out to be
 \be{defFphi}
 F=f\varphi,\quad \varphi=\frac{1}{A+\log f}>0\quad  \text{ in } [M,\infty).
\ee
With this $F$, we have $F'f-Ff'=-ff'\varphi^2$ and $F''\ge 0$ owing to \eqref{F0prime}-\eqref{supo}
and we thus obtain
 \be{cfPJ}
    (cf\varphi)^{-1}\mathcal{P}J\le -f'\varphi\Bigl(1-2c'(1-\varphi)\Bigr)+\Bigl[\frac{n-1}{r^2}\Bigl(1-\frac{rc'}{c}\Bigr)-\frac{c''}{c}\Bigr]\ \hbox{ in $Q\cap\{c\ne 0\}$}.
\ee
Different choices of the constant $A\ge 0$ and of the function $c(r)$ will be made in the subsequent two steps. 
The first choice (cf.~Step~3) will be to guarantee single-point blow-up.
 Once this is proved, we will use (cf.~Step~4) a second, more precise choice, to obtain the desired sharp upper bound.

\medskip

  \textit{\textbf{Step 3.} Single-point blow-up.}
Although a similar property was proved in \cite{friedman1985blow}, 
the result there requires global (instead of asymptotic) hypotheses on $f$ 
and does not apply to $\Omega=\R^n$. 
We thus provide a detailed proof which fits our assumptions and framework.
 
  Assume for contradiction that there exists a blow-up point $r\ne 0$, say $r=3a\in(0,R/2)$. 
We first claim that, under this assumption,
\be{ClaimFMLimproved-Rn}
\min_{|x|\le a}u(x,t)\to \infty,\quad\hbox{as $t\to T$.}
\ee
Since $u$ is radially symmetric decreasing, there exists a sequence $t_j\to T$ such that $u(2a,t_j)\to\infty$.
Denoting by $\phi>0$ 
the first eigenfunction of the negative Dirichlet Laplacian in
$B_{2a}$ normalized by $\|\phi\|_\infty=1$ and $\lambda$ the corresponding eigenvalue,
it follows from \eqref{linearheatK} and the maximum principle that 
$u(x,t_j+t)\ge j e^{-(\lambda+K) t} \phi(x)$ in $B_{2a}$ for all $t\in (0,T-t_j)$.
Claim \eqref{ClaimFMLimproved-Rn} follows.

  We now take $F=F_0$ to be defined by \eqref{defFphi} with $A=0$.
  As for $c$, following \cite{FC88} (see also \cite{CM89}), we take
\be{Choice-c}
c(r)=\epsilon\sin^2(\pi r/a)
\ee
with $\epsilon>0$ to be chosen.
(Note that this choice, different from \cite{friedman1985blow}, has the advantage to cover both cases $\Omega=B_R$ and $\Omega=\R^n$.)
 We have
  $$\zeta(r):=\frac{n-1}{r^2}\Bigl(1-\frac{rc'}{c}\Bigr)-\frac{c''}{c}=
  \frac{n-1}{r^2}\Bigl(1-\frac{2\pi r}{a}\cot\Bigl(\frac{\pi r}{a}\Bigr)\Bigr)+2\Bigl(\frac{\pi}{a}\Bigr)^2\Bigl(1-\cot^2\Bigl(\frac{\pi r}{a}\Bigr)\Bigr),\quad 0<r<a,$$
  hence $\lim_{r\to 0^+} \zeta(r)= -\infty$, so that $K_1:=\sup_{r\in(0,a)} \zeta(r)<\infty$.
Also, by \eqref{ClaimFMLimproved-Rn} there exists $\eta>0$ such that $u\ge M$ in $(0,a)\times (T-\eta,T)$.
Taking $\epsilon$ small enough so that $\|c'\|_\infty\le 2\epsilon\pi a^{-1}\le 1/4$, 
 and using \eqref{ffprimeM}, \eqref{F00prime},  it follows from \eqref{cfPJ} that
  $$    (cf\varphi)^{-1}\mathcal{P}J\le -f'\varphi\bigl(1-2 |c'|\bigr)+K_1
  \le -\frac{1}{2}f'\varphi+K_1\quad\hbox{in $(0,a)\times (T-\eta,T)$.}$$
Since $\lim_\infty f'\varphi=\infty$ due to \eqref{F0primediverge},
by taking $\eta$ smaller if necessary (independent of $\epsilon$), it follows from \eqref{ClaimFMLimproved-Rn} 
that 
$$\mathcal{P}J\le 0\quad\hbox{in $(0,a)\times (T-\eta,T)$.}$$
Moreover, $J(0,t)=0$ owing to $u_r(0,t)=c(0)=0$ and $J(a,t)\le 0$ owing to $u_r\le 0$ and $c(a)=0$.
Taking $\epsilon$ possibly smaller and using \eqref{board}, we 
also have
$$J(r,T-\eta)\le \bigl\{-k +\epsilon\textstyle\frac{\pi^2}{a}\|F(u(\cdot,T-\eta))\|_{L^\infty(0,a)}\bigr\}r\le0\quad\hbox{in $[0,a]$.}$$
  Since the coefficient $b$ in \eqref{b} is bounded from below for $t$ bounded away from $T$, we may apply the maximum principle to deduce that $J \leq  0$ in $(0,a)\times (T-\eta,T)$,~i.e:
$$
 -\frac{u_r \log f(u)}{f(u)}\ge c(r)\quad\hbox{in $(0,a)\times (T-\eta,T)$.}
$$
By integration, recalling \eqref{assymption}, we deduce that 
$$\int_{u(a,t)}^\infty \frac{\log f(s)}{f(s)}ds\ge \int_0^ac(r)dr>0,\quad T-\eta<t<T,$$
hence $\sup_{t\in (T-\eta,T)}u(a,t)<\infty$. But this contradicts~\eqref{ClaimFMLimproved-Rn}.
We have proved that $0$ is the only blow-up point.

  \bigskip
 
 \textit{\textbf{Step 4.} Improved choice of auxiliary functions.}
  Now choose 
  $$c(r)=r/2,$$
  hence $c-rc'=c''=0$,
and let $F$ be given by \eqref{defFphi} with 
 \be{PJcontrad2A}
 A=k^{-1}\max\Bigl\{(\|f(u(\cdot,T/2))\|_\infty, f(M), \sup_{t\in(T/2,T)} f(u(R/2,t))\Bigr\}
 \ee
(note that the supremum in \eqref{PJcontrad2A} is finite owing to the fact that $0$ the only blow-up point).
By \eqref{cfPJ} we have:
 \be{PJneg}
    (cf\varphi)^{-1}\mathcal{P}J\le -f'\varphi\Bigl(1-2c'(1-\varphi)\Bigr)=-f'\varphi^2<0
\quad  \text{ in } Q.
\ee
We claim that
 \be{PJcontrad0}
 J\le 0 \quad  \text{ in } Q=Q_0\cap \Sigma.
 \ee
Since the set $Q$ involves the constraint $u>M$, some care is needed and we detail
the maximum principle argument to make everything safe.

Assume for contradiction that \eqref{PJcontrad0} fails, then there exists $\tau\in(T/2,T)$ such that
  \be{PJcontrad1}
 \max_{\overline Q \cap[T/2,\tau]} J>0.
 \ee
We split $\bar Q \cap[T/2,\tau]=\displaystyle\bigcup_{i=1}^5 D_i$, where
$$
D_1=\bigl(\{0\}\times [T/2,\tau]\bigr)\cap \Sigma,\quad
D_2=\bigl(\{R/2\}\times [T/2,\tau]\bigr)\cap \Sigma, \quad
D_3=\bigl([0,R/2]\times\{T/2\}\bigr)\cap \Sigma,
$$
$$
D_4=\bigl((0,R/2)\times (T/2,\tau]\bigr)\cap \Sigma,\quad
D_5=\bigl([0,R/2]\times [T/2,\tau]\bigr)\cap \partial\Sigma.
$$
We have 
 \be{PJcontrad2}
 J=0 \quad \text{on } D_1,
 \ee
 owing to $u_r(0,t)=c(0)=0$. Using \eqref{board}, \eqref{PJcontrad2A} and the fact that $u=M$ on $\partial\Sigma$, we have
 \be{PJcontrad3}
J\le \Bigl\{-k+\frac{f(u)}{2\big(A+\log f(u)\big)}\Bigr\}r
\le 0 \quad \text{in } D_2\cup D_3\cup D_5.
\ee
Next set
$$ \tilde J:=e^{Lt} J,\quad\hbox{where } L=L_\tau=-1+\inf\bigl\{b(x,t);\ (x,t)\in (0,R/2]\times [T/2,\tau]\bigr\}>-\infty.$$
It follows from \eqref{PJcontrad2}-\eqref{PJcontrad3} that $\tilde J\le 0$ in $D_1\cup D_2\cup D_3\cup D_5$.
By \eqref{PJcontrad1} we deduce that $\tilde J$ attains its positive maximum on $\bar Q$ at some point $X_0=(x_0,t_0)\in D_4$.
Since $\Sigma$ is open, there exists $\delta>0$ such that 
$[x_0-\delta,x_0+\delta]\times[t_0-\delta,t_0] \in \bar Q$
and at the point $X_0$, we thus have ${\tilde J}_r=0$, ${\tilde J}_{rr}\le 0$ and ${\tilde J}_t\ge 0$. 
In view of \eqref{PJneg}, this implies that, at the point $X_0$,
$$0\le {\tilde J}_t-{\tilde J}_{rr}-\frac{n-1}{r}{\tilde J}_r
=e^{Lt}\bigl(J_t-J_{rr}-\frac{n-1}{r}J_r+LJ\bigr)
=e^{Lt}\bigl(\mathcal{P}J+(L-b)J\bigr)<-e^{Lt}J<0.$$
 This is a contradiction and \eqref{PJcontrad0} is proved.

\medskip

 \textit{\textbf{Step 5.} Integration and conclusion}. Since $\lim_{t\to T} u(0,t)=\infty$ and $u$ is radially symmetric decreasing, 
 we may find $\eta\in(0,T/2)$ such that,
 for each $t\in (T-\eta,T)$, there exists $r_0(t)\in (0,R/2]$ 
 such that $u(r,t)\ge M$ on $(0,r(t)]$ and $u(r,t)<M$ on $(r(t),R/2]$ (this last interval possibly empty).
 Fixing $t\in(T-\eta,T)$, \eqref{PJcontrad0} yields
  \be{Lowerur}
  -u_r\ge \frac{rf(u)}{2\big(A+\log f(u)\big)},\quad 0<r<r(t)
  \ee
and, by integration, we obtain, for all $r\in (0,r(t)]$:
$$
 \frac{r^2}{4}\le-\int_{0}^{r}\frac{u_r(s)\big(A+\log f(u(s))\big)}{f(u(s))}ds  
 = \int_{u(r,t)}^{u(0,t)}\frac{A+\log f(y)}{f(y)}dy= H(u(r,t))-H(u(0,t)),
$$
 hence
$$
 H(u(r,t))\ge H(u(0,t))+\frac{r^2}{4},\quad 0<r<r(t).
$$
Since $H$ is a decreasing bijection from $[M,\infty)$ to $(0,H(M)]$ and $H^{-1}$ is also decreasing, we deduce that
 \begin{eqnarray}\label{Lothdam}
 u(r,t)\le H^{-1}\bigg(H(u(0,t))+\frac{r^2}{4}\bigg),\quad 0<r<r(t).
 \end{eqnarray}
 Moreover, for $\rho>0$ small, we have $H(u(0,t))+\frac{r^2}{4}\le H(M)$,
 hence
 $H^{-1}\big(H(u(0,t))+\frac{r^2}{4}\big)\ge M$, in $B_\rho\times(T-\rho,T)$,
 so that \eqref{Lothdam} remains true for $r\in(r(t),\rho]$ (where $u\le M$) and \eqref{upperTheoL}  is proved.
\end{proof}

 \begin{proof}[Proof of Corollary~\ref{coro}]
 Since $u(0,t)\to\infty$ as $t\to T$, we have $H(u(0,T))=0$. Then from \eqref{Lothdam} we 
 deduce
$$  u(r,T)\le H^{-1}\Big(\frac{r^2}{4}\Big) \quad \text{as } r\to0.$$
\end{proof}

 \subsection{Proof of upper part of Theorem~\ref{theo}} 
 
We first prove Proposition~\ref{propp}, from which the upper estimates in Theorem~\ref{theo} will follow easily.
 In order to deduce Proposition~\ref{propp} from Theorem~\ref{theoL},
we shall use the following lemma, which provides useful properties of $G, H$
and especially the relationships between $H$ and $G$
and between $H^{-1}$ and $G^{-1}$
when $f$ is regularly varying.
Here and in the rest of the paper, the asymptotic notation $\sim$
means that the quotient of the two functions converges to $1$.

\begin{lem}\label{lm1}  
 Let $p>1$, let $f$ be $C^1$ and positive for large $s$ and assume that $L(s):=s^{-p}f(s)$ satisfies 
\be{hyplm1}
\lim_{s\to\infty}\frac{sL'(s)}{L(s)}=0.
\ee
(i) As $X\to\infty$, we have
		\begin{align} 
			\label{ch}
		G(X)&\sim \frac{X^{1-p}}{(p-1)L(X)}, \\ 
		\label{ch1}
		H(X)&\sim \frac{pX^{1-p}\log X}{(p-1)L(X)}, \\ 
		\label{ch2}
	H(X)&\sim\frac{ p}{p-1}G(X)|\log(G(X))|. 
	\end{align}
	 (ii) If a function $\eps$ satisfies $\lim_{X\to\infty}\eps(X)=0$
	(resp., $\lim_{Y\to 0^+}\eps(Y)=0$) then
\be{equivGG}
\lim_{X\to\infty}\frac{G\bigl((1+\eps(X))X\bigr)}{G(X)}=1
\quad\Bigr(\hbox{resp.,} \lim_{Y\to 0^+}\frac{G^{-1}\bigl((1+\eps(Y))Y\bigr)}{G^{-1}(Y)}=1\Bigr)
\ee
and the similar properties hold for $H, H^{-1}$.
\smallskip

\noindent(iii) We have
	\begin{equation}\label{dam}
	H^{-1}(Y)\sim G^{-1}\bigg(\frac{(p-1)Y}{p|\log Y|}\bigg) \quad \text{as } Y\to 0^+.
	\end{equation}
\end{lem}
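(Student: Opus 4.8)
The plan is to prove parts (i), (ii), (iii) of Lemma~\ref{lm1} in that order, each one feeding into the next. For part (i), the asymptotics \eqref{ch} and \eqref{ch1} both follow from the same mechanism: writing $G(X)=\int_X^\infty s^{-p}L(s)^{-1}\,ds$ and $H(X)=\int_X^\infty s^{-p}L(s)^{-1}(A+\log s+\log L(s))\,ds$, I would observe that under \eqref{hyplm1} the function $L$ is slowly varying, so $L(s)\sim L(X)$ and $\log L(s)=o(\log s)$ on the relevant range, and the integrals are dominated by the behavior near the lower endpoint. The clean way to make this rigorous is to use the standard Karamata-type estimate: for a slowly varying $L$ and $q>1$, $\int_X^\infty s^{-q}L(s)\,ds\sim \frac{X^{1-q}L(X)}{q-1}$; applying this with $L$ replaced by $L^{-1}$ (still slowly varying) gives \eqref{ch}, and applying it with $L^{-1}$ replaced by $L^{-1}\cdot(A+\log+\log L)$ — which is regularly varying of index $0$ times a logarithm — and extracting the leading $\log X$ factor gives \eqref{ch1}. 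Then \eqref{ch2} is immediate: divide \eqref{ch1} by \eqref{ch} to get $H(X)\sim \frac{p}{p-1}G(X)\log X$, and from \eqref{ch} one has $\log G(X)=(1-p)\log X+O(\log\log X)=(1+o(1))(1-p)\log X$, so $\log X\sim \frac{1}{p-1}|\log G(X)|$, whence \eqref{ch2}.

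For part (ii), the key point is that $G$ and $H$ are regularly varying of index $1-p<0$ at $\infty$ (by (i), since they are asymptotic to $X^{1-p}$ times a slowly varying function). A basic property of regularly varying functions is the uniform convergence theorem: $G(\lambda X)/G(X)\to \lambda^{1-p}$ uniformly for $\lambda$ in compact subsets of $(0,\infty)$. Taking $\lambda=1+\eps(X)\to 1$ gives the first assertion of \eqref{equivGG}. For the inverse statement, note that $G$ decreasing and regularly varying of index $1-p$ forces $G^{-1}$ to be regularly varying of index $1/(1-p)$ at $0^+$ (the asymptotic inverse of a regularly varying function is regularly varying with reciprocal index), and the same uniform convergence argument applies. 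The $H$ and $H^{-1}$ cases are identical. I would state the regular variation facts I use explicitly and cite \cite{seneta} or \cite{bingham1989regular}.

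Part (iii) is where the real work is, and it is the step I expect to be the main obstacle, because it requires combining (i) and (ii) carefully rather than just invoking a black box. The strategy: set $X=H^{-1}(Y)$, so $Y=H(X)$ and $X\to\infty$ as $Y\to 0^+$. By \eqref{ch2}, $Y=H(X)=(1+o(1))\frac{p}{p-1}G(X)|\log G(X)|$. I want to show $G(X)=(1+o(1))G^{-1}$ evaluated at... — more precisely I want $X\sim G^{-1}\big(\frac{(p-1)Y}{p|\log Y|}\big)$, equivalently $G(X)\sim \frac{(p-1)Y}{p|\log Y|}$ after applying $G$ (using the inverse-version of (ii) to absorb the $(1+o(1))$). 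So it suffices to show $\frac{(p-1)Y}{p|\log Y|}=(1+o(1))G(X)$. Substituting $Y=(1+o(1))\frac{p}{p-1}G(X)|\log G(X)|$ into the left side, the $\frac{p}{p-1}$ and $\frac{p-1}{p}$ cancel and we get $\frac{(1+o(1))G(X)|\log G(X)|}{|\log Y|}$, so it remains to check $|\log Y|\sim|\log G(X)|$. But $\log Y=\log G(X)+\log|\log G(X)|+O(1)=(1+o(1))\log G(X)$ since $G(X)\to 0$ so $|\log G(X)|\to\infty$ dominates $\log|\log G(X)|$. That closes it. The delicate bookkeeping is making sure each $(1+o(1))$ factor can legitimately be pulled through $G^{-1}$, which is exactly what the ``inverse'' half of part (ii) is for; I would be careful to phrase the final chain as: $H^{-1}(Y)=X$, $G(X)=(1+o(1))\frac{(p-1)Y}{p|\log Y|}$, hence applying $G^{-1}$ and \eqref{equivGG}, $X=(1+o(1))G^{-1}\big(\frac{(p-1)Y}{p|\log Y|}\big)$ — but since $G^{-1}\circ G\ne \mathrm{id}$ exactly only asymptotically here I would instead apply $G^{-1}$ to the *equivalent* quantities, i.e. use that $G(X)\sim Z:=\frac{(p-1)Y}{p|\log Y|}$ implies $X=G^{-1}(G(X))$ and $G^{-1}(G(X))\sim G^{-1}(Z)$ by the monotone version of \eqref{equivGG} applied at the argument $G(X)$. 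I will write this last deduction out explicitly to avoid circularity.
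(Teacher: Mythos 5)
Your proposal is correct, but it routes through the general machinery of regular variation theory where the paper works elementarily. For part (i), you invoke Karamata's integral theorem (the estimate $\int_X^\infty s^{-q}\ell(s)\,ds\sim\frac{X^{1-q}\ell(X)}{q-1}$ for $\ell$ slowly varying and $q>1$); the paper instead proves exactly this asymptotic for $G$ by hand, via integration by parts together with the observation that $1/L(s)=o(s^\eta)$ for any $\eta>0$, and then handles \eqref{ch1} by applying the already-proved \eqref{ch} to the modified slowly varying factor $\tilde L(s)=s^{-p}\tilde f(s)$ with $\tilde f=f/(A+\log f)$, after verifying $\tilde L$ again satisfies \eqref{hyplm1}. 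These are the same computation in spirit. For part (ii), you invoke the Uniform Convergence Theorem for regularly varying functions and the fact that the (monotone) inverse of a regularly varying function is regularly varying of reciprocal index; the paper instead derives the needed two-sided inequalities directly from \eqref{ch} and the decreasing monotonicity of $G$, which is more self-contained but conceptually identical. For part (iii), your argument and the paper's are mirror images: you set $X=H^{-1}(Y)$ and show $G(X)\sim\frac{(p-1)Y}{p|\log Y|}$ before applying $G^{-1}$ via (ii); the paper sets $X=G^{-1}\bigl(\frac{(p-1)Y}{p|\log Y|}\bigr)$ and shows $Y\sim H(X)$ before applying $H^{-1}$ via (ii). Both rely on the same observation $|\log Y|\sim|\log G(X)|$ and on the inverse half of (ii) to absorb the $(1+o(1))$, which you correctly flag as the delicate point and handle properly. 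One small slip: you write the integrand of $H$ as $s^{-p}L(s)^{-1}(A+\log s+\log L(s))$, dropping the factor $p$ multiplying $\log s$ (since $\log f(s)=p\log s+\log L(s)$); this factor is precisely where the $p$ in \eqref{ch1} comes from, so fix the transcription, though it is clearly just a typo since your stated conclusion carries the correct constant.
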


 In order not to interrupt the main line of arguments,
we postpone the proof to  the Appendix.

 \begin{proof}[Proof of Proposition~\ref{propp}]
 Let us check that the hypothesis \eqref{assymption} of Theorem~\ref{theoL} is satisfied.
By assumption \eqref{hyplm1a}, we have $f'(s)\sim ps^{p-1}L(s)$, $f''(s)\sim p(p-1)s^{p-2}L(s)$
and $\frac{{f'}^2(s)}{f(s)}\sim p^2s^{p-2}L(s)$ as $s\to\infty$.
 Let $s_0>0$ be such that $L(s)>0$ and $f(s)\ge 2$ for all $s\ge s_0$.
For $s\ge s_0$, setting $\tilde f(s)=\frac{f(s)}{\log f(s)}$, we compute
$\tilde f'(s)=f'(s)\bigl(\frac{1}{\log f(s)}-\frac{1}{\log^2 f(s)}\bigr)$
and then
$$\tilde f''(s)=f''(s)\Bigl(\frac{1}{\log f(s)}-\frac{1}{\log^2 f(s)}\Bigr)
+\frac{{f'}^2(s)}{f(s)}\Bigl(-\frac{1}{\log^2 f(s)}+\frac{2}{\log^3 f(s)}\Bigr)
\sim p(p-1)s^{p-2}\frac{L(s)}{\log f(s)}$$
as $s\to\infty$. It follows that
$\tilde f(s)$ is convex for large $s$.
  Also, since \eqref{hyplm1a} implies $\log f(s)\sim_\infty p\log s$ and, for any $\eta>0$,
 $(s^{\eta/2} L(s))'>0$ for $s$ large hence $s^\eta L(s)\to \infty$ as $s\to\infty$,
 the integral in \eqref{assymption} is convergent.

\smallskip

We may thus apply Theorem~\ref{theoL} 
and we arrive at 
$$
u(x,t)\le H^{-1}\left(H(u(0,t))+\frac{|x|^2}{4}\right)\quad \text{in } B_{\rho}\times[T-\rho,T).
$$
Since $m(t):=u(0,t)=\|u(t)\|_\infty\to \infty$,  hence $G(m(t))\to 0$, as $t\to T$, 
applying Lemma~\ref{lm1}(i), we get
$$u(x,t)\le H^{-1}\left(H(m)+\frac{|x|^2}{4}\right)
 =H^{-1}\left((1+o(1))\left(\frac{ p}{p-1}G(m)|\log(G(m))|+\frac{|x|^2}{4}\right)\right)$$
as $(x,t)\to (0,T)$.  Next applying Lemma~\ref{lm1}(ii) and (iii), we obtain
\be{nuit}
u(x,t)\le (1+o(1))G^{-1}\left(\frac{G(m)|\log(G(m))|+\frac{p-1}{p}\frac{|x|^2}{4}}
{\bigl|\log\bigl(\frac{p}{p-1}G(m)|\log(G(m))|+\frac{|x|^2}{4}\bigr)\bigr|}\right).
\ee
Also,  for $(x,t)$ close enough to $(0,T)$,
we have $G(m)\le \frac{p}{p-1}G(m)|\log(G(m))|+\frac{|x|^2}{4}<1$,
so that
$$\begin{aligned}
\bigg|\log\bigg(\frac{p}{p-1}G(m)|\log G(m)|+\frac{|x|^2}{4}\bigg)\bigg|
&\le\min\left\{|\log(G(m))|,\bigl|\log\bigl(\textstyle\frac{|x|^2}{4}\bigr)\bigr|\right\}\\
&=(1+o(1))\min\left\{|\log(G(m))|,\bigl|\log|x|^2\bigr|\right\}
\end{aligned}$$
as $(x,t)\to (0,T)$, hence
$$\frac{G(m)|\log(G(m))|+\frac{p-1}{p}\frac{|x|^2}{4}}
{\bigl|\log\bigl(\frac{p}{p-1}G(m)|\log(G(m))|+\frac{|x|^2}{4}\bigr)\bigr|}
\ge (1+o(1)\left(G(m)+\frac{p-1}{4p}\frac{|x|^2}{\bigl|\log|x|^2\bigr|}\right).$$
 Since $G^{-1}$ is nonincreasing, we thus deduce from \eqref{nuit} that
$$
u(x,t)\le(1+o(1)) G^{-1}\left((1+o(1)\left(G(m)+\frac{p-1}{4p}\frac{|x|^2}{\bigl|\log|x|^2\bigr|}\right)\right),
\quad (x,t)\to (0,T).
$$
 Applying Lemma~\ref{lm1}(ii) once again we reach the desired conclusion \eqref{conclprop31}.
\end{proof}

\begin{proof}[Proof of upper estimates in Theorem~\ref{theo}] 
 By \eqref{jojo0} in Theorem~\ref{RDT},  
 we know that
$$\lim_{t\to T}\frac{m(t)}{G^{-1}(T-t)} =1,$$
where $m(t)=u(0,t)$ and, by \eqref{equivGG}, this guarantees that
\be{J1}
G(m(t))=(1+o(1))(T-t),\quad t\to T.
\ee
Combining \eqref{conclprop31} in Proposition~\ref{propp} with \eqref{J1} and applying Lemma~\ref{lm1}(ii), 
we obtain the upper part of~\eqref{infer1}.
\end{proof}

\section{Proof of lower part of Theorem~\ref{theo} assuming Theorem~\ref{Thm1} }

 For $\delta>0$ to be chosen below, we denote by $(S_\delta(t))_{t\ge 0}$ the Dirichlet heat semigroup on $B_\delta\subset\R^n$.
We shall use the following simple lemma.

\begin{lem}\label{lemSrho} 
(i) There exists a constant $\lambda=\lambda(n,\delta)>0$ such that, for any $K,N>0$,
\be{auxilZ1Z2}
\Big[S_\delta(t) \bigl(N-K|x|^2\bigr)\Big]_+\ge e^{-\lambda t}\big[N-K(|x|^2+2nt)\big]_+
\ \hbox{ in $Q:=\overline B_{\delta/2}\times[0,\infty)$.}
\ee

(ii) For all $\phi\in L^\infty(B_\delta)$, we have
\be{auxilHeat}
\bigl|(S_\delta(t)\phi)(x)\bigr|\le C(n)t^{-n/4}\bigg(\int_{B_\delta}|\phi(z)|^2e^{-\frac{|z|^2}{4t}}dz\bigg)^{1/2}e^{\frac{|x|^2}{2t}},\quad x\in B_\delta, \ t>0.
\end{equation}
 \end{lem}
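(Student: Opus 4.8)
The plan is to prove the two parts separately, both by elementary heat-kernel arguments on $B_\delta$. For part (i), I would first observe that on all of $\R^n$ the function $w(x,t):=N-K(|x|^2+2nt)$ is an exact solution of the heat equation $w_t=\Delta w$ (since $\Delta(|x|^2)=2n$), hence $[w]_+$ is a subsolution of the heat equation in the sense of the maximum principle. The issue is passing from the whole-space solution to the Dirichlet semigroup on $B_\delta$, which kills mass near $\partial B_\delta$. To absorb this, I would exploit that we only need the estimate on the smaller ball $\overline B_{\delta/2}$, where the first Dirichlet eigenfunction $\phi_1>0$ of $B_\delta$ is bounded below by a positive constant. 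The cleanest route: let $\lambda=\lambda_1(n,\delta)>0$ be such that $e^{-\lambda t}$ times a suitable multiple of $\phi_1$ controls things; more concretely, one checks that $z(x,t):=e^{-\lambda t}[w(x,t)]_+$ is a subsolution of $z_t-\Delta z+\lambda z\le 0$ wherever $z>0$ (using that $[w]_+$ is a heat subsolution and adding the $\lambda z$ damping only helps), that $z=0\le S_\delta(t)[N-K|x|^2]_+$ on $\partial B_\delta$ when restricted appropriately, and that $z(x,0)=[N-K|x|^2]_+$. The comparison principle for the operator $\partial_t-\Delta+\lambda$ on $B_\delta$ then gives $z(x,t)\le e^{\lambda t}S_\delta(t)[N-K|x|^2]_+$... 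I would instead set it up so the damping sits on the correct side: compare $S_\delta(t)[N-K|x|^2]_+$ from below by $e^{-\lambda t}[w(x,t)]_+$ directly, noting the latter vanishes on the parabolic boundary of $\{z>0\}\cap(B_\delta\times(0,\infty))$ once $\lambda$ is large enough that the support of $[w(\cdot,t)]_+$ (a shrinking ball as $t$ grows, plus the fixed geometry) interacts correctly with $B_{\delta/2}$; the restriction to $Q=\overline B_{\delta/2}\times[0,\infty)$ is exactly what makes the boundary term harmless, since for $x\in\partial B_{\delta/2}$ either $[w]_+=0$ or one uses the eigenvalue lower bound to pay for the exponential.

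For part (ii), this is a direct Gaussian-kernel estimate. I would bound the Dirichlet heat kernel $p^{B_\delta}_t(x,z)$ on $B_\delta$ by the whole-space kernel $(4\pi t)^{-n/2}e^{-|x-z|^2/(4t)}$, then write
$$|(S_\delta(t)\phi)(x)|\le (4\pi t)^{-n/2}\int_{B_\delta}|\phi(z)|e^{-\frac{|x-z|^2}{4t}}dz,$$
and apply Cauchy--Schwarz with the splitting $e^{-|x-z|^2/(4t)}=e^{-|x-z|^2/(8t)}\cdot e^{-|x-z|^2/(8t)}$. The first factor, after integrating $e^{-|x-z|^2/(4t)}$ over $z\in\R^n$, contributes $(8\pi t)^{n/2}$... more precisely I would keep one full Gaussian to integrate out (giving a power of $t$) and use the elementary inequality $|x-z|^2\ge \tfrac12|z|^2-|x|^2$ on the other to extract the factors $e^{-|z|^2/(4t)}$ inside the integral and $e^{|x|^2/(2t)}$ outside. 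Collecting the powers of $t$ yields the stated $t^{-n/4}$ and the constant $C(n)$.

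The main obstacle is part (i): getting the \emph{explicit} structure $N-K(|x|^2+2nt)$ inside $[\cdot]_+$ on the right, with only the scalar loss $e^{-\lambda t}$, requires being careful that the comparison is done against the \emph{exact} heat solution $w$ rather than a crude sub/supersolution, and that the Dirichlet boundary loss on $\partial B_\delta$ is fully accounted for by restricting the conclusion to $\overline B_{\delta/2}$ and paying with the first eigenvalue — one must check the supports and the sign of the boundary contribution match up, which is the only place the geometry (ratio $\delta/2$ versus $\delta$) genuinely enters. Part (ii) I expect to be completely routine.
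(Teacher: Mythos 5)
Part (ii) of your proposal is correct and essentially identical to the paper's argument: bound the Dirichlet kernel by the Gaussian kernel, extract $e^{|x|^2/(2t)}$ and $e^{-|z|^2/(4t)}$ from the Gaussian by an elementary quadratic inequality, and finish with Cauchy--Schwarz, which produces the $t^{-n/4}$ factor. Any such splitting works; the paper happens to use the identity $\frac14|x-z|^2+\frac12|x|^2-\frac16|z|^2=\frac1{12}|z-3x|^2\ge0$.

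For part (i), however, your plan has a genuine gap. You want to compare the whole-space caloric function $w(x,t)=N-K(|x|^2+2nt)$, or $e^{-\lambda t}[w]_+$, directly with $S_\delta(t)(N-K|x|^2)$ by the maximum principle. But the boundary data are incompatible: for $t>0$ one has $S_\delta(t)(N-K|x|^2)=0$ on $\partial B_\delta$, whereas $e^{-\lambda t}[w(x,t)]_+$ is strictly positive there whenever $N>K\delta^2$ and $t$ is small. So a comparison on $B_\delta$ fails outright, regardless of how large $\lambda$ is. Restricting the comparison domain to $B_{\delta/2}$ does not save it: you then need a lower bound for $S_\delta(t)(N-K|x|^2)$ on the lateral boundary $\partial B_{\delta/2}\times(0,\infty)$, which is precisely the quantity you are trying to estimate -- the argument becomes circular. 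The remark that "either $[w]_+=0$ or one uses the eigenvalue lower bound to pay for the exponential" is exactly the missing step, and as stated it is not a proof. What resolves this in the paper is a product ansatz $Z=Z_1Z_2$ with $Z_1=S_\delta(t)\chi_{B_\delta}$ and $Z_2=N-K(|x|^2+2nt)$: then $Z$ vanishes on $\partial B_\delta$ automatically (because $Z_1$ does), one computes $Z_t-\Delta Z=4Kx\cdot\nabla Z_1\le 0$ since $Z_1(\cdot,t)$ is radially decreasing, so $S_\delta(t)(N-K|x|^2)\ge Z$ on $B_\delta$ and hence $[S_\delta(t)(N-K|x|^2)]_+\ge Z_1[Z_2]_+$. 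The exponential factor then comes from showing $Z_1(x_0,t)\ge e^{-\lambda t}$ for $x_0\in\overline B_{\delta/2}$ by comparing $Z_1$ from below with the translated first eigenfunction $e^{-\lambda t}\phi(x-x_0)$ of $B_{\delta/2}(x_0)\subset B_\delta$ (with $\lambda$ the first eigenvalue of $B_{\delta/2}$), a step which is entirely decoupled from $N$, $K$, and $w$. It is this factorization of the comparison function that your proposal is missing.
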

 
\begin{proof}
(i) Set 
$$Z=Z_1Z_2,\ \hbox{ where } Z_1(x,t)=S_\delta(t) \chi_{B_\delta},\quad Z_2(x,t)= N-K(|x|^2+2nt),
\quad \hbox{for all } (x,t)\in \overline B_\delta\times[0,\infty).$$
It is well known that $Z_1(\cdot,t)\ge 0$ is radially symmetric decreasing for each $t>0$
(see, e.g., \cite[Proposition~52.17*(ii)]{quittner2019superlinear}).
Then, using $\partial_t Z_i-\Delta Z_i=0$, we obtain 
$$Z_t-\Delta Z=Z_1\bigl(\partial_t Z_2-\Delta Z_2\bigr)+Z_2\bigl(\partial_t Z_1-\Delta Z_1\bigr)-2\nabla Z_1\cdot\nabla Z_2
=4Kx\cdot\nabla Z_1\le 0$$ 
in $B_\delta\times[0,\infty)$, with $Z(0)=N-K|x|^2$ and $Z_{|\partial B_\delta}=0$. Therefore, by the maximum principle, we get
$S_\delta(t)\big(N-K|x|^2\big)\ge Z(t)$,
hence $\bigl[S_\delta(t)\big(N-K|x|^2\bigr)\bigr]_+\ge Z_+(t)= Z_1(t)[Z_2(t)]_+$.
\smallskip

Let now $\lambda$ be the first Dirichlet eigenvalue of $-\Delta$ in $B_{\delta/2}$, 
and $\phi$ be the corresponding eigenfunction, normalized by $\phi(0)=1$.
Since $\phi$ is radially symmetric decreasing, we have $\max_{\overline B_{\delta/2}}\phi=1$.
For each $x_0\in \overline B_{\delta/2}$, set $\omega_{x_0}=B_{\delta/2}(x_0)\subset B_\delta$ and 
$\underline Z_1:= e^{-\lambda t}\phi(x-x_0)$,
which satisfies $\partial_t \underline Z_1-\Delta \underline Z_1=0$ in $\omega_{x_0}\times (0,\infty)$,
with $\underline Z_1\le Z_1$ on the parabolic boundary.
It follows from the maximum principle
 that $Z_1\ge\underline Z_1$ in $\omega_{x_0}\times (0,\infty)$, hence in particular
$Z_1(t,x_0)\ge e^{-\lambda t}$.
This proves \eqref{auxilZ1Z2}.

\smallskip

(ii) By standard comparison with the Gaussian heat kernel, we have 
$\bigl|(S_\delta(t)\phi)(x)\bigr|\le(4\pi t)^{-n/2}\int_{B_\delta}|\phi(z)|$ $e^{-|x-z|^2/4t}dz$.
On the other hand, using the identity 
$\frac{1}{4}|x-z|^2+\frac{1}{2}|x|^2-\frac{1}{6}|z|^2=\frac{1}{12}|z-3x|^2\ge0$
and the Cauchy-Schwarz inequality, we obtain,
$$
\int_{B_\delta}|\phi(z)|e^{-\frac{|x-z|^2}{4t}}dz\le e^{\frac{|x|^2}{2t}}\int_{B_\delta}|\phi(z)|e^{-\frac{|z|^2}{6t}}dz
\le Ct^{n/4}e^{\frac{|x|^2}{2t}}\bigg(\int_{B_\delta}|\phi(z)|^2e^{-\frac{|z|^2}{4t}}dz\bigg)^{1/2},
$$
hence \eqref{auxilHeat}.
\end{proof}

The idea of proof of the lower estimate of Theorem~\ref{theo} is, 
for $t_0$ suitably close to $T$, to use the lower estimate on $u(t_0)$ provided by Theorem~\ref{Thm1}
in the region $|x|=O(\sqrt{T-t_0})$ and propagate it forward in time via a comparison argument.
Although this follows the strategy from \cite{HV93} and \cite{souplet2019simplified}, 
we need to introduce a number of 
nontrivial new ideas, in order to handle our non-scale invariant nonlinearities under assumptions only at infinity.
In particular, instead of rescaling the solution near the blow-up time like in  \cite{HV93,souplet2019simplified}
(see \cite[Lemma~6.1]{HV93} and \cite[formula (4.1)]{souplet2019simplified}; which can be done only for power-like nonlinearities),
we manage (cf.~Steps~2 and 3) to directly apply a comparison argument on the original equation
by relying on Lemma~\ref{lemSrho}.

\begin{proof}[Proof of lower estimate of Theorem~\ref{theo} assuming Theorem~\ref{Thm1}]
We divide the proof into several steps for clarity.
\smallskip

 {\bf Step 1.} {\it Preparations.} Noting that $\frac{s^2L''}{L} = s \zeta'- \zeta + \zeta^2$ with $\zeta=\frac{sL'}{L}$,
we deduce from assumptions \eqref{sem0}-\eqref{sem} that
 \be{limLprime}
\lim_{s\to\infty}\frac{sL'(s)}{L(s)}=\lim_{s\to\infty}\frac{s^2L''(s)}{L(s)}=0.
\ee
Consequently, $f'(s)\sim ps^{p-1}L(s)$ and $f''(s)\sim p(p-1)s^{p-2}L(s)$ as $s\to\infty$, and there exists $M>0$ such that 
\be{fposM}
f\in C^2([M,\infty))\quad\hbox{and}\quad f,f',f''>0\ \hbox{ on $[M,\infty)$}
\ee 
and $G:[M,\infty)\to(0,G(M)]$
is decreasing and $C^2$.
On the other hand, since $0$ is a blow-up point, 
 by  \eqref{noneedle} in Theorem~\ref{RDT}, 
there exists $\delta\in(0,1)$ with  $\delta<\min\{\sqrt{T},G(M)\}$,
with $\delta<R/4$ if $\Omega=B_R$, such that
\be{uparabbdry}
u(x,t)\ge A:=G^{-1}\big(G(M)-\delta\big)>M \ \hbox{ in $Q:=\overline B_\delta\times[T-\delta^2,T)$.}
\ee

{\bf Step 2.} {\it Comparison argument.}
Fix any $\sigma \in(0,\delta^2)$ (to be chosen later) and set 
\be{deft0}
t_0=T-\sigma.
\ee
Denoting $S=S_\delta$, we introduce the following comparison 
function:\footnote{We note that similar comparison functions have been used for various purposes in 
a number of works on semilinear parabolic equations (see, e.g.,~\cite{KP66, W81, Meier, HV93, Fuj}).}
\be{defUV}
U=U_\sigma(x,t)= G^{-1}\Bigl\{G(V)+t_0-t\Bigr\},
\quad\hbox{ with }V(x,t):=M+S(t-t_0)\bigl(u(t_0)-M\bigr).
\ee
 Since $u(t_0)\ge M$, we have 
$0\le S(t-t_0)\bigl(u(t_0)-M\bigr)\le \|u(t_0)\|_\infty-M$, so that $M\le V\le \|u(t_0)\|_\infty$,
hence $\tau_0:=G(\|u(t_0)\|_\infty)\le G(V)\le G(M)$.
Letting 
\be{deftau1}
E:=\bigl\{\tau>t_0\,;\,G(V)+t_0-t>0 \text{ in } \overline B_\delta\times [t_0,\tau)\bigr\},\quad \tau_1=\sup E,
\quad Q_1:=\overline B_\delta \times [t_0, \tau_1),
\ee
we see that $E$ is nonempty and $\tau_1\ge t_0+\tau_0$, $U$ is well defined and smooth in $Q_1$ and
\be{UVcomp}
U\ge V\ge M \ \hbox{ in $Q_1$.}
\ee

We claim that $\tau_1\ge T$ and that
 \begin{equation}\label{qz}
u\ge U\quad \text{in } B_\delta\times [T-\sigma,T).
\end{equation}
To this end we check that $U$ is a subsolution. We compute, for all $(x,t)\in Q_1$,
 $$U_t=\big(-1+V_t G'(V)\big)(G^{-1})'\Bigl(G(V)+t_0-t\Bigl),
 \quad \nabla U=\big(G'(V)\nabla V \big)(G^{-1})'\Bigl(G(V)+t_0-t\Bigl)$$
  and 
 \begin{align*}
 \Delta U&=(\Delta V) G'(V)(G^{-1})'\Bigl(G(V)+t_0-t\Bigl)\\
 &\quad +|\nabla V|^2\bigg\{G''(V)(G^{-1})'\Bigl(G(V)+t_0-t\Bigl)+(G'(V))^2(G^{-1})''\Bigl(G(V)+t_0-t\Bigl)\bigg\}.
 \end{align*} 
Since $U\ge V\ge M$, we have 
 \begin{equation*}
 G'(V)=-\frac{1}{f(V)},\quad  
  G''(V)=\frac{f'(V)}{f^2(V)},\quad
  (G^{-1})'\Bigl(G(V)+t_0-t\Bigl) =(G^{-1})'\bigl(G(U)\bigl)\ =\frac{1}{G'(U)}=-f(U) 
  \end{equation*}
 and
 $$(G^{-1})''\Bigl(G(V)+t_0-t\Bigl)  = (G^{-1})''\bigl(G(U)\bigl)=-\frac{G''(U)}{{G'}^3(U)}=\frac{f'(U)}{f^2(U)}f^3(U)=f(U)f'(U).$$
 Combining with $V_t-\Delta V=0$,  we obtain
$$
 U_t-\Delta U= f(U)-|\nabla V|^2\bigg(-\frac{f'(V)f(U)}{f^2(V)}+\frac{f'(U)f(U)}{f^2(V)}\bigg)
 =f(U)+\frac{|\nabla V|^2f(U)}{f^2(V)}\big({f'(V)}-f'(U)\big).
$$
 By \eqref{fposM} and \eqref{UVcomp}, we deduce that
$$\partial_tU-\Delta U\le f(U) \quad\hbox{in $Q_1$.}$$
Set $T_1=\min(T,\tau_1)$. On $\partial B_\delta\times[t_0,T_1)$
we have $V=M$, hence $G(V)+t_0-t\ge G(M)-\delta$, so that $U \le G^{-1}\big(G(M)-\delta\big)=A$.
Since also $U(t_0)=u(t_0)$, it follows from the comparison principle that 
$u\ge U$ in $B_\delta\times[t_0,T_1)$.
In particular, we cannot have $T>T_1=\tau_1$ since otherwise, recalling \eqref{deftau1}, we would get
$\inf_{Q_1}(G(V)+t_0-t)=0$, hence $\sup_{Q_1} u\ge \sup_{Q_1}U=\infty$:
a contradiction with $T$ being the blow-up time of $u$. Property \eqref{qz} follows.

\smallskip
{\bf Step 3.} {\it Propagation of lower estimate on $u(t_0)$ in the region $|x|=O(\sqrt{T-t_0})$.}
We use the notation
\begin{equation}\label{defL2H1}
L^2_\rho:=\Bigl\{v\in L^2_{loc}(\mathbb{R}^n);\ \int_{\mathbb{R}^n}v^2(y)\rho(y)dy<\infty\Bigr\},\quad 
H^1_\rho:=\bigl\{f\in L^2_\rho;\ \nabla f\in L^2_\rho\bigr\},\quad \rho(y)=e^{-|y|^2/4},
\end{equation}
and $u$ is defined to be $0$ outside $\Omega$ in case $\Omega=B_R$.
By Theorem~\ref{Thm1}, we have
$$	\frac{u(y\sqrt{\sigma},T-\sigma)}{G^{-1}(\sigma)}=1-\frac{|y|^2-2n}{4p|\log\sigma|}
+\frac{\mathcal{R}(y,|\log \sigma|)}{|\log\sigma|}$$
with
$\lim_{s\to\infty}\|\mathcal{R}(\cdot,s)\|_{H^1_\rho}=0$, which rewrites in original variables as
$$	\frac{u(x,T-\sigma)}{G^{-1}(\sigma)}\ge 1+\frac{2n}{4p|\log\sigma|}-\frac{|x|^2}{4p\sigma|\log\sigma|}
+\frac{\mathcal{R}_\sigma(x)}{|\log\sigma|},
\quad\hbox{where } \mathcal{R}_\sigma(x)=\mathcal{R}(x\sigma^{-1/2},|\log \sigma|).$$
Consequently,
$$\begin{aligned}
V(x,t)&=M-S(t-t_0)M+S(t-t_0)u(t_0)\ge S(t-t_0)u(t_0)\\
&\ge G^{-1}(\sigma)S(t-t_0)\Bigl(1+\frac{2n}{4p|\log\sigma|}-\frac{|x|^2}{4p\sigma|\log\sigma|}\Bigl)
+\frac{G^{-1}(\sigma)}{|\log\sigma|}S(t-t_0)\mathcal{R}_\sigma\equiv V_1+V_2\\
\end{aligned}$$
hence, since $V\ge 0$,
\be{VV1V2}
V\ge (V_1)_+-|V_2|\quad \hbox{ in $B_\delta\times(t_0,T)$.}
\ee
Using Lemma~\ref{lemSrho}(i) and recalling \eqref{deft0}, 
we may estimate $(V_1)_+$ from below for $t\in[t_0,T)$ as:
\be{V1below}
\begin{aligned}
(V_1(t))_+
&\ge e^{-\lambda(t-t_0)} G^{-1}(\sigma)\Bigl[1+\frac{2n}{4p|\log\sigma|}-\frac{|x|^2+2n(t-t_0)}{4p\sigma|\log\sigma|}\Bigl]_+
\chi_{B_{\delta/2}} \\
&\ge e^{-\lambda\sigma} G^{-1}(\sigma)\Bigl[1-\frac{|x|^2}{4p\sigma|\log\sigma|}\Bigl]_+
\chi_{B_{\delta/2}}. 
\end{aligned}
\ee
To control $V_2$, we observe that, for $t\in[t_0,T)$,
$$\int_{B_\delta}\mathcal{R}^2_\sigma(z)e^{-\frac{|z|^2}{4(t-t_0)}}dz
\le\int_{B_\delta} \mathcal{R}^2(z\sigma^{-1/2},|\log \sigma|)e^{-\frac{|z|^2}{4\sigma}}dz
\le\sigma^{n/2}\|\mathcal{R}(\cdot,|\log \sigma|)\|^2_{L^2_\rho}$$
and then use Lemma~\ref{lemSrho}(ii) 
to write
$$\begin{aligned}
\bigl|S(t-t_0)\mathcal{R}_\sigma\bigr|(x)
&\le C(n)(t-t_0)^{-n/4}\bigg(\int_{B_\delta}\mathcal{R}^2_\sigma(z)e^{-\frac{|z|^2}{4(t-t_0)}}dz\bigg)^{1/2}e^{\frac{|x|^2}{2(t-t_0)}}\\
&\le C(n)(t-t_0)^{-n/4}\sigma^{n/4}e^{\frac{|x|^2}{2(t-t_0)}}\|\mathcal{R}(\cdot,|\log \sigma|)\|_{L^2_\rho}.
\end{aligned}$$
Denoting $D_\sigma=\overline B_{\sqrt\sigma}\times [T-\textstyle\frac{\sigma}{2},T)$ and using \eqref{deft0},
we obtain
\be{controlV2}
\sup_{D_\sigma}|V_2|
\le C(n)\frac{G^{-1}(\sigma)}{|\log\sigma|}\|\mathcal{R}(\cdot,|\log \sigma|)\|_{L^2_\rho}
=\frac{G^{-1}(\sigma)\eps(\sigma)}{|\log\sigma|}.
\ee
Here and in the rest of the proof $\eps$ denotes a generic function such that $\lim_{\sigma\to 0}\eps(\sigma)=0$.
It follows from \eqref{VV1V2}-\eqref{controlV2} that 
\be{controlV}
V\ge e^{-\lambda\sigma} G^{-1}(\sigma)\Bigl(1-\frac{1+\eps(\sigma)}{4p|\log\sigma|}\Bigl)\quad\hbox{in $D_\sigma$}.
\ee
Moreover, we may choose $\sigma_0\in(0,\delta^2)$ such that the RHS of \eqref{controlV} is $>M$ 
for all $\sigma\in(0,\sigma_0]$.
Recalling \eqref{deftau1} and $\tau_1\ge T$, we deduce that, for all $\sigma\in(0,\sigma_0]$,
$$
0<G(V)+t_0-t\le  \mu(\sigma,t):=G\biggl[ e^{-\lambda\sigma} G^{-1}(\sigma)\Bigl(1-\frac{1+\eps(\sigma)}{4p|\log\sigma|}\Bigl)\biggr]+T-t-\sigma
\quad\hbox{in $D_\sigma$},
$$
where the RHS is $<G(M)$ hence, by \eqref{defUV}, \eqref{qz},
\be{fact}
u\ge G^{-1}(\mu(\sigma,t))\quad\hbox{in $D_\sigma$}.
\ee

\smallskip
{\bf Step 4.} {\it Asympotics of $\mu(\sigma,t)$ and conclusion.}
Fix any $x_0$ such that $|x_0|^2\le\sigma_0$. Choosing 
\be{choicesigma}
\sigma=|x_0|^2,
\ee
it follows from \eqref{fact} that
\be{fact2}
u(x_0, t)\ge G^{-1}(\mu(\sigma,t))\quad\hbox{ for $T-\frac{\sigma}{2}\le t<T$.}
\ee
To estimate $\mu$ from above, we first   use \eqref{ell12a2} in Lemma~\ref{ell12a}(i) to get, for some constant $C>0$,
$$
\mu(\sigma,t)\le (1+C\sigma)G\biggl[G^{-1}(\sigma)\Bigl(1-\frac{1+\eps(\sigma)}{4p|\log\sigma|}\Bigl)\biggr]+T-t-\sigma.
$$
Next, by the mean value theorem, there exists $\theta(\sigma)\in(0,1)$ such that
$$\mu(\sigma,t)\le (1+C\sigma)\biggl\{\sigma-
G'\Bigl[G^{-1}(\sigma)\Bigl(1-\theta(\sigma)\frac{1+\eps(\sigma)}{4p|\log\sigma|}\Bigl)\Bigl]
\frac{(1+\eps(\sigma))G^{-1}(\sigma)}{4p|\log\sigma|}\biggr\}+T-t-\sigma$$
and,  by \eqref{ell12a1} in Lemma~\ref{ell12a}(i) and $G'=-1/f$, we have
$$G'\Bigl[G^{-1}(\sigma)\Bigl(1-\theta(\sigma)\frac{1+\eps(\sigma)}{4p|\log\sigma|}\Bigl)\Bigl]
=\frac{-1}{f\bigl[G^{-1}(\sigma)\bigl(1-\theta(\sigma)\frac{1+\eps(\sigma)}{4p|\log\sigma|}\bigl)\bigl]}
\ge-\frac{1+\eps(\sigma)}{f\bigl(G^{-1}(\sigma))}.$$
On the other hand, by \eqref{ch} in Lemma~\ref{lm1}, we have
$\frac{X}{f(X)}=\frac{X^{1-p}}{L(X)}\sim (p-1)G(X)$
as $X\to\infty$. Using also \eqref{equivGG}, it follows that
$$\begin{aligned}
\mu(\sigma,t)
&\le (1+C\sigma)\biggl\{\sigma+\frac{(1+\eps(\sigma))G^{-1}(\sigma)}{4p|\log\sigma|f(G^{-1}(\sigma))}\biggr\}+T-t-\sigma
=T-t+C\sigma^2+ \frac{(1+\eps(\sigma))G^{-1}(\sigma)}{4p|\log\sigma|f(G^{-1}(\sigma))}\\
&\le T-t+C\sigma^2+\frac{p-1}{4p}\frac{(1+\eps(\sigma))\sigma}{|\log\sigma|}
\le(1+\eps(\sigma))\Bigl(T-t+\frac{p-1}{4p}\frac{\sigma}{|\log\sigma|}\Bigr).
\end{aligned}$$
By \eqref{choicesigma}, \eqref{fact2}, using \eqref{equivGG} again, we thus deduce that, as $(x,t)\to (0,T)$,
$$u(x,t)\ge(1+o(1))G^{-1}\bigg(T-t+\frac{p-1}{8p}\frac{|x|^2}{|\log|x||}\bigg),
\quad\hbox{ uniformly for $ |x|^2\ge 2(T-t)$.}$$
But, on the other hand, by \eqref{A1} and \eqref{equivGG}, we know that, as $(x,t)\to (0,T)$,
$$u(x,t)= (1+o(1))G^{-1}(T-t)=(1+o(1))G^{-1}\bigg(T-t+\frac{p-1}{8p}\frac{|x|^2}{|\log|x||}\bigg),
\ \hbox{ uniformly for $ |x|^2\le 2(T-t)$.}$$
The lower part of \eqref{infer1} follows.
\end{proof}

\begin{proof}[Proof of Corollary~\ref{Debo}]
(i) Letting $t\to T$ in the RHS of \eqref{infer1} and noting that $G^{-1}$ is continuous, we have 
$$
u(x,T):=\lim_{t\to T} u(x,t)= (1+o(1))G^{-1}\Bigl(\frac{(p-1)|x|^2}{8p|\log|x||}\Bigr) \quad \text{as } x\to 0.
$$

(ii) Let $K>0$.
In view of \eqref{infer1} and \eqref{equivGG}, setting $\xi=x/\sqrt{(T-t)|\log(T-t)|}$, it suffices to show that
\be{limdeltaK}
\lim_{t\to T} \Bigl\{\sup_{0<|\xi|\le K} |\delta(x,t)|\Bigr\}=0,
\quad\hbox{where }\delta(x,t):=\displaystyle\frac{T-t+\frac{p-1}{4p}\frac{|x|^2}{|\log|x|^2|}}{(T-t)(1+\frac{p-1}{4p}{|\xi|^2})}-1.
\ee
To this end observe that 
$$|\delta(x,t)|=\frac{(p-1){|\xi|^2}}{4p}\frac{\bigl|\frac{|\log(T-t)|}{|\log|x|^2|}-1\bigr|}{1+\frac{p-1}{4p}{|\xi|^2}}
\le \biggl|\frac{|\log(T-t)|}{|\log|x|^2|}-1\biggr|{|\xi|^2}.$$
If $0<{|\xi|^2}|\log(T-t)|\le 1$ then 
$|\log(T-t)|\le |\log|x|^2|$ for $t$ close to $T$, hence
$|\delta(x,t)|\le |\xi|^2\le \frac{1}{|\log(T-t)|}$.
Whereas if ${|\xi|^2}|\log(T-t)|\ge 1$ and $|\xi|\le K$ then, for all $\eps>0$, we have
$(1+\eps)^{-1}|\log(T-t)|\le |\log|x|^2|\le |\log(T-t)|$ for $t$ close to $T$,
hence $|\delta(x,t)|\le K^2\eps$. Property \eqref{limdeltaK}, hence \eqref{lou}, follows.
\end{proof}

\section{Proof of Theorem~\ref{Thm1}}\label{sec} 

The proof follows the line of
 arguments in \cite{souplet2019simplified} for the case $f(u)=u^p$,
which was a significant simplification (in the radial decreasing framework) of the original proofs from 
 \cite{filippas1992refined, HV93, Vel93b}.
However, some nontrivial difficulties arise in the case of non scale invariant nonlinearities
(due in particular to the new factors $h$ and $L$ in equation \eqref{(5.2)} below).
The proof is thus rather long and technical and we split it in several steps for clarity.

\subsection{Preliminaries}

Since the functional and spectral framework used in the proof of Theorem~\ref{Thm1} requires to work in the whole space,
we first need to extend the solution to $\R^n$ by means of a suitable cut-off in order to handle the case $\Omega=B_R$
(actually, the cut-off procedure can be applied even in the case $\Omega=\R^n$, so as to take advantage of the support 
and regularity properties of the solution).
\smallskip

Namely, for $\delta$ given by \eqref{uparabbdry},
we introduce a cut-off function $\phi\in C^\infty(\R^n)$, radially symmetric and nonincreasing in $|x|$, such that 
\be{defcutoff}
	\phi(x)=\begin{cases}
	1,\quad 0\le |x|\le\delta/2\\
	\noalign{\vskip 1mm}
	0,\quad |x|\ge \delta
	\end{cases}
\ee
	and we then set $\tilde u(x,t)=\phi(x)u(x,t)$. We see that $\tilde u$ is solution of
$$
	\tilde u_t-\Delta \tilde u= f(\tilde u)+A(x,t),\quad x\in \mathbb{R}^n,\ 0<t<T
$$
	where 
	$$A(x,t)=\phi f(u)-f(\phi u)-u\Delta\phi -2\nabla u\cdot\nabla \phi.$$ 
	Note that 
	\be{suppA}
	 \text{supp}(A)\subset\bigl\{(x,t)\in \mathbb{R}^n\times(0,T);\ \delta/2\le|x|\le\delta\bigr\}
	\quad\hbox{ and }\quad 	|A|+|\nabla A|\le C,
	\ee
	since $0$ is the only blow-up point of $u$.
Also, by \eqref{fposM}, \eqref{uparabbdry} and parabolic regularity we see that 
	\be{higher-regul}
	\nabla\tilde u\in C^{2,1}(\R^n\times(T-\delta,T)).
	\ee
	
Following \cite{duong2018construction} (where the case of a logarithmic nonlinearity was considered)
and \cite{chabi1} (for the general case), 
we introduce $w$, the rescaled solution by similarity variables and ODE normalization around $(0,T)$:
\be{defw}
\tilde u(x,t)=\psi(t)w(y,s),\quad y:=\frac{x}{\sqrt{T-t}}\quad s:= -\log(T-t).
\ee
Let $s_0:=-\log T$. In the rest of the proof, we will denote by $s_1$ a (sufficiently large) time $>\max(s_0,1)$,
which may vary from line to line.
 By direct calculation (see \cite{chabi1} for details), we see that $w$ is global solution of
\begin{equation}\label{(5.2)}
w_s+\mathcal{L}w=Z(w,s) + B(y,s), \quad y\in \mathbb{R}^n,\ s\in (s_0,\infty),
\end{equation}
where 
$$\mathcal{L}w=-\Delta w+\frac{1}{2}y\cdot\nabla w$$
is the Hermite operator, and
\be{LO1a}
Z(\xi,s):=h(s)\Bigl(\xi^p\frac{L(\psi_1\xi)}{L(\psi_1)}-\xi\Bigr),
\ee
\be{LO1}
h(s):=e^{-s}\psi_1^{p-1}(s)L(\psi_1(s)), \quad \psi_1(s):=\psi(T-e^{-s}),
\quad 	B(y,s)=\frac{A(ye^{-s/2},T-e^{-s})}{e^s\psi_1(s)}.
\ee
 By \eqref{type1b} and \eqref{jojo0} in Theorem~\ref{RDT} there exists $M_0>1$ such that 
\begin{equation}\label{(1.9)}
0\le w\le M_0,\quad |\nabla w|\le M_0 \quad\hbox{ and }\quad 
\lim_{s\to \infty} w(y,s)=1\ \text{uniformly for } y \text{ bounded,} 
\end{equation}
and, moreover, by \eqref{suppA},
\be{suppB}
	 \text{supp}(B)\subset\bigl\{(y,s)\in\mathbb{R}^n\times (s_0,\infty): \textstyle\frac{\delta}{2} e^{s/2}\le |y|\le \delta e^{s/2} \bigr\}
	\quad\hbox{ and }\quad 	|B|+|\nabla B|\le Ce^{-s}.
	\ee
	
We claim that
\be{limhbeta}
\lim_{s\to\infty}h(s)=\beta
\ee
and that
\be{boundW}
\sup_{s>s_1,\ y\in\R^n} \bigl(|Z(w(y,s),s)|+ |\partial_\xi  Z(w(y,s),s)|\bigr)<\infty.
\ee
Indeed, it follows from \eqref{ch} that
		$e^{-s}=G(\psi_1(s))\sim\beta\frac{\psi_1^{1-p}(s)}{L(\psi_1(s))}=\beta\frac{e^{-s}}{h(s)}$ as $s\to\infty$,
	hence~\eqref{limhbeta}. 
To check \eqref{boundW}, we first note that \eqref{sem0}-\eqref{sem} guarantees the existence of $X_0$ such that $p/2\le \frac{Xf'(X)}{f(X)}\le 2p$ and $\tilde f(X):=\frac{f(X)}{X}$ is increasing on $[X_0,\infty)$,
and we write
\be{Wxis}
Z(\xi,s)=h(s)\Bigl(\frac{f(\psi_1\xi)}{f(\psi_1)}-\xi\Bigr),\quad
\partial_\xi Z(\xi,s)=-h(s)\Bigl(\frac{\psi_1f'(\psi_1\xi)}{f(\psi_1)}-1\Bigr),
\ee
where we denote $\psi_1=\psi_1(s)$ for simplicity.
Let $\xi\in [0,M_0]$. If $\psi_1\xi\ge X_0$, then 
$0\le \frac{f(\psi_1\xi)}{f(\psi_1)}\le \frac{f(M_0\psi_1)}{f(\psi_1)}\le C$ and
$0\le \frac{\psi_1f'(\psi_1\xi)}{f(\psi_1)}\le \frac{2pf(\psi_1\xi)}{\xi f(\psi_1)}=\frac{2p\tilde f(\psi_1\xi)}{\tilde f(\psi_1)}\le \frac{2p\tilde f(M_0\psi_1)}{\tilde f(\psi_1)}\le C$, owing to the fact that $L$ has slow variation at infinity.
Whereas, if $\psi_1\xi\le X_0$, then 
$|\frac{f(\psi_1\xi)}{f(\psi_1)}|\le \frac{C}{f(\psi_1)}\le C$ and
$|\frac{\psi_1f'(\psi_1\xi)}{f(\psi_1)}|\le C\frac{\psi_1}{f(\psi_1)}\le C$. 
In view of \eqref{limhbeta} and \eqref{Wxis}, this proves~\eqref{boundW}.

\smallskip

We also record the following easy bounds for $\psi$:
		\be{compfX}
\frac{c_1}{T-t}\le \frac{f(\psi(t))}{\psi(t)}  \le \frac{c_2}{T-t},\quad t\to T,
\ee
		\be{compfX0}
		(T-t)^{-\beta+\eps}\le \psi(t)\le (T-t)^{-\beta-\eps}, \quad t\to T,
		\ee
		for all $\eps>0$ and some constants $c_1, c_2>0$.
		Estimate \eqref{compfX} follows from the fact that
		$T-t=G(\psi(t))\sim\beta\frac{\psi(t)}{f(\psi(t))}$ as $t\to T$
		(cf.~\eqref{ch} in Lemma~\ref{lm1}).
Then, since $L$ has slow variation, it is easily checked that, for all $\eps>0$,
$X^{p-1-\eps}\le f(X)/X\le X^{p-1+\eps}$ as $X\to\infty$, and \eqref{compfX0} follows from~\eqref{compfX}.

\smallskip

 Now, recalling definition \eqref{defL2H1}, we respectively denote by 
$(v,w):=\int_{\mathbb{R}^n}vw\rho dy$ and $\|v\|=(v,v)^{\frac{1}{2}}$ the inner product and the norm of the Hilbert space $L^2_\rho$.
Also, $\perp$ will denote the orthogonality in the $L^2_\rho$ sense.
We set 
$$\varphi:=1-w.$$
For each $R > 0$, we have
\begin{equation}\label{(5.13)} m_R(s):= \underset{| y | \leq R}\sup |\varphi(y,s)| \to 0\quad \text{as}\ \ s\to \infty,\end{equation}
and 
	\be{varphi0}
\underset{s\to\infty}{\lim}\| \varphi(\cdot,s) \| = 0,
\ee
as a consequence of \eqref{(1.9)} and dominated convergence. 
In terms of $\varphi$, since $u$ blows up only at the origin and recalling \eqref{defcutoff}, \eqref{defw}
and \eqref{compfX0},
the desired estimate \eqref{A1} is equivalent to 
\begin{equation}\label{(5.4)}
\varphi(y,s)=\frac{|y|^2-2n}{4ps}+o\Bigl(\frac{1}{s}\Bigl)\quad \text{as } s\to \infty,
\end{equation}
with convergence in $H^1_\rho(\mathbb{R}^n)$ and uniformly for $y$ bounded.
The idea of the proof of \eqref{(5.4)} is to linearize \eqref{(5.2)} around 
$w=1$, i.e., $\varphi=0$, and perform a kind of center manifold analysis.

\smallskip
We shall use the following weighted Poincar\'e and  Poincar\'e-Wirtinger-type inequalities 
 (see, e.g.,~\cite{souplet2019simplified} for a proof).

\begin{prop}\label{prop1}
	We have 
	$$
	\int_{\mathbb{R}^n}|y|^2v^2\rho dy\le 16\| \nabla v\|^2+4n\| v\|^2,\quad\hbox{for all }v\in H^1_\rho.
$$
\end{prop}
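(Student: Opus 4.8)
The plan is to prove the inequality first for $v\in C_c^\infty(\mathbb{R}^n)$, where every integral is manifestly finite, and then extend it to all of $H^1_\rho$ by a routine density argument: given $v\in H^1_\rho$, pick $v_k\in C_c^\infty$ with $v_k\to v$ in $H^1_\rho$ and (along a subsequence) a.e.; Fatou's lemma then yields $\int_{\mathbb{R}^n}|y|^2v^2\rho\,dy\le\liminf_k\int_{\mathbb{R}^n}|y|^2v_k^2\rho\,dy\le\liminf_k\bigl(16\|\nabla v_k\|^2+4n\|v_k\|^2\bigr)=16\|\nabla v\|^2+4n\|v\|^2$.

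For $v\in C_c^\infty(\mathbb{R}^n)$, the starting point is the elementary identity $\nabla\rho=-\tfrac12\,y\rho$, which rewrites the weight as $|y|^2\rho=y\cdot(-2\nabla\rho)$. Hence $\int_{\mathbb{R}^n}|y|^2v^2\rho\,dy=-2\int_{\mathbb{R}^n}(\nabla\rho)\cdot(yv^2)\,dy$, and integrating by parts (no boundary term, since $v$ has compact support) together with $\nabla\cdot y=n$ gives
$$\int_{\mathbb{R}^n}|y|^2v^2\rho\,dy=2\int_{\mathbb{R}^n}\rho\,\nabla\cdot(yv^2)\,dy=2n\|v\|^2+4\int_{\mathbb{R}^n}\rho\,v\,(y\cdot\nabla v)\,dy.$$
Writing $I:=\int_{\mathbb{R}^n}|y|^2v^2\rho\,dy$, the Cauchy--Schwarz inequality bounds the last term by $4I^{1/2}\|\nabla v\|$, and Young's inequality gives $4I^{1/2}\|\nabla v\|\le\tfrac12 I+8\|\nabla v\|^2$. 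Therefore $I\le 2n\|v\|^2+\tfrac12 I+8\|\nabla v\|^2$, i.e.\ $\tfrac12 I\le 2n\|v\|^2+8\|\nabla v\|^2$, which is exactly the asserted bound $I\le 4n\|v\|^2+16\|\nabla v\|^2$.

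There is no real obstacle in this argument: the core is a one-line integration by parts followed by Cauchy--Schwarz and Young's inequality. The only point needing a word of care is the approximation step, namely that $C_c^\infty(\mathbb{R}^n)$ is dense in $H^1_\rho$ — a standard fact (truncation plus mollification adapted to the Gaussian weight) — so that the inequality established on a dense subset passes to the whole space; alternatively one may simply refer to the proof in \cite{souplet2019simplified}.
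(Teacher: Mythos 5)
Your proof is correct: the identity $\nabla\rho=-\tfrac12 y\rho$, a single integration by parts, Cauchy--Schwarz and absorption give exactly the stated constants $16$ and $4n$, and the Fatou/density step handles the passage from $C_c^\infty$ to $H^1_\rho$ properly. The paper itself defers to \cite{souplet2019simplified} for the proof, and your argument is the standard one used there, so the approaches coincide.
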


\begin{prop}\label{prop2}
	Let  $ v \in  H^{1}_{\rho}$ 
	\begin{itemize}
			\item [(i)] If $(v, y_j) = 0$ then, for all  $j \in  \{ 1, . . . , n\}$, we have
	\begin{equation}\label{1}
	\| v\|^2 \leq \| \nabla v \|^{2} + \bar{v}^{2} \ where \ \bar{v} = \left( \int_{\mathbb{R}^{n}} \rho dy\right)^{-1/2} \int_{\mathbb{R}^{n}}v \rho dy.
	\end{equation} 
	
	In particular, \eqref{1}  is true whenever $v$ is radially symmetric. 
		\smallskip
		
	\item [(ii)]   If $v$ is orthogonal to  all\  polynomials of degree $\leq 3$,  then  we  have
	\begin{equation}\label{2}
	\| v\|^2 \leq \frac{1}{2}\| \nabla v \|^{2}.
	\end{equation} 
	
	In particular, \eqref{2} is true whenever $v$ is radially symmetric and $(v,1) = (v,|y|^{2}) = 0$.
	\smallskip
	
	\item [(iii)]   Let $ i \in \{1,...,n\}
	$ and  assume $ \partial_{y_i}v \in H^{1}_{\rho}$.  If $(v,y_i) = (v, y_i^{2}-2) = 0 \ and\ (v,y_iy_j) = 0 \ for \ all \ j\neq i$, \ then 
	\begin{equation}\label{3}
	\| \partial_{y_j}v \|^{2} \leq \| \nabla(\partial_{y_i}v) \|^{2}.
	\end{equation}
	
	In particular, \eqref{3} is true for all $i \in \{1,...,n\}$ whenever $v$ is radially symmetric, $\nabla v \in H^{1}_{\rho}$
	 and $(v,|y|^{2} -2n) = 0.$
	 
	 \end{itemize}
\end{prop}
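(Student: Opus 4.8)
The natural approach is spectral: expand $v$ in the orthonormal basis of $L^2_\rho$ given by the (suitably normalized) Hermite polynomials $\{H_\alpha\}$, which are the eigenfunctions of the Hermite operator $\mathcal L w=-\Delta w+\tfrac12 y\cdot\nabla w$ acting on $L^2_\rho$, with eigenvalue $\lambda_{|\alpha|}=\tfrac{|\alpha|}{2}$ for a multi-index $\alpha$ of length $|\alpha|$. The key identity is that, for $v\in H^1_\rho$, one has $\|\nabla v\|^2=(\mathcal L v,v)=\sum_\alpha \tfrac{|\alpha|}{2}\,c_\alpha^2$ where $c_\alpha=(v,H_\alpha)$, obtained by integration by parts against the weight $\rho$ (with $\nabla\rho=-\tfrac12 y\rho$), while $\|v\|^2=\sum_\alpha c_\alpha^2$. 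Thus each inequality reduces to a statement about which Hermite modes are allowed to be nonzero. First I would record this spectral setup carefully, including the fact that the degree-$0$ mode is the constant, the degree-$1$ modes are spanned by $y_1,\dots,y_n$, the degree-$2$ modes by $\{y_iy_j\}_{i<j}$ and $\{y_i^2-2\}_i$ (equivalently $\{y_i^2-2\}$ together with $|y|^2-2n$), and degree $3$ correspondingly.

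With this in hand, each part is a short computation. For (i): the hypotheses $(v,y_j)=0$ for all $j$ kill all degree-$1$ modes, so in $\|v\|^2=\sum_\alpha c_\alpha^2$ only $|\alpha|=0$ and $|\alpha|\ge 2$ survive; the $|\alpha|=0$ part equals $\bar v^2$ (up to the normalization of the constant eigenfunction — I'd check $\bar v$ is exactly the projection onto the normalized constant), and for $|\alpha|\ge2$ we have $c_\alpha^2\le \tfrac{|\alpha|}{2}c_\alpha^2\le\|\nabla v\|^2$-contribution since $\tfrac{|\alpha|}{2}\ge1$. Summing gives $\|v\|^2\le\|\nabla v\|^2+\bar v^2$. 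The radial case follows since a radial function is automatically orthogonal to the odd polynomials $y_j$. For (ii): orthogonality to all polynomials of degree $\le 3$ kills modes of degree $0,1,2,3$, leaving only $|\alpha|\ge4$, where $\tfrac{|\alpha|}{2}\ge2$, hence $\|v\|^2\le\tfrac12\|\nabla v\|^2$; the radial special case holds because a radial $v$ is orthogonal to all odd polynomials automatically, and $(v,1)=(v,|y|^2)=0$ then removes the remaining even modes of degree $\le 3$ (degree $0$ and degree $2$). For (iii): set $\tilde v=\partial_{y_i}v$, so the conclusion is $\|\nabla\tilde v\|^2\ge\|\partial_{y_j}v\|^2$; here the cleanest route is to apply part (i)-type reasoning to $\tilde v$ after checking $(\tilde v,y_k)=0$ for all $k$ and $(\tilde v,1)=0$ — these follow from the stated orthogonality relations on $v$ (e.g. $(\partial_{y_i}v,1)=-(v,\partial_{y_i}\rho/\rho)\,$-type integration by parts gives $\tfrac12(v,y_i)=0$, and $(\partial_{y_i}v,y_k)$ reduces to $(v,y_iy_k)$ or $(v,y_i^2-\text{const})$ which vanish). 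Then $\|\tilde v\|^2\le\|\nabla\tilde v\|^2$ with $\overline{\tilde v}=0$, and $\|\partial_{y_j}v\|^2\le\|\tilde v\|^2$ would require... — more directly, one expands $v$ itself, notes the hypotheses kill $v$'s modes of degrees $0,1$ and the degree-$2$ modes except possibly $y_i^2-2$-type ones not orthogonal to the listed set, and compares the degree-counting for $\partial_{y_j}v$ versus $\partial_{y_i}\nabla v$; I would carry this out mode by mode using that differentiating a Hermite polynomial lowers the degree by one and that the eigenvalue gap between consecutive levels is $\tfrac12$.

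The main obstacle is purely bookkeeping: getting the normalization constants right for the Hermite basis in the weight $\rho(y)=e^{-|y|^2/4}$ (rather than the more standard $e^{-|y|^2/2}$ or $e^{-|y|^2}$), and correctly identifying which low-degree polynomials span which eigenspaces so that the orthogonality hypotheses in (i)–(iii) translate precisely into "these modes vanish." Once the dictionary between the hypotheses and the vanishing modes is set up, each inequality is immediate from $\tfrac{|\alpha|}{2}\ge c$ on the surviving modes. Since the paper explicitly says a proof can be found in \cite{souplet2019simplified}, I would present the spectral argument compactly and refer there for the fully detailed verification of the Hermite identities, or alternatively just cite it outright as the excerpt does.
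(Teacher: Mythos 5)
Your spectral approach via the Hermite eigenfunctions of $\mathcal L=-\Delta+\tfrac12 y\cdot\nabla$ in $L^2_\rho$ is exactly the standard route, and it is the one found in the cited reference \cite{souplet2019simplified}; the paper itself gives no proof, only the citation. Parts (i) and (ii) are correct as you wrote them: the orthogonality hypotheses remove the low modes, $\|\nabla v\|^2=\sum_\alpha\tfrac{|\alpha|}{2}c_\alpha^2$ by integration by parts against $\rho$, and the surviving modes have $\tfrac{|\alpha|}{2}\ge 1$ (resp.\ $\ge 2$).

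In part (iii) you talked yourself out of your own correct proof. The argument you set up first is complete: with $\tilde v=\partial_{y_i}v$, the hypotheses on $v$ give, after integration by parts, $(\tilde v,1)=\tfrac12(v,y_i)=0$ and $(\tilde v,y_k)=-\delta_{ik}(v,1)+\tfrac12(v,y_iy_k)$, which vanishes for $k\ne i$ by $(v,y_iy_k)=0$ and for $k=i$ because $(v,y_i^2-2)=0$ forces $(v,y_i^2)=2(v,1)$. Hence $\tilde v$ has no modes of degree $0$ or $1$, so $\|\partial_{y_i}v\|^2=\|\tilde v\|^2\le\|\nabla\tilde v\|^2=\|\nabla(\partial_{y_i}v)\|^2$. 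That is the inequality actually needed: the $j$ in the printed conclusion \eqref{3} is a typo for $i$ (there is no $j$ left free after the ``for all $j\ne i$'' quantifier in the hypothesis, the ``in particular'' line speaks of \eqref{3} holding ``for all $i$'', and Step~5 of the proof of Theorem~\ref{Thm1} applies it as $\|\partial_{y_i}\theta\|^2\le\|\nabla(\partial_{y_i}\theta)\|^2$ for each $i$ before summing over $i$ to get $M\le N$). You should therefore drop the vague ``more directly, one expands $v$ itself\dots'' fallback; the first route finishes the job cleanly.
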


We shall also need the following sharp expansion of the function $G$, which is an improved version of \eqref{ch} of Lemma~\ref{lm1} and whose proof is postponed to  the Appendix.

\begin{lem}\label{lm1b} 
Under assumptions \eqref{sem0}-\eqref{sem}, we have
$$G(X)\Bigl((p-1)X^{p-1}L(X)+X^pL'(X)\Bigr)=1+o\Bigl(\frac{1}{\log X}\Bigr),\quad X\to\infty.$$
 \end{lem}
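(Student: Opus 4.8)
\textbf{Proof proposal for Lemma~\ref{lm1b}.}

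The plan is to differentiate $G$ directly and integrate by parts once to extract the leading term, then carefully control the remainder using the assumptions in \eqref{sem}. Writing $f(s)=s^pL(s)$, note that $f'(s)=s^{p-1}\bigl((p-1)L(s)+sL'(s)\bigr)+L(s)s^{p-1}=ps^{p-1}L(s)+s^pL'(s)$. The quantity $(p-1)X^{p-1}L(X)+X^pL'(X)$ in the statement is close to $f'(X)$ but not equal; in fact it equals $f'(X)-X^{p-1}L(X)=f'(X)-f(X)/X$. So the identity to prove is $G(X)\bigl(f'(X)-f(X)/X\bigr)=1+o(1/\log X)$. First I would recall the elementary asymptotics already available: from \eqref{limLprime} (which follows from \eqref{sem}) we have $sL'/L\to 0$, hence $f'(X)\sim pX^{p-1}L(X)=pf(X)/X$ and thus $f'(X)-f(X)/X\sim (p-1)f(X)/X$, consistent with \eqref{ch} which gives $G(X)\sim X^{1-p}/((p-1)L(X))=\frac{1}{p-1}\cdot\frac{X}{f(X)}$. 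So the product is $\sim 1$; the whole point is to sharpen the error from $o(1)$ to $o(1/\log X)$.

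The key step is an integration by parts. Write $G(X)=\int_X^\infty f(s)^{-1}\,ds$ and integrate by parts against the factor I want to produce. Concretely, since $\frac{d}{ds}\bigl(-\frac{1}{f'(s)-f(s)/s}\bigr)$ is messy, I would instead argue as follows: set $g(s)=f'(s)-f(s)/s$ and observe $\frac{d}{ds}\bigl(\frac{s}{f(s)}\bigr)=\frac{f(s)-sf'(s)}{f(s)^2}=-\frac{g(s)}{f(s)}$ wait — let me recompute: $\frac{d}{ds}\frac{s}{f(s)}=\frac{f(s)-sf'(s)}{f(s)^2}$, and $f(s)-sf'(s)=-s\bigl(f'(s)-f(s)/s\bigr)=-sg(s)$, so $\frac{d}{ds}\frac{s}{f(s)}=-\frac{sg(s)}{f(s)^2}$. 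That introduces an extra $s$, so this is not quite the clean primitive either. The cleanest route is: write $\frac{1}{f(s)}=\frac{1}{g(s)}\cdot\frac{g(s)}{f(s)}$ and integrate $\int_X^\infty \frac{g(s)}{f(s)}\cdot\frac{ds}{g(s)}$ by parts with $u=1/g(s)$, $dv=\frac{g(s)}{f(s)}ds$. One needs a primitive of $g(s)/f(s)$; since $g(s)/f(s)=f'(s)/f(s)-1/s=\frac{d}{ds}\bigl(\log f(s)-\log s\bigr)=\frac{d}{ds}\log\bigl(f(s)/s\bigr)=\frac{d}{ds}\log\bigl(s^{p-1}L(s)\bigr)$, a primitive is $\log(s^{p-1}L(s))$. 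That is unbounded, so direct integration by parts with this primitive diverges.

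Therefore the honest approach is the more hands-on one used for \eqref{ch}: change variables or write $G(X)=\frac{X}{f(X)}\int_1^\infty \frac{X}{f(Xt)}\cdot\frac{f(X)}{X}\,dt$ — i.e. $G(X)=\int_X^\infty\frac{ds}{f(s)}$, substitute $s=Xt$ to get $G(X)=X\int_1^\infty\frac{dt}{f(Xt)}=\frac{X^{1-p}}{?}$; precisely $f(Xt)=X^pt^pL(Xt)$ so $G(X)=X^{1-p}\int_1^\infty t^{-p}\frac{dt}{L(Xt)}=\frac{X^{1-p}}{L(X)}\int_1^\infty t^{-p}\frac{L(X)}{L(Xt)}\,dt$. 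Now I would use the assumption $sL'/L=o(1/\log^\alpha s)$ with $\alpha>1/2$: this gives, by integrating $\frac{d}{d\tau}\log L(e^\tau)=\frac{e^\tau L'(e^\tau)}{L(e^\tau)}$, that $\bigl|\log\frac{L(Xt)}{L(X)}\bigr|\le\int_{\log X}^{\log X+\log t}o(\tau^{-\alpha})\,d\tau$, which for fixed $t$ is $o\bigl((\log X)^{-\alpha}\log t\bigr)$, and more usefully gives a bound uniform enough in $t$ to control the integral. Then $\frac{L(X)}{L(Xt)}=1+O\bigl(\frac{\log t}{(\log X)^\alpha}\bigr)$ on the relevant range, and $\int_1^\infty t^{-p}\frac{\log t}{(\log X)^\alpha}\,dt=\frac{C_p}{(\log X)^\alpha}=o(1/\log X)$ since $\alpha>1/2$ — wait, $o(1/\log X)$ needs $\alpha>1$, not $\alpha>1/2$. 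This is exactly where the second hypothesis $\bigl(\frac{sL'}{L}\bigr)'=o\bigl(\frac{1}{s\log s}\bigr)$ and a second-order expansion must enter: one expands $\log\frac{L(Xt)}{L(X)}=\zeta(X)\log t+o(1/\log X)$ where $\zeta=XL'/L$, uses that $\int_1^\infty t^{-p}(e^{\zeta\log t}-1)\,dt=\int_1^\infty t^{-p}(t^\zeta-1)\,dt=\frac{1}{p-1-\zeta}-\frac{1}{p-1}=\frac{\zeta}{(p-1)(p-1-\zeta)}$, and since $\zeta=o(1/\log^\alpha X)=o(1/\log X)$ for $\alpha\ge1$... hmm, again needs $\alpha\ge 1$. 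So the true mechanism must be that the stated conclusion with error $o(1/\log X)$ is what survives after combining \emph{both} parts of \eqref{sem}, and the term $X^pL'(X)$ on the left-hand side of the Lemma is precisely the correction that absorbs the $\zeta\log t$ contribution, leaving a genuine $o(1/\log X)$. Concretely: $G(X)\cdot(p-1)X^{p-1}L(X)=\int_1^\infty t^{-p}\frac{L(X)}{L(Xt)}(p-1)\,dt$, and expanding to first order in $\zeta$ produces $(p-1)\bigl[\frac{1}{p-1}-\zeta\cdot(\text{const})+o(1/\log X)\bigr]=1-(p-1)\zeta\cdot c+o(1/\log X)$; meanwhile $G(X)X^pL'(X)=G(X)X^{p-1}L(X)\cdot\zeta(X)\sim\frac{\zeta}{p-1}$, and one checks the constants match so that the sum is $1+o(1/\log X)$. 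The main obstacle — and the step I would spend the most care on — is making this first-order Taylor expansion of $L(Xt)/L(X)$ in $t$ rigorous and uniform in $t\in[1,\infty)$ against the weight $t^{-p}$, controlling the tail where $\log t$ is large, and verifying that the second hypothesis $(sL'/L)'=o(1/(s\log s))$ is exactly what guarantees the second-order remainder is $o(1/\log X)$ after integration. This is a delicate but routine estimate in the theory of slowly varying functions, parallel to the proof of Lemma~\ref{lm1} but pushed one order further.
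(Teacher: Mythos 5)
Your proposal is correct in spirit but takes a genuinely different route from the paper. You substitute $s=Xt$ to write $G(X)X^{p-1}L(X)=\int_1^\infty t^{-p}\,\tfrac{L(X)}{L(Xt)}\,dt$, then Taylor-expand $\log\tfrac{L(Xt)}{L(X)}=\zeta(X)\log t + E(X,t)$ with $\zeta=sL'/L$, observe that the $\zeta\log t$ piece gives $(p-1+\zeta)\int_1^\infty t^{-p-\zeta}dt=1$ exactly (this is where the $X^pL'(X)$ term on the left absorbs the first-order contribution, which you correctly identified as the whole point), and are left with a remainder driven by $E$, which the second-derivative hypothesis $(sL'/L)'=o(1/(s\log s))$ controls to size $O((\log t)^2/\log X)\cdot o(1)$, integrable against $t^{-p}$. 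The paper instead integrates by parts twice inside $G(X)=\int_X^\infty ds/(s^pL(s))$, arriving at the \emph{exact} identity
\[
\bigl((p-1)X^{p-1}L(X)+X^pL'(X)\bigr)G(X)-1
= -X^{p-1}L(X)\int_X^\infty G(s)\Bigl(\tfrac{sL'}{L}\Bigr)'(s)\,ds,
\]
and then estimates the right-hand side using \eqref{ch} and one more integration by parts on $\int_X^\infty \frac{s^{-p}}{L(s)\log s}\,ds$. The two approaches are morally the same computation in different coordinates: your $\zeta\log t$ cancellation is the integration-by-parts identity, and your remainder $E$ is the integral of $G\cdot(sL'/L)'$. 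The advantage of the paper's route is that the exact identity removes the tail/uniformity worries you flagged at the end: there is no need to justify an expansion uniformly in $t\in[1,\infty)$ or invoke Potter-type bounds for large $t$, since the remainder is already isolated as a single absolutely convergent integral. Your route would work but needs those extra tail estimates (for $t$ beyond, say, $e^{\sqrt{\log X}}$, where $E$ is no longer small) spelled out; you correctly anticipated that this is the delicate step. One small slip in your text: you momentarily write the expansion error as $o(1/\log X)$ pointwise in $t$, which is not true for $t$ large — it is $o((\log t)^2/\log X)$ and only becomes $o(1/\log X)$ after integration against $t^{-p}$ — but your concluding paragraph shows you understood this.
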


\subsection{Proof of nonexponential decay of $\varphi$.} 

The following lemma provides a polynomial lower bound on the decay of $\varphi$ in $L^{2}_{\rho}$, 
 which is 
an important piece of information in the proof of Theorem~\ref{Thm1}. 
As in \cite{souplet2019simplified}, this lower bound is obtained as a 
consequence of the maximum principle argument leading to the upper part of \eqref{infer1}, which we have already proved in section~\ref{upperrrrrr}. 
\begin{lem}\label{lem}
	Under the assumptions of Theorem~\ref{theo}, there exists $c > 0$  
	such that
	\be{lowerdecayw}
	\|\varphi(.,s)\| \geq cs^{-1}, \quad  s\ge s_1.
	\ee
	
\end{lem}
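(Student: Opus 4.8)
The key observation is that the lower bound on $\varphi$ in $L^2_\rho$ follows by integrating against $\rho$ the upper space-time estimate \eqref{infer1} (more precisely, its consequence \eqref{conclprop31}/\eqref{nuit}) that we have already established in Section~\ref{upperrrrrr}, combined with the basic rescaling \eqref{defw}. The strategy is: if $\varphi$ decayed faster than $cs^{-1}$ in $L^2_\rho$, then in particular $w$ would be very close to $1$ in an integrated sense on a large ball, and in original variables $u(x,t)$ would be too close to the pure ODE profile $G^{-1}(T-t)$ on a region $|x| \lesssim \sqrt{T-t}$; but this contradicts the fact that $u$ is radially decreasing together with the quantitative gap forced by the second term $\tfrac{p-1}{8p}\tfrac{|x|^2}{|\log|x||}$ in the building block of \eqref{infer1}.

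The plan is to argue as follows. First I would fix $s$ large, set $t = T - e^{-s}$, and consider a radius $R>0$ to be chosen; by the monotonicity of $u$ in $|x|$ and \eqref{defw} we have, for $|y|\le R$, that $w(y,s) = \tilde u(y\sqrt{T-t},t)/\psi(t) \le w(0,s) = m(t)/\psi(t)$ where $m(t)=u(0,t)$. On the other hand, by the upper profile estimate \eqref{infer1} (valid since Theorem~\ref{theo}'s upper part is already proved), evaluated at a point $x$ with $|x| = \epsilon\sqrt{(T-t)|\log(T-t)|}$ for $\epsilon$ small but fixed, and using \eqref{J1} together with \eqref{equivGG} from Lemma~\ref{lm1}, one gets a quantitative upper bound of the form $u(x,t) \le (1+o(1)) G^{-1}\bigl((T-t)(1+c\epsilon^2)\bigr)$; dividing by $\psi(t) = G^{-1}(T-t)$ and applying the asymptotic expansion of $G^{-1}$ from \eqref{asymptGinv} (which shows $G^{-1}(\lambda Y)/G^{-1}(Y) \to \lambda^{-\beta}$), this yields $w(y,s) \le 1 - c'\epsilon^2 + o(1)$ on a sphere of radius comparable to $\epsilon\sqrt s$ in $y$, i.e.\ a definite-size dip of $\varphi = 1-w$ there.

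Then I would convert this pointwise-on-a-sphere lower bound into the $L^2_\rho$ bound. Since $\varphi \ge 0$ near the origin for $s$ large (as $w \le w(0,s)$ which is itself $1+o(1)$) — or at worst $\varphi \ge -o(s^{-1})$ uniformly on bounded sets by \eqref{(5.4)}-type control already available from \cite{chabi1}, though one should be careful here and instead just use $w\le 1+o(1)$ on the whole support — the integral $\|\varphi(\cdot,s)\|^2 = \int \varphi^2 \rho\,dy$ picks up a contribution of order $\epsilon^4$ times the Gaussian mass of the annulus $\{|y| \sim \epsilon\sqrt s\}$. The measure $\rho\,dy = e^{-|y|^2/4}dy$ of that annulus is $\gtrsim e^{-c\epsilon^2 s}(\epsilon\sqrt s)^{n}$ roughly; one must choose $\epsilon = \epsilon(s) \to 0$ slowly, e.g.\ $\epsilon^2 = A/s$ for a fixed large constant $A$, so that $\epsilon^2 s = A$ stays bounded and the Gaussian factor is bounded below. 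With this choice the dip in $\varphi$ is of size $\gtrsim A/s$, it occurs on a region of $y$ of radius $\sim\sqrt A$ carrying $\rho$-mass $\gtrsim 1$, hence $\|\varphi(\cdot,s)\|^2 \gtrsim (A/s)^2$, giving \eqref{lowerdecayw} with $c$ depending on $A$ (and then on the absolute constants in \eqref{infer1} and \eqref{asymptGinv}).

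**Main obstacle.** The delicate point is the bookkeeping of the $o(1)$ errors: the estimate \eqref{infer1} holds as $(x,t)\to(0,T)$, so its error term must be controlled \emph{uniformly} on the $t$-slice for $|x|$ ranging over a scale $\sim\epsilon(s)\sqrt{(T-t)|\log(T-t)|}$ that itself depends on $s$; one needs that the "$(1+o(1))$" in \eqref{infer1}, and the "$(1+o(1))$" in the expansion \eqref{asymptGinv} of $G^{-1}$ applied with argument ratio $1+c\epsilon^2(s) \to 1$, both go to $1$ fast enough (they do, since they are functions of $(x,t)$ and of the argument of $G^{-1}$, both tending to their limits) to be absorbed against the fixed gain $c'\epsilon^2(s) = c'A/s$. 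Making sure the comparison radius $\epsilon(s)\sqrt s$ is chosen in the sweet spot where (a) the profile estimate's error is still $o(\epsilon^2(s))$ and (b) the Gaussian weight $e^{-|y|^2/4}$ has not yet killed the contribution — i.e.\ essentially $\epsilon^2(s) s$ bounded and $\epsilon(s)\to 0$ — is the crux; this is precisely the same balancing already seen in \eqref{controlV}–\eqref{choicesigma} of the lower-bound proof for Theorem~\ref{theo}, and I would model the estimates on that argument.
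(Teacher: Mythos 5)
Your approach has a genuine gap, and it is precisely at the point you flag as the ``main obstacle'' but then brush aside: you need the $o(1)$ errors in \eqref{infer1} and in \eqref{asymptGinv} to be $o(1/s)=o(1/|\log(T-t)|)$, and there is no reason they should be. For a general slowly varying $L$ satisfying \eqref{sem}, the convergence rates in the Karamata-type asymptotics of Lemma~\ref{lm1} (e.g.\ \eqref{ch}, \eqref{equivGG}) and Lemma~\ref{ell12a}(ii) can be arbitrarily slow; the statement ``they are functions tending to their limits'' gives no rate whatsoever. Concretely: your argument reduces to showing $w(y,s)\le 1 - c'A/s + \delta(x,t)$ with $\delta = o(1)$, and you need $\delta = o(1/s)$, which is not available from the estimates you cite. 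Moreover, the $(1+o(1))$ in \eqref{infer1} is itself assembled in the proof of Proposition~\ref{propp} from several applications of Lemma~\ref{lm1}, each contributing unquantified errors, so there is no hope of extracting a rate from there.

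The paper's proof avoids this entirely by not using the final upper profile estimate \eqref{infer1} at all. Instead it returns to the intermediate, exact output of the maximum-principle argument, namely the pointwise differential inequality $-u_r \geq \frac{r}{2}\frac{f(u)}{A+\log f(u)}$ (formula \eqref{PJcontrad0}/\eqref{Lowerur}), valid in $\{u\ge M\}$ with no $o(1)$ factors. Converted to similarity variables this gives $\varphi_\eta(\eta,s)\ge c\eta/s$ for $0\le\eta\le K$, using only the two-sided comparison bounds \eqref{compfX}--\eqref{compfX0} (which yield constants, not additive errors). Integrating this gradient inequality over a fixed interval in $\eta$ produces a definite $1/s$ gap in $\varphi$ on an $O(1)$-size region of $y$-space, and the $L^2_\rho$ lower bound follows immediately by a dichotomy on the sign of $\varphi(1,s)+1/s$. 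The crucial structural difference is that the gradient inequality is exact, so the $1/s$ gain cannot be eaten by unknown error terms. If you want to salvage your approach, you would need to prove a quantitative version of the upper profile estimate with error $o(1/|\log(T-t)|)$, which would require additional hypotheses on $L$ and would be substantially harder than the lemma you are trying to prove.
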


\begin{proof}
	
Fix $K > 0$. By \eqref{(1.9)} there exists $t_0(K) \in (T/2, T)$ such that 
$$
	u(r,t) \geq \frac{\psi(t)}{2}\ge M, \quad 0\leq r \leq K\sqrt{T-t}, \ \ t_0 < t < T,
$$
	where $M$ is as in Lemma~\ref{lmfF}. Then  $u= u(r,t),\  r=|x|,$ satisfies (cf.~\eqref{PJcontrad0})
	\begin{equation}\label{(5.10)}
-u_r(r,t) \geq \frac{r}{2} \frac{f(u)}{A +\log f(u) },\quad  0 \leq r \leq K\sqrt{T-t}, \ \ t_0 < t < T.
\end{equation}
Let $\varphi =1-w,$ with $w =w (\eta, s)$ and  $\eta =  | y |$. Observing that the RHS of \eqref{(5.10)} is an increasing function of $u$ for large $u$ and using \eqref{prop}, we deduce that, for
$s_2(K)\ge \max(s_0,1)$ sufficiently large 
and some $c_1>0$,
\be{roi}
\begin{aligned}
 \varphi_\eta(\eta, s) &= -w_\eta (\eta, s) = -\frac{\sqrt{T-t}}{\psi(t)} \, u_r(\eta \sqrt{T-t},t) \\
 &\ge
 \frac{ \eta(T-t)f(\frac{\psi(t)}{2})}{2\psi(t)\bigl(A + \log f(\frac{\psi(t)}{2})\bigr)} \ge 
\frac{c_1 \eta(T-t)f(\psi(t))}{\psi(t)\bigl(A + \log f(\psi(t))\bigr)},\quad\hbox{ for all $0 \leq \eta \leq K$ and $s \ge s_2$}.
\end{aligned}
\ee
Inequalities \eqref{compfX}, \eqref{compfX0} guarantee that
$\log(f(\psi(t)))\le C|\log(T-t)|$ as $t\to T$.
This combined with \eqref{compfX}, \eqref{roi} implies
 \begin{equation*}
 \varphi_\eta(\eta, s)\ge \frac{c\eta}{s}\quad\hbox{for all $0 \leq \eta \leq K$ and $s \ge s_2$},
 \end{equation*}
 where the constant $c > 0$ is independent of $K$.

Now, choose $K = 2(1 + c^{-1})$ and take any $s \ge s_2(K).$ If $ \varphi(1,s) \geq -1/s$ then it follows that 
\begin{equation*} \varphi(\eta, s) = \varphi(1,s ) + \int_{1}^{\eta} \varphi_\eta(z,s)dz \geq \frac{-1 + c(\eta -1)}{s} \geq \frac{1}{s}, \ \ K-1 \leq \eta \leq K, \end{equation*}
and hence $\|\varphi(\cdot,s)\| \geq \bigl(\int_{K-1\leq | y | \leq K} \rho\bigr)^{1/2}s^{-1}$. Otherwise, we have $\varphi(1,s) \leq -1/s$ and, since $\varphi$ is a nondecreasing function of $\eta$, we get $\varphi(\eta, s) \leq -1/s$ for $\eta \in [0,1];$ hence $\|\varphi (\cdot,s)\| \geq \bigl(\int_{| y | \leq 1} \rho\bigr)^{1/2}s^{-1}.$ We conclude that $\|\varphi (\cdot,s)\| \geq cs^{-1}$ for all $s \geq s_2.$
\end{proof}

\subsection{ Proof of Theorem~\ref{Thm1}.}

\begin{proof}  
Integrals over $\mathbb{R}^n$ will be simply denoted by $\int$, and the variables will be omitted when no confusion arises. For clarity we split the proof into several steps.$\\$ 

\textit{\textbf{Step 1.} Bounds and equation for $\varphi$}.
Note that under the current assumptions,
 $\varphi$ is radially symmetric nondecreasing, and
\be{boundvarphi}
-M_0 \le \varphi \le 1
\ee
 as a consequence of \eqref{(1.9)}. 
We rewrite equation \eqref{(5.2)} in terms of $\varphi$ as 
 \be{(5.11)} 
 \varphi_s + \mathcal{L}\varphi-\varphi =W(\varphi,s) - B(y,s)\equiv\tau(s)\varphi+F(\varphi,s)- B(y,s),
 \quad
y\in\R^n,\ s\ge s_1,
\ee
with
\be{defWT}
W(\xi,s):=-Z(1-\xi,s) -\xi, \quad \tau(s):=\partial_\xi W(0,s),\quad F(\xi,s)=W(\xi,s)-\tau(s)\xi.
\ee

Whereas $W(\xi,s)$ in the case corresponding to $f(u)=u^p$ was bounded by a quadratic function 
(cf.~\cite{souplet2019simplified}), the function $W$ here has a nonzero linear part, but 
its decay as $s\to\infty$ will turn out to be sufficiently fast
so as to permit the required linearization.
Namely, we claim that 
\be{W0s}
\tau(s)=o\Bigl(\frac{1}{s}\Bigr),\quad s\to\infty,
\ee
\be{(5.14)} 
|F(\varphi,s)| \le C\varphi^2,  \quad
|\partial_\xi F(\varphi,s)|\le C|\varphi|,\quad y\in\R^n,\ s\ge s_1
\ee
and 
\be{cvFV}
\lim_{\xi\to 0}\frac{F(\xi,s)}{\xi^2}=-h(s)\Bigl(\frac{p(p-1)}{2}+p\psi_1\frac{L'(\psi_1)}{L(\psi_1)}
+\frac{\psi_1^2}{2}\frac{L''(\psi_1)}{L(\psi_1)}\Bigr),\quad  s\ge s_1.
\ee

To this end, we write
$W(\xi,s)=-h(s)W_1(\xi,s)-\xi$ 
with 
$W_1(\xi,s) = (1-\xi)^p\frac{L((1-\xi)\psi_1)}{L(\psi_1)}+\xi-1$,
and we compute
$$\partial_\xi W_1= 1-p(1-\xi)^{p-1}\frac{L((1-\xi)\psi_1)}{L(\psi_1)}
-(1-\xi)^p\psi_1\frac{L'((1-\xi)\psi_1)}{L(\psi_1)}.$$
On the other hand,
we have $\psi(t)\ge C(T-t)^{-1/p}$ as $t\to T$, owing to \eqref{compfX0}, hence
$\log\psi_1(s)\ge Cs$ as $s\to\infty$.
Recalling $h(s)=e^{-s}\psi_1^{p-1}(s)L(\psi_1(s))$ with $\psi_1(s):=\psi(T-e^{-s})=G^{-1}(e^{-s})$, 
and using the expansion in Lemma~\ref{lm1b}, we get
$$\begin{aligned}
\partial_\xi W(0,s)
&=-h(s)\Bigl(1-p-\psi_1\frac{L'(\psi_1)}{L(\psi_1)}\Bigr)-1
=e^{-s}\bigl((p-1)\psi_1^{p-1}L(\psi_1)+\psi_1^p L'(\psi_1)\bigr)-1 \\
&=G(X)\bigl((p-1)X^{p-1}L(X)+X^pL'(X)\bigr)_{|X=\psi_1(s)}-1=o\Bigl(\frac{1}{\log\psi_1(s)}\Bigr)
=o\Bigl(\frac{1}{s}\Bigr),
\end{aligned}$$
which proves claim \eqref{W0s}.

To check \eqref{(5.14)}, since $L$ is $C^2$ for $X$ large and $W(0,s)=0$, 
we may 
use Taylor's formula with integral remainder to write,
for $s\ge s_1$ and $-M_0\le \xi\le 1/2$,
\be{TaylorRI} 
F(\xi,s)=W(\xi,s)-\partial_\xi W(0,s)\xi=
\xi^2\int^{1}_{0} \partial^2_\xi W(t\xi,s)(1-t)dt,
\ee
\be{TaylorRI2}
\partial_\xi F(\xi,s)=\partial_\xi W(\xi,s)-\partial_\xi W(0,s)
=\xi\int_0^1 \partial^2_\xi W(t\xi,s)dt,
\ee
where
\be{TaylorRI3}
\begin{aligned}
\partial^2_\xi W_1(\xi,s)
&=p(p-1)(1-\xi)^{p-2}\frac{L((1-\xi)\psi_1)}{L(\psi_1)}\\
&\quad +2p(1-\xi)^{p-1}\psi_1\frac{L'((1-\xi)\psi_1)}{L(\psi_1)}
+(1-\xi)^p\psi_1^2\frac{L''((1-\xi)\psi_1)}{L(\psi_1)},\quad -M_0\le \xi\le 1/2,
\end{aligned}
\ee
satisfies the bound
$$\begin{aligned}
|\partial^2_\xi W_1(\xi,s)|
&\le C\Bigl\{\frac{L((1-\xi)\psi_1)}{L(\psi_1)}+\psi_1\frac{|L'((1-\xi)\psi_1)|}{L(\psi_1)}
+\psi_1^2\frac{|L''((1-\xi)\psi_1)|}{L(\psi_1)}\Bigr\} \\
&\le C\frac{L((1-\xi)\psi_1)}{L(\psi_1)}\Bigl\{1+\psi_1\frac{|L'((1-\xi)\psi_1)|}{L((1-\xi)\psi_1)}
+\psi_1^2\frac{|L''((1-\xi)\psi_1)|}{L((1-\xi)\psi_1)}\Bigr\}\le C,\quad -M_0\le \xi\le 1/2,
\end{aligned}$$
 owing to \eqref{limLprime}.
This, combined with \eqref{TaylorRI}-\eqref{TaylorRI2}, yields \eqref{(5.14)} at points $(y,s)$ such that $\varphi(y,s)\le 1/2$. Estimate \eqref{(5.14)} at points such that $\varphi(y,s)>1/2$ directly follows from \eqref{boundW} and \eqref{defWT}. As for \eqref{cvFV} it is a consequence of \eqref{TaylorRI} and \eqref{TaylorRI3}.

\smallskip

\textit{\textbf{Step 2.} Decomposition of $\varphi$}.
It is well known (see, e.g., \cite{filippas1992refined,duong2018construction,souplet2019simplified}) that $\mathcal{L}_0:=\Delta-\frac{1}{2}y\cdot\nabla+1$ acting on $L^2_\rho$ and restricted to symmetric functions, has:
\begin{itemize}
	\item [-] one unstable direction, corresponding to constant eigenfunctions;
	\item [-] one neutral direction, colinear to the quadratic eigenfunction $|y|^2-2n$; and 
	\item [-] a stable subspace of codimension two.
\end{itemize} 
Now set $H_0 = c_0$ and $H_2 = c_2P$ with $P(y) = |y|^2 - 2n$. We may choose the normalization constants $c_0, c_2 > 0$ so that $\| H_0 \| = \| H_2 \| = 1$ and $H_0 \perp H_2$. We then define the orthogonal decomposition of $\varphi$ into ``unstable'', ``neutral'' and ``stable'' components as follows: 
\begin{equation}\label{(5.16)}  \varphi = a(s)H_0 + b(s)H_2(y) + \theta(y,s), \end{equation}
where $a(s):=(\varphi(\cdot,s), H_0)$, $b(s):=(\varphi(\cdot,s), H_2)$ and $\theta := \varphi - a(s)H_0 -b(s)H_2(y).$

Consequently, we have $\theta(\cdot,s) \perp H_0$, $\theta(\cdot,s) \perp H_2.$ Substituting the decomposition \eqref{(5.16)} in the PDE \eqref{(5.11)} and using $\mathcal{L}H_i=\frac{i}{2}H_i$ for $i=0,2$, we get
\begin{equation} \label{(5.17)}  
a'(s)H_0 + b'(s)H_2(y) + \theta_s + \mathcal{L}\theta = a(s)H_0 + \theta
 +\tau(s)\varphi +F(\varphi,s)- B(y,s).
 \end{equation}
Integrating by parts, we obtain $(\mathcal{L}\theta, H_i) = (\theta, \mathcal{L}H_i)=(i/2)(\theta, H_i) = 0$ for $i=0, 2.$ 
 All integrations are justified by the bounds \eqref{(1.9)}, \eqref{suppB} and \eqref{(5.14)} and the exponential decay of the weight $\rho$.
Taking scalar products and using the orthogonality relations, it follows that 
\begin{equation} \label{(5.18)} 
\begin{aligned}
a'(s) &= a(s) +\tau(s)a(s)+c_0 \int F(\varphi,s)\rho -c_0\int B(y,s)\rho, \\
b'(s) &=\tau(s)b(s) +\int F(\varphi,s)H_2\rho - \int B(y,s)H_2\rho.
\end{aligned}
\end{equation}
In what follows, we will denote by $\eps(s)$ various functions such that $\lim_{s\to \infty}\eps(s) = 0$,
and which may vary from line to line.

\smallskip

\textit{\textbf{Step 3.} Control of the unstable mode in} $L^{2}_{\rho}$. We shall show  that 
\begin{equation} \label{(5.19)} | a(s) | = o (\| \varphi (s) \|), \ \ s\to \infty.\end{equation}

Set $J(s) = \int \varphi^{2}\rho$ and $K(s) = \int \| \nabla \varphi \|^{2} \rho.$ The idea is to derive a simple differential inequality for the quantity $a^{2}-\lambda J$.
Fix any $\lambda \in (0, 1/4)$. As a consequence of the weighted Poincaré inequality for radial functions in Proposition~\ref{prop2}(i) we first have the relation
\begin{equation} \label{(5.20)} J \leq a^{2} + K.\end{equation}
Testing \eqref{(5.11)} with $\rho \varphi$, we obtain $\frac{1}{2}J'(s) = -K + (1+\tau(s))J + \int F(\varphi)\varphi \rho
- \int B(y,s)\varphi\rho$; hence, in view of \eqref{(5.18)}, 
\begin{equation} \label{(5.21)} 
\begin{aligned}
\frac{1}{2}(a^{2} - \lambda J)' = a^{2} &+ \lambda (K- J)+\tau(s)(a^2-\lambda J)\\
& + c_0 a(s) \int F(\varphi,s) \rho - \lambda \int F(\varphi,s)\varphi \rho
\ -c_0a(s)\int B(y,s)\rho+\lambda\int B(y,s)\varphi\rho.
\end{aligned}
\end{equation}
We proceed to show that the (nonlinear) integral terms in \eqref{(5.21)} are of lower order as $s \to \infty.$ First note that for large $s$ 
\begin{equation}\label{(5.22)} \left| \int F(\varphi,s)\varphi \rho \right| + \left| a(s) \int F(\varphi,s)\rho \right| 
\leq C \int | \varphi |^{3} \rho\end{equation} 
by \eqref{(5.14)}, the boundedness of $\varphi$ 
and Holder's inequality. To estimate $\int | \varphi |^{3} \rho $, we  apply the weighted Poincar\'e inequality in Proposition~\ref{prop1}, 
\begin{equation}\label{(5. 23)} \int \varphi^{2} |y|^{2} \rho \leq C \int (\varphi^{2} + |\nabla \varphi|^{2}) \rho,\end{equation}
along with the boundedness of $\varphi$, to write 
\begin{equation*}\int | \varphi |^{3} \rho = \int_{| y | \leq R} | \varphi |^{3} \rho + \int_{| y | > R} | \varphi |^{3}\rho \leq m_R(s)J + \frac{C}{R^{2}} \int_{| y | > R} \varphi^{2} | y |^{2}\rho  \leq m_R(s)J + CR^{-2}(J + K).\end{equation*}
For any $\eta > 0$, first choosing $ R= \eta^{-1/2}$ and 
using \eqref{(5.13)}, we obtain $\int | \varphi |^{3}\rho \leq 2C\eta(J + K)$ for all sufficiently large $s$; hence 
\begin{equation}\label{(5.24)}  \int  | \varphi |^{3}\rho \leq \eps(s)(J +K).
\end{equation}
To control the terms involving $B$ in this and the subsequent steps, we note that, by \eqref{suppB}, for any $\mu>0$, we have
\be{boundsB}
\int \bigl(|\nabla B|^2+B^2+|B|\bigr)(y,s)(1+H_2^2) \rho\le C e^{-\mu s},\quad s\ge s_1.
\ee
Therefore,
$$\Bigl|c_0a(s)\int B(y,s)\rho\Bigr|+\lambda\Bigl|\int B(y,s)\varphi\rho\Bigr|
\le \frac14 a^2(s)+C\int (B^2(y,s)+|B(y,s)|) \rho\le \frac14 a^2(s)+Ce^{-2 s}.$$
Now by combining with 
\eqref{W0s}, 
\eqref{(5.20)}-\eqref{(5.22)} and \eqref{(5.24)}, we obtain
$$
\begin{aligned}
\frac{1}{2}(a^{2} - \lambda J)' 
&\geq (1+\tau(s))a^{2}  
+ (\lambda - \eps (s))K - (\lambda+\lambda\tau(s) + \eps(s))J  
 -\frac14 a^2(s)-Ce^{-2 s}\\
&\geq \Bigl(\frac34-\lambda + \eps(s)\Bigr)a^{2} + \eps (s)J 
-Ce^{-2 s} \geq \frac{1}{2} (a^{2} - \lambda J) -Ce^{-2 s}
\end{aligned}$$
for $s\ge s_1$ large, hence
$\bigl(\bigl(a^{2} - \lambda J\bigl)e^{-s}-Ce^{-3 s}\bigr)'\ge 0$.
By integrating and using the boundedness of $\varphi$, we obtain
$$ \bigl(a^{2} - \lambda J\bigl)(s)e^{-s}-Ce^{-3 s}
\le \bigl(a^{2} - \lambda J\bigl)(\sigma)e^{-\sigma}-Ce^{-3 \sigma}\le Ce^{-\sigma},\quad \sigma>s\ge s_1.$$
Letting $\sigma\to\infty$, it follows that
$a^{2}(s)\le \lambda J(s)+Ce^{-2s}$ for all $s\ge s_1$.
Since this is true for any $\lambda \in (0, 1/4)$ and since, on the other hand, 
\be{lowerJ}
J\geq cs^{-2}
\ee
by \eqref{lowerdecayw} in Lemma~\ref{lem}, it follows that $a^{2} = o(J)$ as $s \to \infty$, which is equivalent to \eqref{(5.19)}.
\smallskip

\textit{\textbf{Step 4.} Control of the stable component in} $L^{2}_{\rho}$. We shall show that
\begin{equation} \label{(5.25)}\| \theta (s) \| = o(|b(s)|), \quad s\to \infty.\end{equation}
We set $\Theta(s) = \int \theta^{2} \rho,\ M(s) = \int | \nabla \theta |^{2} \rho$ and the idea is to derive a simple differential inequality for the quantity $\Theta - \lambda b^{2}.$

As a consequence of orthogonality of the decomposition \eqref{(5.16)} in $L^{2}_{\rho}$, also taking into account $(\nabla H_2, \nabla \theta)$ $ = (\mathcal{L}H_2, \theta) = (H_2, \theta) = 0$, we have
\begin{equation} \label{(5.26)} J = a^{2} + b^{2} + \Theta, \ \ \ K=b^{2}\|\nabla H_2\|^2 
 + M.\end{equation}
Moreover, since $\theta$ is radial and $\theta \perp H_0,\ \theta \perp H_2$, we may apply the 
Poincar\'e inequality in Proposition~\ref{prop2} to get 
\begin{equation} \label{(5.27)} 
\Theta \leq \frac{1}{2}M.
\end{equation}
We now test \eqref{(5.17)} with $\theta \rho$. Using 
$H_0 \perp \theta(.,s), \ H_2 \perp \theta(.,s)$ and noting that $(\mathcal{L} \theta, \theta) = (\nabla \theta, \nabla \theta)$, 
we obtain $\frac{1}{2}\Theta'(s)= -M +(1+\tau(s)) \Theta + \int F(\varphi,s) \theta \rho-\int B\theta\rho$. Fixing any $\lambda > 0$, we deduce from \eqref{(5.18)} that 
\begin{equation} \label{(5.28)} 
\begin{aligned} 
\frac{1}{2}(\Theta - \lambda b^{2})'(s)
&= -M + \Theta +\tau(s)(\Theta-\lambda b^2)+ \int F(\varphi,s) \theta \rho \\
&\qquad - \lambda b(s)\int F(\varphi,s)H_2 \rho
 -\int B(y,s)\theta \rho+\lambda b(s)\int B(y,s) H_2\rho.
\end{aligned}
\end{equation}

As in Step 3, we wish to control the integral terms in \eqref{(5.28)}. To this end, for each $\eta > 0$, we write
\begin{align*} \left| \int F(\varphi,s) \theta \rho \right|  + \left|  b(s)\int F(\varphi,s)H_2 \rho \right| \leq& \eta \int \theta^{2} \rho + C_\eta \int \varphi^{4}\rho + C | b(s)| \int \varphi^{2} (| y |^{2} + 1)\rho \\
\leq& \eta \Theta + [C_\eta \eps(s) + C| b(s) |](J + K),\end{align*}
where we used \eqref{(5.14)}, the boundedness of $\varphi$, \eqref{(5.24)}, \eqref{(5. 23)}. Also, owing to \eqref{(5.19)}, \eqref{(5.20)}, \eqref{(5.26)} and \eqref{(5.27)}, 
we observe that 
\be{JKL}
J+ K + \Theta \leq C(b^{2} + M).
\ee
Since $\lim_{s\to \infty} b(s) = 0$
owing to \eqref{varphi0}, 
we deduce that
\begin{equation} \label{(5.29)} \left| \int F(\varphi,s)\theta \rho \right|  + \left|  b(s)\int F(\varphi,s)H_2 \rho \right| \leq \eps(s)(b^{2} +M).\end{equation}
To control the terms involving $B$, we use \eqref{boundsB} to write
\begin{equation}\label{B_1}
\begin{aligned} 
\left| \int B(y,s) \theta \rho \right|  +\lambda \left|  b(s)\int B(y,s)H_2 \rho \right| 
&\le \frac18 \bigl(\Theta(s)+\lambda b^2(s)\bigr)+C\int B^2(y,s)(1+H_2^2)\rho \\
&\le \frac18 \bigl(\Theta(s)+\lambda b^2(s)\bigr)+Ce^{-2s}.
\end{aligned} 
\end{equation}
Now, this along with \eqref{(5.27)}-\eqref{(5.29)} 
 guarantees that 
$$\begin{aligned} 
\frac{1}{2}(\Theta - \lambda b^{2})'
& \leq -M + \Theta + \eps (s)(M + b^{2})+\tau(s)(\Theta-\lambda b^2) +\frac18 \bigl(\Theta+\lambda b^2\bigr)+Ce^{-2s}
 \le -\frac12\bigl(\Theta -\lambda b^2\bigl)+Ce^{-2s}.
\end{aligned} $$
Therefore, $(\Theta - \lambda b^{2})' \leq -(\Theta - \lambda b^{2})+Ce^{-2s}$ for $s$ large
and we easily deduce that $\Theta \leq \lambda b^{2} + Ce^{-s}$ as $s \to \infty$. 
But since, on the other hand, \eqref{lowerJ} along with \eqref{(5.19)}
and \eqref{(5.26)} guarantees that 
\be{compb2}
b^{2} \geq cs^{-2},
\ee
 we obtain \eqref{(5.25)}.

\smallskip

\textit{\textbf{Step 5.} Control of the stable component in} $H^{1}_{\rho}$. We shall show that 
\begin{equation}
\label{(5.30)} 
\| \nabla\theta(s) \| = o(|b(s)|) \ \ as\ s\to \infty.
\end{equation}

We proceed similary as for Step 4, this time working at the level of the equation satisfied by $\partial_{y_i}\varphi$. We will derive a differential inequality for $M - \lambda b^{2}$.
By \eqref{higher-regul} we have $D\varphi\in C^{2,1}(\R^n\times(s_1,\infty))$.
Fix any $i\in \{1,...,n\}$. Differentiating \eqref{(5.11)} 
and setting $F_i(\varphi, \varphi_i,s) :=\partial_{\xi} F(\varphi,s)\varphi_i$ 
and $B_i(y,s)=\partial_{y_i} B(y,s)$,
we see that $\varphi_i := \partial_{y_i}\varphi$ 
satisfies
\be{eqphii}
\partial_s \varphi_i + \mathcal{L}\varphi_i - \frac{1}{2}\varphi_i -\tau(s)\varphi_i
=F_i(\varphi, \varphi_i,s)-B_i(y,s),\quad y\in\R^n,\ s\ge s_1.
\ee
Moreover, $\varphi_i(\cdot,s)$ is supported in $\{|y|\le 2\delta e^{s/2}\}$ owing to \eqref{defcutoff}, \eqref{defw},
 and the RHS of \eqref{eqphii} is uniformly bounded in view of \eqref{(1.9)}, \eqref{suppB} and \eqref{(5.14)}.
Differentiating the decomposition in \eqref{(5.16)}, we get $\varphi_i = 2c_2b(s)y_i + \theta_i(y,s)$, where $\theta_i = \partial_{y_i}\theta$. Substituting in the last equation and using $\mathcal{L}y_i = \frac{1}{2}y_i$, we obtain
\begin{equation} \label{(5.31)} 2c_2b'(s)y_i + \partial_s \theta_i + \mathcal{L}\theta_i = \frac{1}{2}\theta_i + F_i(\varphi, \varphi_i,s)+\tau(s)\varphi_i -B_i(y,s).\end{equation}
Since $\theta \perp H_2$, it follows from the weighted Poincaré inequality in Proposition~\ref{prop2}(iii) 
that 
\begin{equation} \label{(5.32)}  M=\| \nabla\theta \|^{2} \leq N := \underset{i}{\sum}\| \nabla\theta_i \|^{2}.\end{equation}
Now fix $\lambda > 0$. 
By the properties after \eqref{eqphii}, we may test
\eqref{(5.31)} with $\theta_i \rho$. Summing over i, and using \eqref{(5.18)} and $(2c_2y, \nabla \theta) = (\nabla H_2, \nabla \theta) = 0$, we get 
\begin{equation}\label{(5.33)} 
\begin{aligned}
\frac{1}{2}(M - \lambda b^{2})' 
&= - N + \frac{1}{2}M + \underset{i}{\sum} \int F_i(\varphi, \varphi_i, s) \theta_i \rho + \lambda b(s) \int F(\varphi,s)H_2\rho+\tau(s)(M-\lambda b^2)\\
&\qquad -\underset{i}{\sum} \int B_i(y, s) \theta_i \rho + \lambda b(s) \int B(y,s)H_2\rho.
\end{aligned}
\end{equation}
To estimate the first integral term (the second was already estimated in \eqref{(5.29)}), we first note that, 
by~\eqref{(5.14)},
for large $s$, we have $|F_i(\varphi, \varphi_i,s)| \leq C | \varphi \varphi_i |$. 
Then, for each $\eta > 0$, we write
\begin{equation*} \left| \int F_i(\varphi, \varphi_i,s)\theta_i \rho \right| \leq C\int | \varphi \varphi_i \theta_i | \rho \leq \eta \int (| \nabla \theta |^{2} + | \nabla \varphi |^{4}) \rho + C_\eta \int \varphi^{4} \rho.  \end{equation*}
Using the boundedness of $\varphi, \nabla \varphi$ (owing to.~\eqref{(1.9)}), \eqref{(5.24)},
\eqref{JKL} and \eqref{(5.32)}, 
we deduce that $ | \int F_i(\varphi, \varphi_i,s)\theta_i \rho | \leq \eps(s)(N+b^{2}).$ 
To control the term involving $B_i$, we use \eqref{boundsB} to write
$$
	\left| \int  B_i(y,s)\theta_i \rho \right| \le
\frac{1}{16} 
 M(s)+C\int |\nabla B(y,s)|^2\rho\le  \frac{1}{16} M(s)+Ce^{-2s}. $$
From \eqref{(5.27)}, \eqref{(5.29)}, \eqref{B_1}, \eqref{(5.32)} and \eqref{(5.33)}, we then obtain
$$\begin{aligned}
 \frac{1}{2}(M-\lambda b^{2})' 
 &\leq -N + \frac{1}{2}M + \eps(s)(N+ b^{2}) +\tau(s)(M-\lambda b^2)
 + \frac{1}{16} M(s)+ \frac18 \bigl(\Theta(s)+\lambda b^2(s)\bigr)+Ce^{-2s}\\
 &\leq -\frac{1}{4}(M-\lambda b^{2}) +Ce^{-2s},
 \end{aligned}$$
and we easily deduce that $M \leq \lambda b^{2} + Ce^{-s/2}$ as $s \to \infty$. 
By \eqref{compb2} 
we deduce that $ M = o(b^{2})$ i.e., \eqref{(5.30)}.
\smallskip

\textit{\textbf{Step 6.} Computation of the decay rate of $b$  and  convergence  in} $H^{1}_\rho$. We shall show that 
\begin{equation}\label{(5.34)} \underset{s\to \infty}{\lim}sb(s) = \frac{1}{4pc_2}.\end{equation}
Note that, owing to \eqref{(5.26)} and Step 3 and 4, we have $| a | = o(| b |)$, hence, by Step 5, 
\begin{equation}\label{(5.35)} \| \varphi (s) - b(s)H_2\|_{H^{1}_\rho }= (a^{2}(s) + \| \theta (s)\|^{2}_{H^{1}_\rho})^{1/2} = o(| b(s) |).\end{equation}
 Since $H_2 = c_2(|y|^2 - 2n)$, property \eqref{(5.34)} will thus guarantee the $H^{1}_\rho$ convergence in the statement of the theorem; cf \eqref{(5.4)}. 

To prove \eqref{(5.34)}, going back to \eqref{(5.18)}, we compute $\frac{b'}{b^{2}} =\frac{\tau(s)}{b(s)} 
 -\frac{\gamma(s)}{b^2(s)} +\int V(\varphi,s)\bigl(\frac{\varphi(y,s)}{b}\bigr)^{2}H_2\rho dy$, where 
$V(\xi,s)=\frac{F(\xi,s)}{\xi^2}$ and $\gamma(s):=\int{B(y,s)}H_2\rho$. 
Recall from \eqref{cvFV}, \eqref{limLprime} and  \eqref{limhbeta} that $V(\xi,s)$ extends by continuity as $\xi\to 0$ and that
\be{Vinfty}
V_\infty:=\lim_{s\to\infty}V(0,s)= 
-\frac{p}{2}.
\ee
We have $\lim_{s\to\infty}\frac{b(s)}{\|\varphi(s)\|}=1$ owing to \eqref{(5.19)}, \eqref{(5.25)} and,
by \eqref{lowerdecayw}, \eqref{W0s} and \eqref{boundsB}, we deduce that 
\be{tauinfty}
\lim_{s\to\infty}\Bigl(\frac{\tau(s)}{b(s)}-\frac{\gamma(s)}{b^2(s)}\Bigr)=0.
\ee
Since $\varphi(s) \approx b(s)H_2$, we expect that $\frac{b'}{b^{2}} \approx \ell:=V_\infty\int H^{3}_2 \rho.$ 

To justify this rigorously, we proceed as follows. Fix any $R>0$ and observe that,
owing to \eqref{limLprime}, \eqref{limhbeta}, \eqref{(5.13)}, \eqref{TaylorRI}, \eqref{TaylorRI3}
and \eqref{ell11}, we have 
$\tilde{m}_R(s) := \sup_{|y| \leq R} |V(0,s) - V(\varphi(y,s),s)| \to 0$ as $s \to \infty$.
We write
\begin{equation*}
\begin{aligned} 
\frac{b'}{b^{2}}(s) &-V(0,s)\int H^{3}_2 \rho- \frac{\tau(s)}{b(s)}+\frac{\gamma(s)}{b^2(s)}\\ 
&= \int (V(\varphi,s) - V(0,s))\left(\frac{\varphi}{b}\right)^{2}H_2\rho + V(0,s) \int \left( \left(\frac{\varphi}{b}\right)^{2} -H^{2}_2\right)H_2\rho \equiv T_1 + T_2.
\end{aligned} 
\end{equation*}
Let us first estimate $T_1$. Setting $\rho_1 = (1 + |y|^{2})\rho$, using the boundedness of 
$V(\varphi,s)$ (cf. \eqref{(5.14)}), Proposition~\ref{prop1}, and the Cauchy-Schwarz inequality, we get
\begin{align*} 
|T_1|
&\leq \int_{| y | \leq R} | V(0,s) - V(\varphi,s))|\left(\frac{\varphi}{b}\right)^{2}| H_2|\rho +  C\int_{| y | \geq R} \left(\frac{\varphi}{b}\right)^{2}| H_2|\rho \\
&\leq C\tilde{m}_R(s)\int_{| y | \leq R} \left(\frac{\varphi}{b}\right)^{2}\rho_1 + C\int_{| y | > R} \left| \left(\frac{\varphi}{b}\right)^{2}- H^{2}_2 \right|\rho_1 + C\int_{| y | > R} | H_2 |^{3}\rho\\
&\leq C\frac{\tilde{m}_R(s)}{b^{2}}\| \varphi \|^{2}_{H^{1}_\rho} + C\left \| \frac{\varphi}{b} - H_2 \right \|_{H^{1}_\rho} \left(\left \| \frac{\varphi}{b}\right \|_{H^{1}_\rho} + \| H_2 \|_{H^{1}_\rho}\right) + C\int_{| y | > R} | H_2 |^{3}\rho.
\end{align*}
Next, by Proposition~\ref{prop1} and the Cauchy-Schwarz inequality, we see that $| T_2 |$ is bounded by the second term in the last line of the last inequality. Letting $s\to \infty$ and using \eqref{(5.35)}, \eqref{Vinfty}, \eqref{tauinfty} we thus obtain 
$\overline{\underset{s\to \infty}{\lim}} \bigl| \frac{b'}{b^{2}}(s) -\ell \bigr| \leq C\int_{| y | > R} | H_2 |^{3}\rho$; hence $\underset{s\to \infty}{\lim} \frac{b'}{b^{2}}(s) =\ell$
by letting $R \to \infty$. After integration, we end up with $\underset{s\to \infty}{\lim}sb(s)= -\ell^{-1}$. Finally, by a straightforword calculation, using \eqref{Vinfty}, 
we see that $\ell =-4pc_2$.
\smallskip

\textit{\textbf{Step 7.} Convergence in} $L^{\infty}_{loc}.$ Going back to \eqref{(5.17)} and using \eqref{(5.18)}, we write
\begin{equation*}
\theta_s + \mathcal{L} \theta = \theta+ \tau(s)\theta + F(\varphi,s) -B(y,s)
- c^{2}_0 \int (F(\varphi,s)-B(y,s))\rho - H_2(y)\int (F(\varphi,s)-B(y,s))H_2\rho.
\end{equation*}
Fix $R_0>0$. For all $| y | \leq R_0$, owing to \eqref{(1.9)},
 \eqref{W0s}, \eqref{(5.34)} and \eqref{(5.35)}, 
we have $| \theta | = | \varphi - a(s)H_0 - b(s)H_2| \leq C(R_0)$ and
  $|\tau(s)\theta|\le C|\theta|$, as well as
\begin{equation*}
| F(\varphi,s) | \leq C(\theta + aH_0 + bH_2)^{2} \leq C\theta^{2}  + C(aH_0 + bH_2)^{2} \leq C(R_0)(|\theta | + s^{-2}).
\end{equation*}
Using Proposition~\ref{prop1}, \eqref{suppB} and Step~6, we then obtain, for all $s_2 >s_1$ 
and 
$(y,s) \in Q(s_2) := B_{R_0} \times [s_2, s_2+1]$,
\begin{equation*}| \theta_s + \mathcal{L}\theta | \leq C(R_0)(|\theta | + s_2^{-2}) + C\| \varphi(s) \|^{2}_{H^{1}_\rho}(1 + | y |^{2}) \leq C(R_0)(|\theta | + s_2^{-2}).\end{equation*}
Now fix $\eps > 0$. By Step 4, for any $s_2$ sufficiently large, we have $\| \theta\|_{L^{m}(Q(s_2))} \leq C(R_0)\eps s_2^{-1}$ with $m=2$. By interior parabolic $L^{q}$ estimates and a simple bootstrap argument on $m$, we can then show that this remains true for $m=\infty$.
\end{proof}

\section{Appendix: Known facts and auxiliary results}

 We first recall some known results (valid for not necessarily radial solutions),
which are used in the proofs of Theorems~\ref{theo} and \ref{Thm1}
(see \cite[Theorem~3.1]{souplet2022universal} 
and \cite[Theorem~5.1 and Proposition~3.2]{chabi1} for part (i), and \cite[Theorem~2.1 and Proposition~2.1]{chabi1} for part~(ii)).
We note that hypothesis~\eqref{sem2} is implied by our assumptions~\eqref{sem0}-\eqref{sem} (cf.~before \eqref{limLprime}).

\begin{thm}\label{RDT}
	Let $\Omega\subset \R^n$ be a smooth domain and $p\in(1,p_S)$.
	 Assume \eqref{sem0} and set $L(s):=\frac{f(s)}{s^p}$.
	 Let $u_0\in L^\infty(\Omega)$, with $u_0\ge 0$, be such that $T:=T_{\max} (u_0)<\infty$.
	 
	 \smallskip
	 
	(i) If 
	 $\lim_{s\to\infty} \frac{sL'(s)}{L(s)}=0$, then there exist $M_0>0$ and $t_0\in(0,T)$ such that
	 \be{type1b}
	 \|u(t)\|_\infty\le M_0\psi(t)
	 \quad\hbox{and}\quad \|\nabla u(t)\|_\infty\le M_0\frac{\psi(t)}{\sqrt{T-t}},\quad t_0<t<T.
	 \ee
	 
	(ii) If
	 \be{sem2}
\frac{sL'(s)}{L(s)}=O\bigl(\log^{-\alpha}(s)\bigr)
\hbox{ for some $\alpha>\frac12$,} \quad
\lim_{s\to\infty} \frac{s^2L''(s)}{L(s)}=0,
\ee 
 and $a\in \Omega$ is a blow-up point of $u$, then 
\be{jojo0}
	\lim_{t\to T}\frac{u(a+y\sqrt{T-t},t)}{\psi(t)}=  1,
\ee
	uniformly on compact sets $|y|\le C$. Moreover,
  \be{noneedle}
\lim_{(x,t)\to (a,T)} u(x,t)=\infty.
  \ee
\end{thm}

\smallskip

We next prove the asymptotic properties of the functions $G, H$ that we used repeatedly
(Lemmas~\ref{lm1} and~\ref{lm1b}).

\begin{proof}[Proof of Lemma~\ref{lm1}]
We may fix $s_0>0$ such that $L(s)>0$ for all $s\ge s_0$.

 \smallskip
 
(i) Let $X>s_0$. 
For all $Y>X$, integrating by parts, we obtain
\be{G-IPP}
\int_X^Y \frac{s^{-p}}{L(s)}ds
=\Bigl[\frac{-s^{1-p}}{(p-1)L(s)}\Bigr]_X^Y-\int_X^Y \frac{s^{1-p}L'(s)}{(p-1)L^2(s)}ds.
\ee
By \eqref{hyplm1}, it follows that
$$\left|\int_X^Y \frac{s^{-p}}{L(s)}ds+\Bigl[\frac{s^{1-p}}{(p-1)L(s)}\Bigr]_X^Y\right|
\le \eps(X)\int_X^Y \frac{s^{-p}}{L(s)}ds$$
where $\lim_{s\to\infty} \eps(s)=0$.
Since \eqref{hyplm1} implies 
\be{sloweta}
\frac{1}{L(s)}=o(s^\eta)\ \ \hbox{as $s\to\infty$,\ for any $\eta>0$,}
\ee
we may let $Y\to\infty$, to get
$\left|G(X)-\frac{X^{1-p}}{(p-1)L(X)}\right|
\le \eps(X)G(X)$, hence \eqref{ch}.

\smallskip
 Next setting $\tilde f(s)=\frac{f(s)}{A+\log f(s)}$ and 
$\tilde L(s)=s^{-p}\tilde f(s)=\frac{L(s)}{A+\log f(s)}=\frac{L(s)}{A+p\log s+\log L(s)}$, we compute
$$\tilde L'(s)=\frac{L'(s)}{A+\log f(s)}\Bigl(1-\frac{1}{A+\log f(s)}\Bigr)-\frac{pL(s)}{s\bigl(A+\log f(s)\bigr)^2},$$
hence 
$$\frac{s\tilde L'(s)}{\tilde L(s)}=\frac{sL'(s)}{L(s)}\Bigl(1-\frac{1}{A+\log f(s)}\Bigr)-\frac{p}{A+\log f(s)}.$$
Consequently, $\tilde L$ satisfies property \eqref{hyplm1}, so that \eqref{ch1} follows from \eqref{ch} with $\tilde L$ instead of $L$.

\smallskip

Finally, combining \eqref{ch}, \eqref{ch1} with
the fact that $|\log G(X)|\sim (p-1)\log X$ as $X\to\infty$ 
we obtain~\eqref{ch2}.

 \smallskip
 
(ii) Let $\epsilon\in (0,1/2)$. Since $L$ has slow variation at $\infty$, we easily deduce from \eqref{ch} the existence of $\eta,X_0>0$ such that
$$G((1+\eta)X)\ge(1-\epsilon)G(X)\quad\hbox{and}\quad G((1-\eta)X)\le(1+\epsilon)G(X)\quad\hbox{for all $X\ge X_0$.}$$
Since $G$ is decreasing at $\infty$, this implies property \eqref{equivGG} for $G$ and the same applies to $H$.

Likewise, \eqref{ch} implies the existence of $\bar\eta,\bar X_0>0$ such that,  for all $X\ge X_1$,
$G((1-\epsilon)X)\ge(1+\bar\eta)G(X)$ and $G((1+\epsilon)X)\le(1-\bar\eta)G(X)$.
For $Y\in(0,Y_1]$ with $Y_1>0$ sufficiently small, applying this to $X=G^{-1}(Y)\ge X_1$ 
and using the decreasing monotonicity of $G$, we obtain
$(1-\epsilon)G^{-1}(Y)\le G^{-1}((1+\bar\eta)Y)$ and $(1+\epsilon)G^{-1}(Y)\ge G^{-1}((1-\bar\eta)Y)$.
This implies property \eqref{equivGG} for $G^{-1}$ and the same applies to $H^{-1}$.

 \smallskip
 
 (iii) For all $Y>0$ sufficiently small we may set $X=G^{-1}\bigl({(p-1)Y\over p|\log Y|}\bigr)$ ($\to\infty$ as $Y\to 0^+$),
so that ${(p-1)Y\over p|\log Y|}=G(X)$.
As $Y\to 0^+$, we have $\log G(X)\sim \log Y$ so that, owing to \eqref{ch2},
    $$Y={p\over p-1}G(X)|\log Y|\sim {p\over p-1}G(X)|\log G(X)|\sim H(X).$$
    Applying property \eqref{equivGG} for $H^{-1}$, we get \eqref{dam}.
\end{proof}

\begin{proof}[Proof of Lemma~\ref{lm1b}]
Integrating twice by parts, for $X$ large enough, we may compute $G(X)$ as follows:
\begin{align*}
(p-1)G(X)&=(p-1)\int_{X}^{\infty}\frac{ds}{f(s)}
=(p-1)\int_{X}^{\infty}\frac{ds}{s^pL(s)}=\frac{X^{1-p}}{L(X)}-\int_{X}^{\infty}\frac{s^{1-p}L'(s)}{L^2(s)}ds\\
&=\frac{X^{1-p}}{L(X)}+\int_{X}^{\infty}G'(s)\frac{sL'(s)}{L(s)}ds
=\frac{X^{1-p}}{L(X)}-\frac{XL'(X)}{L(X)}G(X)
-\int_{X}^{\infty}G(s)\Bigl(\frac{sL'(s)}{L(s)}\Bigl)'ds,
\end{align*}
hence
$$\bigl((p-1)X^{p-1}L(X)+X^pL'(X)\bigr)G(X)-1=\mathcal{J}(X):=-X^{p-1}L(X)\int_{X}^{\infty}G(s)\Bigl(\frac{sL'(s)}{L(s)}\Bigl)'ds.
$$
By the second part of \eqref{sem} and \eqref{ch}, we have 
$$\mathcal{J}(X)=o\bigl(X^{p-1}L(X)\mathcal{\tilde J}(X)\bigr)\ \ \hbox{as $X\to\infty$,\quad where }
\mathcal{\tilde J}(X):=\int_{X}^{\infty}\frac{s^{-p}}{L(s)\log s}ds.$$
 To estimate $\mathcal{\tilde J}(X)$, for all $Y>X$, integrating by parts, we write
$$\int_X^Y \frac{s^{-p}}{L(s)\log s}ds
=\Bigl[\frac{-s^{1-p}}{(p-1)L(s)\log s}\Bigr]_X^Y-\int_X^Y \frac{s^{-p}}{p-1}\frac{s(L(s)\log s)'}{(L(s)\log s)^2}ds.$$
Next using $\lim_{s\to\infty} \frac{s(L(s)\log s)'}{L(s)\log s}
=\lim_{s\to\infty} \bigl(\frac{sL'(s)}{L(s)}+\frac{1}{\log s}\bigr)=0$, we get
$$\left|\int_X^Y \frac{s^{-p}}{L(s)\log s}ds+\Bigl[\frac{s^{1-p}}{(p-1)L(s)\log s}\Bigr]_X^Y\right|
\le \eps(X)\int_X^Y \frac{s^{-p}}{L(s)\log s}ds.$$
By \eqref{sloweta}, we may let $Y\to\infty$ to deduce
$\mathcal{\tilde J}(X)\sim\beta\frac{X^{1-p}}{L(X)\log X}$,
hence $\mathcal{J}(X)=o(1/\log X)$, as $X\to\infty$. This proves the lemma.
\end{proof}

We finally give the further asymptotic properties of $f, G$ and $G^{-1}$, 
which were used in the last step of the proof of the lower estimate of Theorem~\ref{theo}, and to obtain the explicit formula \eqref{asymptGinv2} for the global blow-up profile.

\begin{lem}\label{ell12a}
	Let $f$ satisfy \eqref{sem0}-\eqref{sem} and $G$ be defined by \eqref{so}.
	\smallskip
	
	(i) There exist $X_1,C_1>0$ such that, for all $\eps\in(0,1/2]$ and $X\ge X_1$,
		\be{ell12a1}
		f((1-\eps)X)\ge (1-\eps)^{p+1}f(X)
		\ee
		and
		\be{ell12a2}
	G((1-\eps)X)\le (1+C_1\eps)G(X).
	\end{equation}
	
	(ii) We have
$$
	G^{-1}(Y)\sim \kappa Y^{-\beta}L^{-\beta}\big(Y^{-\beta}\big),\quad Y\to 0^+.
$$
\end{lem}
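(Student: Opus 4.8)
The plan is to treat the two parts separately, part (ii) being the substantial one; throughout I would use only the first condition in \eqref{sem} (namely $\frac{sL'}{L}=o(\log^{-\alpha}s)$ with $\alpha>\frac12$) together with \eqref{ch} of Lemma~\ref{lm1}.

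\textbf{Part (i).} Since $f(s)=s^pL(s)$, estimate \eqref{ell12a1} is equivalent to $L((1-\eps)X)\ge(1-\eps)L(X)$, which I would derive uniformly in $\eps\in(0,\frac12]$ from the identity $\log\frac{L(X)}{L((1-\eps)X)}=\int_{(1-\eps)X}^X\frac{L'(s)}{L(s)}\,ds$ and the bound $\bigl|\frac{L'(s)}{L(s)}\bigr|=\frac1s\bigl|\frac{sL'(s)}{L(s)}\bigr|\le\frac{\eta_0}{s}$ valid on $[X_1/2,\infty)$, where $\eta_0:=\sup_{s\ge X_1/2}\bigl|\frac{sL'(s)}{L(s)}\bigr|\to0$ as $X_1\to\infty$: this gives $\frac{L((1-\eps)X)}{L(X)}\ge(1-\eps)^{\eta_0}\ge1-\eps$ once $X_1$ is large enough that $\eta_0\le1$. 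For \eqref{ell12a2}, monotonicity of $G$ gives $G((1-\eps)X)-G(X)=\int_{(1-\eps)X}^X\frac{ds}{f(s)}$; applying \eqref{ell12a1} at each point $s=(1-\eps')X$ with $\eps'\in[0,\eps]\subset[0,\frac12]$ yields $f(s)\ge 2^{-(p+1)}f(X)$, so this integral is $\le 2^{p+1}\eps\,\frac{X}{f(X)}=2^{p+1}\eps\,\frac{X^{1-p}}{L(X)}\le C_1\eps\,G(X)$ for large $X$ by \eqref{ch}, which is \eqref{ell12a2}.

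\textbf{Part (ii).} Set $Y=G(X)$, so $X=G^{-1}(Y)\to\infty$ as $Y\to0^+$. By \eqref{ch} we have $Y\sim\frac{X^{1-p}}{(p-1)L(X)}$, and solving for $X$ gives $X\sim\kappa\,Y^{-\beta}L(X)^{-\beta}$ with $\beta=\frac1{p-1}$, $\kappa=\beta^\beta$. The remaining — and essential — task is to replace $L(X)$ by $L(Y^{-\beta})$. Taking logarithms above, $\log X=-\beta\log Y-\beta\log L(X)+O(1)$; since $\log L(X)=o(\log X)$ by slow variation, $\log X\sim-\beta\log Y=\log(Y^{-\beta})$ and, more precisely,
\[ \log X-\log(Y^{-\beta})=-\beta\log L(X)+O(1). \]
To make this quantitative I would integrate $\bigl|\frac{L'(s)}{L(s)}\bigr|=\frac1s\bigl|\frac{sL'(s)}{L(s)}\bigr|=o\bigl(\frac1{s\log^\alpha s}\bigr)$ and use $\int^X\frac{ds}{s\log^\alpha s}\asymp(\log X)^{1-\alpha}$ when $\alpha\in(\frac12,1)$ (and $O(\log\log X)$, resp. $O(1)$, when $\alpha=1$, resp. $\alpha>1$), obtaining $|\log L(X)|=o\bigl((\log X)^\gamma\bigr)$ for any fixed $\gamma\in(\max(1-\alpha,0),\frac12)$, a nonempty range since $\alpha>\frac12$. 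Hence $|\log X-\log(Y^{-\beta})|=o\bigl((\log X)^\gamma\bigr)$, so with $a=\min(X,Y^{-\beta})$ and $b=\max(X,Y^{-\beta})$ (note $\log a\sim\log b\sim\log X$),
\[ \Bigl|\log\tfrac{L(X)}{L(Y^{-\beta})}\Bigr|=\Bigl|\int_a^b\tfrac{L'(s)}{L(s)}\,ds\Bigr|\le o(1)\int_a^b\tfrac{ds}{s\log^\alpha s}\le o(1)\,\tfrac{\log b-\log a}{(\log a)^\alpha}=o\bigl((\log X)^{\gamma-\alpha}\bigr)\to0, \]
because $\gamma<\frac12<\alpha$. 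Therefore $L(X)\sim L(Y^{-\beta})$, and combining with $X\sim\kappa Y^{-\beta}L(X)^{-\beta}$ yields $G^{-1}(Y)=X\sim\kappa\,Y^{-\beta}L(Y^{-\beta})^{-\beta}$, as claimed.

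\textbf{Main obstacle.} The whole difficulty is concentrated in the replacement $L(X)\sim L(Y^{-\beta})$. For a general slowly varying $L$ (only $\frac{sL'}{L}\to0$) this genuinely fails — e.g. $L(s)=\exp((\log s)^{3/4})$, for which $\log(L(X)/L(Y^{-\beta}))\to-\infty$ — and it is precisely the refined decay $\frac{sL'}{L}=o(\log^{-\alpha}s)$ with $\alpha>\frac12$ in \eqref{sem} that forces the discrepancy $\log X-\log(Y^{-\beta})$ to be of size $o((\log X)^\gamma)$ with $\gamma<\frac12$, small enough not to disturb $L$ at leading order. Everything else is routine once \eqref{ch} of Lemma~\ref{lm1} is available.
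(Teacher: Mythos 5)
Your proof is correct and follows essentially the same path as the paper's. In part (i) you derive \eqref{ell12a1} by integrating $L'/L$ to get $L((1-\eps)X)/L(X)\ge(1-\eps)^{\eta_0}\ge1-\eps$, and deduce \eqref{ell12a2} from the crude bound $f(s)\ge 2^{-(p+1)}f(X)$ on $[(1-\eps)X,X]$ combined with \eqref{ch}; the paper instead deduces \eqref{ell12a2} via the substitution $s=(1-\eps)z$, giving $G((1-\eps)X)\le(1-\eps)^{-p}G(X)$, a slightly sharper constant but the same idea. In part (ii) both arguments reduce to showing $L(X)\sim L(Y^{-\beta})$ where $X=G^{-1}(Y)$, and both exploit the same hierarchy: $|\log L(X)|=o((\log X)^{1-\alpha})$ controls $|\log X-\log Y^{-\beta}|$, which feeds back through $\int|L'/L|$ to give $|\log(L(X)/L(Y^{-\beta}))|=o((\log X)^{1-2\alpha})\to0$ precisely because $\alpha>\tfrac12$. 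The only organizational difference is that the paper routes this through the separate Lemma~\ref{ell8} (needed elsewhere in Step~6 of the proof of Theorem~\ref{Thm1}), whose statement $|L(\lambda s)/L(s)-1|\le 4|\log\lambda|/\log^\alpha s$ for $\lambda\in I_s$ is proved by a continuity/bootstrap argument; you instead fold the needed estimate into a direct bound on $\int_a^b L'/L\,ds$ with $a=\min(X,Y^{-\beta})$, $b=\max(X,Y^{-\beta})$, which is a bit shorter for this single application. Your concluding remark (with the counterexample $L(s)=\exp((\log s)^{3/4})$) identifying the role of $\alpha>\tfrac12$ as exactly what makes the $o((\log X)^{1-\alpha})$ discrepancy invisible to $L$ is a correct and useful observation, consistent with the paper's reduction ``WLOG $\tfrac12<\alpha<1$''.
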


For the proof, we need the following technical result:

\begin{lem}\label{ell8}
Under assumptions \eqref{sem0}-\eqref{sem} with $\alpha\in(0,1)$, there exists  $s_1>0$ such that, for all $s>s_1$,
\begin{equation}\label{ell11}
	\Bigl|\frac{L(\lambda s)}{L(s)}-1\Bigl| \ \le 4\frac{|\log \lambda|}{\log^\alpha{\hskip -2.5pt}s},
	\quad\hbox{ for all $\lambda \in I_s:=\bigl[\exp(-\frac18\log^\alpha{\hskip -2.5pt}s),\exp(\frac18\log^\alpha{\hskip -2.5pt}s)\bigr].$}
	\end{equation}
\end{lem}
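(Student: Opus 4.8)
The plan is to reduce \eqref{ell11} to a uniform smallness estimate for the logarithmic derivative $\zeta(\sigma):=\sigma L'(\sigma)/L(\sigma)$ over the $s$-dependent window that defines $I_s$. Since $L$ is $C^1$ (indeed $C^2$) for large argument by \eqref{sem0}--\eqref{sem}, for $s$ large and any $\lambda\in I_s$ the value $\lambda s$ stays large, so $\log L$ is differentiable between $s$ and $\lambda s$ and, after the substitution $\sigma=e^\tau$,
\[
\log\frac{L(\lambda s)}{L(s)}=\int_s^{\lambda s}\frac{L'(\sigma)}{L(\sigma)}\,d\sigma=\int_{\log s}^{\log s+\log\lambda}\zeta(e^\tau)\,d\tau .
\]
Everything then comes down to controlling $|\zeta(\sigma)|$ for $\sigma$ with $|\log\sigma-\log s|\le\frac18\log^\alpha s$, since $\log\lambda$ ranges in $[-\frac18\log^\alpha s,\frac18\log^\alpha s]$ when $\lambda\in I_s$.

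The key step is that, because $\alpha<1$, the width $\frac18\log^\alpha s$ of this window is negligible compared with $\log s$: for such $\sigma$ one has $\log\sigma\ge\log s-\frac18\log^\alpha s\ge\frac34\log s$ for $s$ large, hence $\log^\alpha\sigma\ge\frac12\log^\alpha s$. Combining this with the first assumption in \eqref{sem}, i.e. $\log^\alpha\sigma\,|\zeta(\sigma)|\to0$ as $\sigma\to\infty$, and noting that the left endpoint $s\,e^{-\frac18\log^\alpha s}$ of the window tends to $\infty$ with $s$, I obtain $s_1>0$ such that
\[
\sup_{\,|\log\sigma-\log s|\le\frac18\log^\alpha s}|\zeta(\sigma)|\le\frac{2}{\log^\alpha s},\qquad s>s_1 .
\]
Inserting this into the integral identity above gives $\bigl|\log(L(\lambda s)/L(s))\bigr|\le \frac{2}{\log^\alpha s}|\log\lambda|$, which for $\lambda\in I_s$ is at most $\frac14$.

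Finally I would exponentiate: using the elementary bound $|e^x-1|\le 2|x|$ for $|x|\le 1$ (and $\frac14\le 1$), the previous estimate yields
\[
\Bigl|\frac{L(\lambda s)}{L(s)}-1\Bigr|\le 2\,\Bigl|\log\frac{L(\lambda s)}{L(s)}\Bigr|\le \frac{4|\log\lambda|}{\log^\alpha s},\qquad s>s_1,\ \ \lambda\in I_s,
\]
which is exactly \eqref{ell11}; the auxiliary constants $C_1,c_1$ in the statement do not enter here and may be taken to be $1$. The only point requiring a little care is the uniform control of $\zeta$ over the whole interval $\{|\log\sigma-\log s|\le\frac18\log^\alpha s\}$ rather than at a single point, and this is precisely where the hypothesis $\alpha<1$ is used; I note that the second condition in \eqref{sem}, on $(sL'/L)'$, is not needed for this lemma — it is only used in Lemma~\ref{lm1b}.
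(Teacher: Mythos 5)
Your argument is correct, and it takes a genuinely different (and arguably simpler) route than the paper's. The paper proves the bound by a continuation/contradiction device: it supposes the estimate $\bigl|\tfrac{L(\lambda s)}{L(s)}-1\bigr|\le\eps+4\tfrac{|\log\lambda|}{\log^\alpha s}$ fails at some $\lambda_0$, takes the first such $\lambda_0$, and then estimates $|L(\lambda_0 s)-L(s)|/L(s)$ directly by writing it as $\int_{as}^{bs}|L'(z)|\,dz/L(s)$, controlling $L(z)/L(s)$ via the standing hypothesis on $(a,b)$, and controlling $\int_{as}^{bs}\frac{dz}{z\log^\alpha z}$ through the expansion $(1-\alpha)^{-1}\bigl|\log^{1-\alpha}(bs)-\log^{1-\alpha}(as)\bigr|\le 2\,|\log\lambda_0|/\log^\alpha s$; the numerics then produce a strict contradiction. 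You instead pass to $\log L$, write $\log\frac{L(\lambda s)}{L(s)}=\int_{\log s}^{\log s+\log\lambda}\zeta(e^\tau)\,d\tau$ with $\zeta(\sigma)=\sigma L'(\sigma)/L(\sigma)$, and bound the integrand \emph{uniformly} on the window by $2/\log^\alpha s$; this uses the same key feature ($\alpha<1$ makes the window of logarithmic width $o(\log s)$, so $\log^\alpha\sigma\ge\tfrac12\log^\alpha s$ throughout), but replaces the bootstrap argument with a one-line sup estimate, and replaces the paper's integral of $1/(z\log^\alpha z)$ by a trivial length bound. The only extra ingredient you need is the exponentiation step $|e^x-1|\le 2|x|$ on $|x|\le 1$, which is legitimate since your bound on the log is $\le 1/4$. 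Both proofs use only the first condition of \eqref{sem}, and your observation that the constants $C_1,c_1$ in the statement are vestigial matches the paper (they never appear in \eqref{ell11}).
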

\begin{proof} 
By assumptions \eqref{sem0}-\eqref{sem},  there exists $s_0>1$ such that 
\be{ell8.1}
\frac{|L'(s)|}{L(s)}\le \frac{1}{s\log^\alpha{\hskip -2.5pt}s}\quad\hbox{ for all $s\ge s_0$.}
\ee
Also, since $s\exp(-\frac18\log^\alpha{\hskip -2.5pt}s)\to \infty$ 
as $s\to\infty$, there exists $s_1>s_0$ such that $s\exp(-\frac18\log^\alpha{\hskip -2.5pt}s)>s_0$ for all $s\ge s_1$. 
It suffices to prove that, for each $\eps\in(0,\frac12)$, there holds
\be{ell8.20}
\Bigl|\frac{L(\lambda s)}{L(s)}-1\Bigl|\ \le \eps+4\frac{|\log \lambda|}{\log^\alpha{\hskip -2.5pt}s}
\quad\hbox{ for all $\lambda\in I_s$.}
\ee

Fix $s\ge s_1$ and assume for contradiction that there exists $\eps\in(0,\frac12)$ such that \eqref{ell8.20} fails.
Since the inequality in \eqref{ell8.20} is true for $\lambda$ close to $1$ by continuity of $L$, 
there must exist $\lambda_0\in I_s\setminus\{1\}$ such that
\be{ell8.2}
\Bigl|\frac{L(\lambda s)}{L(s)}-1\Bigl|\ \le \eps+4\frac{|\log \lambda|}{\log^\alpha{\hskip -2.5pt}s}\ \hbox{ for all $\lambda \in (a,b)$\quad  and }\quad
	\Bigl|\frac{L(\lambda_0 s)}{L(s)}-1\Bigl|\ =\eps+4\frac{|\log \lambda_0|}{\log^\alpha{\hskip -2.5pt}s},
	\ee
	where either $a=\lambda_0<b=1$ or $a=1<b=\lambda_0$.
		  On the other hand, using the elementary inequality $|(1+h)^{1-\alpha}-1|\le 2(1-\alpha)|h|$ for $\alpha\in(0,1)$ and $h\in(-1,1)$, we compute
		 	$$\begin{aligned}
			\int_{as}^{bs}\frac{dz}{z\log^\alpha z}
			&=(1-\alpha)^{-1}\bigl|\log^{1-\alpha}(bs)-\log^{1-\alpha}(as)\bigr|
			 =(1-\alpha)^{-1}\bigl|\bigl(\log s\pm|\log\lambda_0|\bigr)^{1-\alpha}-\log^{1-\alpha}s\bigr| \\
		 	&=(1-\alpha)^{-1}\log^{1-\alpha}s\Bigl|\Bigl(1\pm\frac{|\log\lambda_0|}{\log s}\Bigr)^{1-\alpha}-1\Bigr|
				 \le 2\frac{|\log\lambda_0|}{\log^\alpha{\hskip -2.5pt}s}.
				 				 	 	\end{aligned} $$
It then follows from \eqref{ell8.1}, \eqref{ell8.2} 
and $\lambda_0\in I_s\setminus\{1\}$ that
	$$\begin{aligned} 
	4\frac{|\log \lambda_0|}{\log^\alpha{\hskip -2.5pt}s}
	 &<\Bigl|\frac{L(\lambda_0 s)}{L(s)}-1\Bigl|\ 
	 =\frac{|L(\lambda_0 s)-L(s)|}{L(s)}
	 \le  \int_{as}^{bs} \frac{|L'(z)|}{L(s)}dz
	 	 \le  \int_{as}^{bs} \frac{L(z)}{L(s)}\frac{dz}{z\log^\alpha z}\\
		 &\le \Bigl(1+\eps+4\frac{|\log \lambda_0|}{\log^\alpha{\hskip -2.5pt}s}\Bigl)\int_{as}^{bs}
		 \frac{dz}{z\log^\alpha z}
		 		 < 2\Bigl(\frac32+4\frac{|\log \lambda_0|}{\log^\alpha{\hskip -2.5pt}s}\Bigl)\frac{|\log\lambda_0|}{\log^\alpha{\hskip -2.5pt}s}
		 \le 4\frac{|\log\lambda_0|}{\log^\alpha{\hskip -2.5pt}s}:	\end{aligned} $$
		a contradiction.
		\end{proof}

\begin{proof}[Proof of Lemma~\ref{ell12a}]
(i) By \eqref{ell11}, there exists $X_1>0$ such that, for all $\eps\in(0,1/2)$ and $X\ge X_1$, 
	$$f((1-\eps)X)=(1-\eps)^pX^pL((1-\eps)X)\ge (1-\eps)^pX^pL(X)\bigl(1-|\log(1-\eps)|\bigr)\ge (1-\eps)^{p+1}f(X),$$
hence \eqref{ell12a1}. It then follows that
	$$G((1-\eps)X)=\int_{(1-\eps)X}^\infty \frac{ ds}{f(s)}=(1-\eps)\int_{X}^\infty \frac{ dz}{f((1-\eps)z)}
	\le (1-\eps)^{-p}G(X),$$
	hence \eqref{ell12a2}.

\smallskip

(ii)
We may assume $\frac{1}{2}<\alpha<1$ without loss of generality. 
Using \eqref{ch} in Lemma~\ref{lm1} and setting $X=X(Y):=G^{-1}(Y)$, we write
\be{equivYbeta}
Y^{-\beta}=(1+\eps(X))\kappa^{-1} XL^\beta(X),\quad\hbox{with \ $\lim_{X\to\infty} \eps(X)=0$.}
\ee
 We shall apply Lemma~\ref{ell8} with $\lambda=\lambda(X):=\kappa^{-1} (1+\eps(X))L^\beta(X)$. 
For all $X\ge s_0$, with $s_0$ as in \eqref{ell8.1}, we have
	\begin{align*}
	|\log (L^\beta(X))|&\le |\log (L^\beta(s_1))|+ \beta \int^X_{s_0}\frac{|L'(s)|}{L(s)}ds
	\le C+\beta \int^X_{s_0}\frac{ds}{s\log^\alpha{\hskip -2.5pt}s}
	\le C+\frac{\beta \log^{1-\alpha} X}{1-\alpha}.
	\end{align*}
	Since $\alpha>\frac12$, we deduce that 
	$ \lim_{X\to\infty} \frac{|\log(L^\beta(X))|}{\log^\alpha X}=0$,
	hence $ \lim_{X\to\infty} \frac{|\log \lambda(X)|}{\log^\alpha X}=0$, and it follows from \eqref{ell11} that 
	$L(Y^{-\beta})=L(\lambda(X)X)\sim L(X)$ as $X\to\infty$. Going back to \eqref{equivYbeta}, we obtain 
	$G^{-1}(Y)=X\sim \kappa Y^{-\beta}L^{-\beta}\big(Y^{-\beta}\big)$ as $Y\to 0^+$, 
	which is the  desired result.
\end{proof}

\noindent{\bf Acknowledgement.} We thank the referee for careful reading	which helped us improve the presentation.

\medskip

\noindent{\bf Statements and Declarations.} 
On behalf of all authors, the corresponding author states that there is no conflict of interest. 
This manuscript has no associated data.

\end{document}